\newcommand\norm[1]{\left\lVert#1\right\rVert}
\newcommand\norm*[1]{\lVert#1\rVert}
\newcommand\set[1]{\left\{#1\right\}}
\newcommand\set*[1]{\{#1\}}
\newcommand\bra[1]{\left({#1}\right)}
\newcommand\bra*[1]{({#1})}
\newcommand\sca[1]{\left\langle{#1}\right\rangle}
\newcommand\sca*[1]{\langle{#1}\rangle}
\newcommand\abs[1]{\left\lvert#1\right\rvert}
\newcommand\abs*[1]{\lvert#1\rvert}
\newcommand{\dx}[1]{\;\de\! #1}
\newcommand{\vphi}{\varphi}
\newcommand{\ls}{\lesssim}
\newcommand{\N}{\mathbb{N}}
\newcommand{\Z}{\mathbb{Z}}
\newcommand{\R}{\mathbb{R}}
\newcommand{\one}{\mathds{1}}
\newcommand{\dH}{\dot{H}^1}
\newcommand{\dHinv}{\dot{H}^{-1}}
\newcommand{\dHs}{\dot{H}_{\sigma}^1}
\newcommand{\dHsinv}{\dot{H}_{\sigma}^{-1}}
\newcommand{\ignore}[1]{}
\DeclareMathOperator{\Id}{Id}
\DeclareMathOperator{\Div}{div}
\DeclareMathOperator{\dist}{dist}
\DeclareMathOperator{\skw}{skew}
\DeclareMathOperator{\eff}{eff}
\DeclareMathOperator{\de}{d}
\newtheorem{thm}{Theorem}[section]
\newtheorem{rem}[thm]{Remark}
\newtheorem{cor}[thm]{Corollary}
\numberwithin{equation}{section}
\crefname{hypothesis}{Hypothesis}{Hypotheses}
\title{A local version of Einstein's formula for the effective viscosity of suspensions}
\author{Barbara Niethammer\thanks{Institute for applied mathematics, University of Bonn, Endenicher Allee 60, 53115 Bonn, Germany, 
  (\email{niethammer@iam.uni-bonn.de}).}
\and Richard Schubert\thanks{Lehrstuhl 1 f\"ur Mathematik, RWTH Aachen, Pontdriesch 14-16, 52056 Aachen, Germany, 
  (\email{schubert@math1.rwth-aachen.de}).}}
\begin{document}

\maketitle

{\centering
\today \\
}

\begin{abstract}
  We prove a local variant of Einstein's formula for the effective viscosity of dilute suspensions, that is $\mu^\prime=\mu \bra{1+\frac 5 2\phi+o(\phi)}$, where $\phi$ is the volume fraction of the suspended particles. Up to now rigorous justifications have only been obtained for 
  dissipation functionals of the flow field. We prove that the formula holds on the level of the Stokes equation (with variable viscosity). 
  We consider a regime where the number $N$ of particles suspended in the fluid goes to infinity while their size $R$ and the volume fraction $\phi=NR^3$ approach zero.
  We  establish $L^\infty$ and $L^p$ estimates for the difference of the microscopic solution to the solution of the homogenized equation.
  Here we assume that the particles  are contained in a bounded region and are well separated in the sense that the minimal distance is comparable to the average one. 
  The main tools for the proof are a dipole approximation of the flow field of the suspension together with the so-called method of reflections and a coarse graining 
  of the volume density.
\end{abstract}

\begin{keywords}
 effective viscosity, Stokes equation, Einstein's formula,  method of reflections, dipole approximation
\end{keywords}

\begin{AMS}
  35Q35, 76A99, 76D07, 76D09, 76D09  
\end{AMS}

\section{Introduction}

In his annus mirabilis, 1905, Einstein published five seminal works contributing to different areas of physics. One of these works was his dissertation
"Eine neue Bestimmung der Molek\"uldimensionen"  \cite{Ein06} where he derives a formula for the effective viscosity of a dilute suspension of spheres 
\begin{equation}\label{eq:Einstein}
 \mu^\prime=\mu \Big(1+\frac 5 2\phi+o(\phi)\Big), 
\end{equation}
where $\phi$ is the (small) volume fraction of the spheres. He relates it to the formula for the mass diffusivity in order to obtain a formula for the size of the particles in the suspension. Applying this to a solution of sugar in water he is able to estimate the molecular dimensions of sugar, since both viscosity and diffusivity can be measured experimentally.

Today the formula and its validity is still of interest because dilute suspensions appear in a wide scope of applications in physics, chemistry and engineering where the effective properties of such mixtures play a role. 

The purpose of this article is to rigorously prove Einstein's formula on a local level. So instead of considering global energies of the flow field of 
the suspension we prove that the flow obeys an effective equation incorporating the (possibly non-constant) effective viscosity at {every} point in space. 
This is an improvement with regard to prior results because
it makes the time evolution of the problem accessible and it provides a proof of Einstein's formula in a non-periodic setting.
 
\subsection{Setting of the problem}\label{subsec:heuristics}

Consider a collection of rigid spherical particles $B_i^N\coloneqq B_{R^N}(X_i^N),i=1,..,N$ where $X_i^N\in \R^3$ are the particles' centres and $R^N>0$ is
the radius of all particles. Let 
\begin{equation*}
d^N_{ij}\coloneqq \abs{X^N_i-X^N_j} \;\;\text{ for }i\neq j, \quad \quad d^N\coloneqq \min_{1\le i,j\le N}d^N_{ij}>2R^N.
\end{equation*}
 This implies that the particles do not intersect nor touch each other. The domain of the suspending material is given by 
\begin{equation*}
\Omega^N=\R^3\setminus \bigcup_{i=1}^N \overline{B^N_i}.
\end{equation*} 
For an easier reading we will mostly drop the superscript $N$  in the further discussion.

We assume that  $\Omega$ is occupied by a Stokes-fluid with viscosity  $\mu=1$, that the particles are inertialess and that the fluid-solid interaction
is given by a no-slip boundary condition. This entails the following problem for the fluid velocity $u:\R^3\to \R^3$:
\begin{align}
 \Delta u+\nabla p&=f \quad \text{in }\Omega,\label{eq:s1}\\
 \Div u&=0 \quad \text{in }\Omega,\label{eq:s2}\\
 \int_{\partial B_i} \sigma n\dx{S}&=0 \quad \text{, }i=1,\dots,N,\label{eq:s3}\\
 \int_{\partial B_i} (x-X_i)\wedge(\sigma n)\dx{S}&=0 \quad \text{, }i=1,\dots,N,\label{eq:s4}\\
 u(x)&=V_i+\omega_i\wedge (x-X_i ) \quad \text{on } \overline{B_i} \;\;\text{, } i=1,\dots,N,\label{eq:s5}\\
 u(x)&\to 0 \quad \text{as }\abs{x}\to \infty,\label{eq:s6}
\end{align}
where 
\begin{align*}
 \sigma=-p\Id+2 eu, \quad eu=\frac 1 2 (\nabla u+\nabla u^T),
\end{align*}
 and the $V_i,\omega_i \in \R^3$ are a priori unknown and must be determined as part of the solution. 
 Here $p$ is the pressure which is a Lagrange multiplier associated to the divergence condition and $f$ is a force density acting on the fluid. In problem \eqref{eq:s1}-\eqref{eq:s6} one can replace $f$ by $f^N=\one_\Omega f$ where $\one_\Omega$ is the characteristic function of $\Omega$ since the equation holds only in $\Omega$. 


\subsubsection{Heuristics}
We start by a heuristic derivation of the effective flow field. To that aim we consider the solution without particles: 
\begin{align}
 -\Delta v+\nabla p&=f^N \quad \text{ in }\R^3,\label{eq:v1}\\
 \Div v&=0 \quad \text{in }\R^3,\label{eq:v2}\\
 v(x)&\to 0 \quad \text{as }\abs{x}\to \infty.\label{eq:v3}
\end{align}
Since we are in a low volume fraction regime it is reasonable (and will be proven in Theorem \ref{thm:u_v1_diff}) to assume that $u$ is already close, in terms of the volume fraction $\phi$, to $v$. In fact it is possible to prove $\norm{u-v}\ls \phi$ for a suitable norm. Then, in order to get a higher-order approximation of $u$, the main point is to satisfy the condition $u=V_i+\omega_i\wedge (x-X_i)$ on the particles. On the ball $B_i$ the function $v$, up to first order, has the form 
\begin{equation*}
v(x)=v(X_i)+\nabla v(X_i)(x-X_i)+o(R).
\end{equation*}
The linear part consists of a skew-symmetric part that induces rotations and that we want to keep, while we need to correct for the symmetric part $\epsilon_i=ev(X_i)$. 
In order to get closer to a rigid body motion, we subtract from $v$ the (dipole-)function $d_i$ that only incorporates the symmetric gradient and is defined by
\begin{align}\label{eq:di}
 d_i(x)=\begin{cases}
         \epsilon_i(x-X_i) \quad &\mbox{, for } x\in \overline{B_i},\\
         \frac 5 2 R^3\frac{(x-X_i)\bra{(x-X_i)\cdot\epsilon_i(x-X_i)}}{\abs{x-X_i}^5}\\
         +R^5\bra{\frac{\epsilon_i(x-X_i)}{\abs{x-X_i}^5}-\frac 5 2  \frac{(x-X_i)\bra{(x-X_i)\cdot\epsilon_i(x-X_i)}}{\abs{x-X_i}^7}} &\mbox{, otherwise.}
        \end{cases}
\end{align}
Then $v-d_i=v(X_i)+\omega_i \wedge (x-X_i)+o(R)$ in $B_i$, where $\omega_i$ is determined by the skew-symmetric part of the gradient. Now we want the higher-order approximation $\tilde{u}$ to be close to a rigid body motion on all the particles and set 
\begin{equation}\label{eq:tildeu}
\tilde{u}=v-\sum_{i=1}^N d_i.
\end{equation}
Of course for $i\neq j$ the dipole $d_i$ will not vanish on $B_j$ but since the decay of $d_i$ is quadratic we may hope that under some conditions on the particle distribution this effect is comparable to the one coming from higher order terms in the Taylor expansion of $v$ in $B_i$. A related approach would be to choose, instead of $\epsilon_i$, the coefficients of the dipoles in order to optimize with respect to the distance of $\tilde{u}$ to $u_N$, see for example \cite{MM10} for the scalar case, \cite{MMN16} for the vectorial case in the framework of elasticity and the review article \cite{MM18} as well as references therein. We refrain from taking this approach here because it makes the second (homogenization) step harder. Since the $d_i$ solve the homogeneous Stokes equation outside $\overline{B_i}$ the equation $-\Delta \tilde{u}+\nabla p=f^N$ is valid in $\Omega$. Note that
$d_i$ consists of two parts, one of which decays much more rapidly than the other. Hence we take into account only the first part for the following heuristics. 
Now let $\phi=RN^3$ (this is a slight abuse of notation since $\phi$ denoted the physical volume fraction before). We assume that the rescaled volume density $\rho^N=\frac 1 \phi \frac{4\pi}{3} R^3 \sum_{i=1}^N\delta_{X_i}$, where $\delta_{X_i}$ is the dirac measure supported at $X_i$, converges in some sense to $\rho$  as $N\to\infty$ so that $\phi \rho$ is the virtual limit volume density. We can write
\begin{align*}
 \tilde{u}(x)&=v(x)-\sum_{i=1}^N d_i(x)\\
 &\approx v(x)-\int_{\R^3}\frac{3}{4\pi} \phi \rho^N(y) \frac 5 2 \bra{\frac{(x-y)\bra{(x-y)\cdot ev(y)(x-y)}}{\abs{x-y}^5}}\dx{y}.
\end{align*}
Now we introduce the fundamental solution to the Stokes equation 
\begin{equation*}
\Phi_{ij}(x)=\frac{1}{8\pi}\bra{\frac{\delta_{ij}}{\abs{x}}+\frac{x_ix_j}{\abs{x}^3}}. 
\end{equation*}
We will see later (part 3 of the proof of Lemma \ref{lemma:tu_hu_diffS}) that the following identity holds for symmetric and traceless matrices $\epsilon$, where here, and in the following we use the Einstein convention to always sum over doubly appearing indices:
\begin{align*}
 \epsilon_{ki} \partial_k \Phi_{ij}(x)=-\frac{3}{8\pi}\frac{x_jx_k \epsilon_{ki}x_i}{\abs{x}^5}.
\end{align*}
Using this we arrive at the following approximation:
\begin{align*} 
 \tilde{u}_j(x)&\approx v_j(x)+\int_{\R^3}5 \phi \rho^N(y)  ev(y)_{ki}\bra{\partial_k \Phi_{ij}}(x-y)\dx{y}\\
 &\approx v_j(x)+\int_{\R^3} 5 \phi \rho(y) ev(y)_{ki}\bra{\partial_k \Phi_{ij}}(x-y)\dx{y}\\
 &= v_j(x)+\int_{\R^3} \Phi_{ij}(x-y)\Div_y\bra{5 \phi \rho ev(y)}_i\dx{y}.
\end{align*}
Notice that $\rho^N\rightharpoonup \rho$ implies that $\frac 1 \phi \one_\Omega \rightharpoonup \rho$. This yields $f^N\approx (1-\phi \rho)f$ for large $N$. Using this and taking $-\Delta$ on both sides we arrive at
\begin{align*}
 -\Delta \tilde{u} +\nabla p&=(1-\phi \rho)f+\Div\bra{5 \phi \rho ev}.
\end{align*}
We expect that $\tilde{u}$ is a better approximation of $u$ than $v$, in particular, since $\norm{u-v}\sim \phi$, we have  $\|\tilde{u}-v\| \sim \phi$. Thus, making an error of order $\phi^2$, we can replace $v$
by $\tilde u$ in the divergence term to obtain the following equation
\begin{align*}
 -\Div\bra{\nabla \tilde{u}+5\phi\rho e \tilde{u}}+\nabla \tilde{p}=(1-\phi \rho)f,\quad \Div \tilde{u}=0.
\end{align*}
We can use the fact that $\Div \tilde{u}=0$ (and hence $\Div \nabla\tilde{u}^T=0$) to write this equation as
\begin{align}\label{eq:final}
 -\Div\bra{\bra{2+5\phi\rho} e \tilde{u}}+\nabla \tilde{p}=(1-\phi \rho)f,\quad \Div \tilde{u}=0.
\end{align}
This has the form
\begin{equation*}
-\Div \sigma=(1-\phi \rho)f, \quad\text{ where }\quad \sigma=2\Big(1+\frac 5 2 \phi\rho\Big)e\tilde{u}-p\Id.
\end{equation*}
Comparing to the form of the stress tensor for a homogeneous fluid (and keeping in mind that we rescaled the viscosity) this suggests that the effective viscosity of a suspension for small volume fractions of the immersed particles in a material of viscosity $\mu$ is given by 
\begin{equation*}
\mu_{\eff}=\mu \Big(1+\frac 5 2\phi\rho\Big)
\end{equation*}
to first order in $\phi$. Note that, since $\rho$ is typically non-constant, the effective viscosity is a function of the space variable. 

In regions where the density $\rho$ is constant, the divergence acting on the part of the transposed gradient vanishes because $\Div \tilde{u}=0$. In these regions we recover Einstein's formula even for the classical form of the Stokes equation:
\begin{align*}
 -\Big(1+\frac 5 2 \phi \rho\Big)\Delta \Tilde{u}+\nabla \tilde{p}&=(1-\phi \rho)f,\\
 \Div \tilde{u}&=0.
\end{align*} 
The main result of this paper shows, informally stated, that $\frac{u-\tilde u}{\phi}$ goes to zero as $\phi \to 0$ in suitable norms. This result is part of the second author's PhD thesis \cite{Sch19}. Some standard results that are not mentioned in this article and extended proofs can be found in \cite{Sch19}.

The same computation as above can be done for the analogous but simpler electrostatical problem of perfectly conducting spheres suspended in a dielectric medium. The corresponding result is that the effective permittivity is given by $\eta_{\eff}=\eta (1+3\phi\rho)$. We also refer to \cite{Sch19} for details. 
 
\section{The main result}

In this section we state our assumptions on the particle configuration and give a precise formulation of our main result.

\subsection{Assumptions}
We set $\phi=NR^3$ and assume that the following requirements are met by the sequence of particle configurations:
\begin{align}
&\mbox{There exists $L>0$ such that } \abs{X_i}+R<L \mbox{ for all } i=1,\dots,N. \label{assump1}\\
&\mbox{There exists  } C>0 \mbox{ such that  } N^{-\frac 1 3}\le Cd. \label{assump2}\\
&\mbox{We assume that  } \phi \log N\to 0 \mbox{ as } N\to \infty. \label{assump3}  
\end{align}
Note that \eqref{assump2} and \eqref{assump3} imply that the particles are well separated in the sense that $d \ge 4R$ for large $N$. We need assumption \eqref{assump3} which is a bit stronger than the minimal assumption $\phi \to 0$ as $N\to \infty$ to account for the logarithmic divergence of certain sums and integrals at the origin.

We will make the following assumptions on $f$:
\begin{align}
 f\in L^{\frac 6 5}(\R^3) \cap L^{\infty}(\R^3)\label{assump4}\\
 f \in  C^{0,\alpha}(\R^3) \mbox{ for some } \alpha>0. \label{assump5}
 \end{align}

In order to state the result that compares the microscopic solution of problem \eqref{eq:s1}-\eqref{eq:s6} to the solutions
of certain homogenized problems, it is necessary to define a limit volume density. 
It will prove useful to introduce a coarse grained density as in \cite{NV06}.

\begin{definition}
 Let $s^N>0$ be a sequence such that $s^N\log N\to 0$ as $N\to \infty$.
  Let $\R^3$ be decomposed into half-open 
 disjoint cubes $A_j$ of side length $s^N$ where $j\in \Z^3$. 
We define the rescaled averaged particle volume density $\rho^N$ by
\begin{align}\label{eq:rhoN}
 \rho^N(x)=\frac{4\pi}{3}\frac{1}{N(s^N)^3}n(A_j) \qquad \mbox{ for } x \in A_j,
\end{align}
where $n(A_j)$ is the number of particle centers $X_i$ in $A_j$.
\end{definition}

Note that $\phi\rho^N$ is the local volume density of the particles in each cube. By assumption \eqref{assump2} this vanishes in the limit $N\to \infty$ which forces us to rescale by the volume fraction, in order to obtain a quantity proportional to the number density, that does not necessarily converge to zero. Since all particles are contained in a big ball (assumption \eqref{assump1}), $\rho^N$ will, for large $N$ be compactly supported in $B_{L+1}(0)$. By assumption \eqref{assump2} we have that  $\rho^N$ is uniformly bounded in $L^\infty$. For the following we will assume without loss of generality that 
\begin{equation}\label{assump6}
 \rho^N \rightharpoonup \rho \quad \mbox{ weakly* in } L^{\infty}(\R^3) \mbox{ and in all } L^p(\R^3), p\geq 1\,.
\end{equation}
In addition we need the regularity assumption  
\begin{equation}\label{assump7}
\rho\in W^{1,\infty}(\R^3)
\end{equation}
which is needed, because in equation \eqref{eq:final} the derivative coming from the divergence might fall onto the (non-constant) density. This produces a lower order term that can only be controlled with assumption \eqref{assump7}.

As the dipoles used for the approximation of the microscopic problem are singular, we are forced to define a domain that 
cuts out a boundary layer around the particles. Let $\delta^N>0$ such that $\frac 1 {\bra{\delta^N}^2N}\to 0$ and $\frac{\delta}{d}\to 0$ as $N\to \infty$. 
In particular $N^{-\frac 1 2}\le C \delta^N\le C^\prime N^{-\frac 1 3}$. 
Then we define $r=\max(2R,\delta^N)$ and introduce
\begin{equation}\label{eq:omegadelta}
\Omega_\delta^N=\R^3\setminus \cup_{i=1}^N B_r(X_i).
\end{equation}  

\subsection{Statement of main result}\label{Ss.statement}

For the following let $\dH$ be the closure of functions in $C^\infty_c(\R^3,\R^3)$ with respect to the $L^2$ norm of the gradient and let $\dHinv$ be its dual.
In order to incorporate the incompressibility condition we define
$ \dHs=\{w\in \dH: \Div w=0\}$ 
and denote its  dual by $\dHsinv$.

Our main result is the following local version of Einstein's formula.
\begin{thm}\label{thm:mainS} 
 The solution $\bar{u}\in \dHs$ to the equation 
 \begin{align}
  -\Div\bra{\bra{2+5 \phi \rho}e\bar{u}}+\nabla p&=(1-\phi \rho)f,\label{eq:homu}\\
  \Div \bar{u}&=0,
 \end{align}
 is close to $u$, the solution of (\ref{eq:s1})-(\ref{eq:s6}), in the following sense:
 \begin{align*}
  \frac{1}{\phi}\norm{u-\bar{u}}_{L^\infty(\Omega_\delta^N)}\to 0, N\to \infty.
 \end{align*}
  Furthermore, if $U\subset \R^3$ is of finite measure and $p\in [1,\frac 3 2]$, then
  \begin{align*}
  \frac 1 \phi \norm{u-\bar{u}}_{L^p(U)}\to 0, N\to \infty.
 \end{align*}
\end{thm}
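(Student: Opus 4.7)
The plan is to insert the intermediate dipole approximation $\tilde u \coloneqq v - \sum_{i=1}^N d_i$ built in Section \ref{subsec:heuristics} and split
\[
u - \bar u \;=\; (u - \tilde u) \;+\; (\tilde u - \bar u),
\]
estimating the two pieces by different techniques and then combining via the triangle inequality.

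For $u-\tilde u$ the key observation is that $\tilde u$ already solves $-\Delta \tilde u + \nabla p = f^N$ in $\Omega$, since every dipole $d_i$ is a Stokes solution outside $\overline{B_i}$. The difference $u - \tilde u$ is therefore Stokes-harmonic in $\Omega$ with inhomogeneity only in its boundary data on the $\partial B_i$: on each $\overline{B_i}$ the trace of $\tilde u$ differs from the rigid motion $V_i + \omega_i \wedge (\,\cdot\,-X_i)$ only by (i) the quadratic Taylor remainder of $v$ around $X_i$, of order $R^2\|D^2 v\|_\infty$, and (ii) the sum $\sum_{j\neq i} d_j$ restricted to $\overline{B_i}$, which decays as $|X_i-X_j|^{-2}$. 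The method of reflections, combined with the separation hypothesis \eqref{assump2} and the logarithmic control \eqref{assump3}, should yield $\phi^{-1}\|u-\tilde u\|_{L^\infty(\Omega_\delta^N)} \to 0$; the boundary layer $\Omega_\delta^N$ is precisely what lets one avoid the near-field singularities of the $d_i$ and upgrade an energy estimate to a pointwise one.

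For $\tilde u - \bar u$ I would proceed by homogenisation. Using the identity $\epsilon_{ki}\,\partial_k \Phi_{ij}(x) = -\tfrac{3}{8\pi}\,x_j x_k \epsilon_{ki} x_i / |x|^5$ for symmetric traceless $\epsilon$, the leading $R^3$-part of $\sum_i d_i(x)$ can be recognised as a Riemann sum approximating
\[
5\phi \int_{\R^3} \rho^N(y)\,(ev)_{ki}(y)\,(\partial_k \Phi_{ij})(x-y)\dx{y},
\]
while the faster-decaying $R^5$-part of $d_i$ contributes only $o(\phi)$ after summation over the $N\sim\phi/R^3$ particles thanks to \eqref{assump2}--\eqref{assump3}. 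Passing $\rho^N \rightharpoonup \rho$ through \eqref{assump6}, together with the parallel replacement $\phi^{-1}\one_\Omega \rightharpoonup \rho$ inside $f^N$, turns this into the statement that $\tilde u$ solves the effective equation \eqref{eq:final}, and hence \eqref{eq:homu}, modulo an $o(\phi)$ source in $\dHsinv$. Because $2+5\phi\rho$ is uniformly elliptic and $\rho\in W^{1,\infty}(\R^3)$ by \eqref{assump7}, regularity theory for the variable-coefficient Stokes system then converts this source bound to the required $L^\infty$ and $L^p$ estimates on $\tilde u - \bar u$. The extension from the $L^\infty$ bound on $\Omega_\delta^N$ to an $L^p$ bound on an arbitrary $U$ of finite measure for $p\in[1,\tfrac 3 2]$ is handled by estimating $u-\bar u$ separately on $U\setminus\Omega_\delta^N$, exploiting that $u$ is an explicit rigid motion on each $B_i$ and that $V_i,\omega_i$ are themselves close, to leading order, to $v(X_i)$ and to the skew part of $\nabla v(X_i)$.

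The main obstacle is the pointwise step. The dipoles decay only quadratically, so the reflection argument must simultaneously tame the near-field contribution of the closest neighbour of a point in $\Omega_\delta^N$ (governed by \eqref{assump2}) and the long-range contribution of all $O(N)$ particles (governed by \eqref{assump3} together with the coarse-graining scale $s^N$). Keeping the total error strictly $o(\phi)$ rather than $O(\phi)$ forces both a careful iteration of the reflections and a delicate comparison of the discrete dipole sum with its continuous counterpart against the coarse-grained density $\rho^N$, where hypothesis \eqref{assump7} is what permits control of the derivative falling onto $\rho$ when the divergence in \eqref{eq:final} is expanded.
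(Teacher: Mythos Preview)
Your overall strategy is correct and matches the paper's, but the paper refines your two-step decomposition into a four-step chain $u\to v_1\to\tilde u\to\hat u\to\bar u$, and the two extra intermediates address points your sketch leaves open. For $u-\tilde u$: the method of reflections is built on the orthogonal projections $Q_i$ onto $W_i^\perp$, and its first iterate is the \emph{abstract} correction $v_1=(\Id-\sum_iQ_i)v$, not $\tilde u$. The explicit dipole $d_i$ lies in $W_i^\perp$ but equals $Q_iv$ only up to the oscillation of $ev$ on $B_i$, so $v_1\neq\tilde u$ and the contraction argument does not apply to $\tilde u$ as stated. The paper therefore first proves $\|u-v_1\|_{L^\infty}=\phi\,o(1)$ via reflections (Theorem~\ref{thm:u_v1_diff}) and then $\|v_1-\tilde u\|=\phi\,o(1)$ separately from the dipole decay and the bound on $\nabla v(X_i)-\nabla v(x)$ (Lemmas~\ref{lemma:v1_u_diff}, \ref{lemma:Lpdiff}).

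For $\tilde u-\bar u$: the Riemann-sum comparison you describe naturally yields an equation with $5\phi\rho\,e\hat v$ (or $ev$) on the left, not $5\phi\rho\,e\bar u$; this is the intermediate $\hat u$ of \eqref{eq:interm_eqS}, and the passage from $\hat u$ to $\bar u$ is a further $O(\phi^2)$ step (Lemma~\ref{lemma:hu_bu_diff}) requiring the a~priori bound $\|\nabla(\bar u-\hat v)\|_{H^1}\le C\phi$. Moreover, an $o(\phi)$ source in $\dHsinv$ alone does not give an $L^\infty$ estimate; the paper uses that the relevant source is in $L^2$ and compactly supported in $B_{2L}(0)$ (Lemma~\ref{lemma:reg_lemma}). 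Finally, the $L^p$ statement is not obtained by treating $U\setminus\Omega_\delta^N$ via the rigid-motion values of $u$; each link of the chain is estimated in $L^p$ directly, with the restriction $p\le\tfrac32$ arising from the Korn--Poincar\'e inequality inside the balls and the near-field integrability of $|x-X_i|^{-2}$.
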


\begin{rem}\label{rem:asymp}
 Note that
\begin{equation*}
\abs{\R^3\setminus \Omega^N_\delta}=\frac{4\pi}{3}Nr^3\le C \max(2R,\delta)^3\le CNd^3 \max\Big(\frac{R}{d},\frac{\delta}{d}\Big)\to 0, N\to \infty.
\end{equation*} 
Therefore, even in $L^\infty$, the solution of the homogenized equation is close to $u$ on scale $\phi$ in a volume that is asymptotically the whole $\R^3$.

Since Einstein's formula is only asymptotic for $\phi\to 0$ and the effective viscosity at $\phi=0$ is $\mu$ there is no hope to obtain convergence to the solution of a single non-trivial limit problem. Therefore the comparison to a family of limiting problems is the right approach. Note that already the difference of $u$ to the homogeneous solution with visocsity $\mu$ is of order $\phi$. So only after dividing the difference by $\phi$ the result is meaningful.
\end{rem}

\subsection{Prior and related results}

While the number of rigorous mathematical papers regarding Einstein's formula is limited, there is a multitude of physics literature. 
As a first generalization ellipsoidal particles have been considered by Jeffery \cite{Jef22} and later by 
Hinch and Leal \cite{LH71, HL72}, while drops of  another fluid (with finite viscosity) suspended in a surrounding fluid are considered for the first time
in \cite{Tay32}.

In \cite{KRM67} the authors establish several extremum principles for the Stokes flow including fairly general boundary conditions and rigid particles. They use those principles to obtain, among other results, bounds and an asymptotic formula for the effective viscosity in the low concentration regime and for high concentrations when the particles are situated on a lattice. In the same year in \cite{FA67} another result is given for high concentrations. Numerical research can be found in \cite{NK84}, in which arbitrary concentrations are considered and also asymptotic formulas for high concentrations are obtained. \cite{BBP05} considers the case of highly concentrated suspensions and uses a so-called network approximation. 

In \cite{KRM67} the effective viscosity is obtained by comparing the dissipation rate of the suspension and of a homogenous fluid with different viscosity subject to pure strain boundary conditions. The pure strain boundary condition is imposed for a domain that becomes infinite in the limit in order to circumvent boundary effects. This disadvantage is overcome only in 2012 by Haines and Mazzucato \cite{HM12} when they rigorously prove, simultaneously bounding the power of the next order term, that
\begin{equation*}
 \abs{\mu_{\eff}-\mu\bra{1+\frac 5 2 \phi}}\le C\mu \phi^{\frac 3 2}.
\end{equation*}
They consider a fixed domain with pure strain boundary conditions with particle positions fixed to a lattice and also compare dissipation rates. 
 
 In \cite{LS85} the periodic homogenization of the Navier-Stokes equation is discussed. For the first time, the effective viscosity is not determined by an asymptotic or a dissipation functional method, but as a prefactor of the strain in the homogenized equation. In their paper the authors derive a homogenized Navier-Stokes equation up to terms of order $\phi$ that includes the term (\cite[p. 13]{LS85})
\begin{equation*}
 \Div\bra{2\bra{1+\frac 5 2 \phi \rho}eu}.
\end{equation*}
Almog and Brenner \cite{AB98} consider non-constant volume fraction and ensemble averages and obtain an effective viscosity field $\mu(x)$ which confirms 
Einstein's formula. Also here the effective viscosity appears inside the Stokes equation. They also recover the results up to $\phi^2$ with a second factor $6.95$. 
Although \cite{LS85} and \cite{AB98} derive equations with an effective viscosity and are in that respect similar to our approach, both results are not completely rigorous.

A second order correction to the viscosity is first considered by Batchelor and Green. In \cite{BG72} they calculate the second order correction to the viscosity for a random distribution of spheres to be $7.6\phi^2$ with an estimated error of the numerical factor of $10\%$ which comes from numerical and asymptotic evaluation of an, in principle, known function. \cite{AGKL12} obtain the term $\frac 5 2 \phi^2$ (the numerical value is computed wrongly in the paper despite a correct formula) and the recent \cite{GH19} confirm this result (see also below).

Shortly after the first submission of our article, several parallel contributions appeared, that together yield a substantially clearer picture of the matter than the one that existed during the writing of this article. In \cite{HW19} Einstein's formula is justified in a quantitative way and for general shapes with controlled diameter. There one can also find a hint on how to generalize our method to general shapes. The idea is to compare the dissipation energy of the shape to the one of an enclosing sphere and derive estimates from there. \cite{GH19} contains a conditional result on the second order correction and in particular they recover the term $\frac 5 2 \phi^2$ obtained in \cite{AGKL12} (apparently the paper contains a miscalculation and $\frac 5 2$ is the corrected result) in the case of periodically arranged spheres. \cite{DG19} considers the more abstract problem of a general effective viscosity tensor for finite volume fraction, i.e. they consider the limit $N\to \infty$ without simultaneously letting $\phi\to 0$. They derive, in the stationary and ergodic random setting a formula similar to the one obtained in \cite{MK06} for the corresponding electrostatic problem, at the same time providing corrector results. Finally, \cite{Ger19} gives a short and clean proof for a result that justifies the analogue of Einstein's formula in the electrostatic setting (and seems to work similarly for the Stokes problem). Let us stress here that although quantifying convergences and considering more general cases in comparison to this article, all results in \cite{GH19,HW19,Ger19,DG19} provide closeness results in low $L^p$ spaces $p\le 2$. However, in order to apply the results to the dynamic problem and to consider the evolution of the particle density it is crucial to have estimates in $L^\infty$. We provide those in the whole space for the relevant approximation (Theorem \ref{thm:u_v1_diff}) and for a domain that converges to $\R^3$ in case of the homogenized velocity field (Remark \ref{rem:asymp}).


\section{Strategy of the proof and preliminaries}

\subsection{Strategy of proof }

The proof is divided into two parts. In the first part we justify the dipole approximation while in the second part we make the heuristic computations of Subsection \ref{subsec:heuristics} rigorous. To that end we use successive approximations 
$u\to v_1\to \tilde{u}\to \hat{u}\to \bar{u}$.

In Section \ref{sec:dipole} we prove that the explicit dipole approximation $\tilde{u}$ from (\ref{eq:tildeu}) is actually close to the microscopic solution $u$.
For this we first define a related but abstract dipole approximation $v_1$ defined via projections to subspaces of $\dHs$ incorporating the 
rigid body boundary conditions on the particles (Subsection \ref{subsec:abstract_dipoles}). The so-called method of reflections then gives closeness of $v_1$ to $u$ 
(see Theorem \ref{thm:u_v1_diff}). The method of reflections for several particles was first introduced by Smoluchowski in \cite{Smo11} and is used extensively in the physics literature.
Rigorous proofs for the convergence of the method suited for the treatment of sedimenting particles are given in \cite{Luk89,JO04,HV18,Hof18}. See also \cite{MNP00,MMP02} where a reflection method is used for a single particle in a domain with boundary. We adapt here the method developed in \cite{Hof18} for dipole approximations.
Since we take into account rotations for the particles in the Stokes case, some adjustments have to be made and it is necessary to establish a Korn and a Korn-Poincar\'e inequality for balls with integrated boundary conditions. 
Using carefully obtained characterizations of the projections (Subsection \ref{subsec:characterization}) we then show that inside the particles $v_1$ and $\tilde{u}$ are already close and that, again, using the decay of the dipoles, this can be extended to the domain outside the particles (see Subsection \ref{subsec:convergence}).

Closeness of $v_1$ to $\tilde{u}$ (see Lemma \ref{lemma:v1_u_diff} and Lemma \ref{lemma:Lpdiff}) is a consequence of the fact that both the abstract and the explicit dipoles and hence also their difference are dipoles and thus have good decay properties. This is shown in Subsection \ref{subsec:explicit}. 

The second part of the proof consists in proving the closeness of $\tilde{u}$ to the solution $\bar{u}$ of the Stokes equation with Einstein viscosity (\ref{eq:homu}). This is done in Section \ref{sec:homogenization}. We use an intermediate approximation $\hat{u}$ which is the solution to the equation
\begin{equation}\label{eq:homv}
 -\Div\bra{\nabla \hat{u}+5\phi \rho ev}+\nabla p=(1-\phi \rho)f. 
\end{equation}
To prove that $\tilde{u}$ is close to $\hat{u}$ (Lemma \ref{lemma:tu_hu_diffS}) we use the fact that for every point in space the contributions of the particles in a moderately large region around this point are negligible. But further away the number density $\rho^N$ looks approximately like $\rho$ which allows passage from sum to integral. The proof relies heavily on the representation of solutions as convolutions with the fundamental solutions and involves various estimates regarding these convolution integrals. 

In order to replace $v$ by $\bar{u}$ in equation \eqref{eq:homv} we first prove that $v$ is already close to $\bar{u}$ namely that
$ \norm{v-\bar{u}}\le C \phi$.
This is achieved by standard regularity arguments and estimates of the solutions of the homogenized equation in terms of the right hand side. With the same methods it is then possible to prove that the solution to \eqref{eq:homv}, $\hat{u}$, is close to the solution of the final equation \eqref{eq:homu}, $\bar{u}$, since their difference satisfies an equation with a right hand side that is already small (Lemma \ref{lemma:hu_bu_diff}).

The strategy of first providing a suitable approximation to the microscopic solution from which the homogenized problem can be derived was already executed in several works. See for example \cite{Hof18} in the case of sedimentation or \cite{AZ17} in the case of acoustic waves.

\subsection{Notation}\label{subsec:notation}

In addition to $\dH$ and $\dHs$ introduced above in Section \ref{Ss.statement} we introduce some further notation.
Here for all  spaces we will drop the target space $\R^3$ in the notation. 
 We denote the $\dH$ pairing with $\sca{\cdot,\cdot}$
while we write $\bra{\cdot,\cdot}$ for the $L^p-L^q$ pairing where $\frac 1 p+\frac 1 q=1$. For two functions $u,v\in \dH$ this means
$ \sca{u,v}=\bra{\nabla u,\nabla v}$.
For $f\in \dHinv$ we write write $(f,\vphi)$ for $f[\vphi]$ which coincides with the classical notation if $f\in L^{\frac 6 5}(\R^3)$. 

 We denote the symmetric gradient of $w$ by $ew=\frac 1 2 \bra{\nabla w+\nabla w^T}$. Note that for functions in $\dHs$ the $L^2$ pairing of the gradients is the same up to a factor of $2$ as the $L^2$ pairing of the symmetric gradients.

Also note that there is a unique vector $\omega u(x) \in \R^3$ such that $\nabla u(x) \,y=eu(x)\,y+\omega u(x)\wedge y$ for all $y\in \R^3$. This is just a consequence of the fact that for every skewsymmetric matrix $S\in \R^{3\times 3}, S=-S^T$ there exists $\omega\in \R^3$ such that $Sy=\omega\wedge y$ for all $y\in\R^3$.

By the Riesz theorem, for any $f \in \dHsinv$ there is a $w\in \dHs$ such that 
\begin{equation}\label{eq:scaS}
\bra{f,\vphi}=\sca{w,\vphi} \mbox{ for all }\vphi\in \dHs.
\end{equation} 
By \cite[Lemma V.1.1]{Gal94} we have that if equation \eqref{eq:scaS} holds for all $\vphi\in \dHs$, then, there exists $p\in L^2(\R^3)$ such that 
\begin{equation}
  \bra{f,\vphi}=\sca{w,\vphi}+\bra{\Div \vphi,p}\mbox{ for all }\vphi\in \dH.
\end{equation}    
We then say that 
\begin{equation}
  -\Delta w+\nabla p=f,
\end{equation}
in the weak sense. The solution operator $S^{-1}:\dHsinv\to\dHs$ that maps $f$ to $w$ is an isometric isomorphism and its inverse $S$ is the so-called Stokes operator. 

The solution operator $S^{-1}$ is given by $S^{-1}f=\Phi\ast f$ where $\Phi$ is the fundamental solution of the Stokes equation, the so-called Oseen tensor
\begin{align*}
 \Phi(x)=\frac{1}{8\pi}\bra{\frac{\Id}{\abs{x}}+\frac{x\otimes x}{\abs{x}^3}}.
\end{align*}

The corresponding pressure such that $-\Delta S^{-1}f+\nabla p=f$ is given by
\[ p=\Pi\ast f \qquad \mbox{ with } \qquad  
 \Pi(x)=\frac{1}{4\pi}\frac{x}{\abs{x}^3}.
\]
Since the pressure $p$ is merely a Lagrange multiplier ensuring that the velocity field is solenoidal we will write $p$ for every appearing pressure, so that it may change between different equations but also from line to line in one computation. 

In the following universal constants $C>0$ will often appear in statements. They never depend on $N,R,d$ and $X_1,\dots,X_N$ and other $N$-dependent quantities but possibly on $f$ unless otherwise stated. When constants appear they might change their value from line to line without indication.

For all spaces we will use a $0$ as subscript to indicate that the boundary values of that function  vanish. 
Also, for any classical Sobolev spaces, the subscript $\sigma$ indicates that the weak divergence vanishes.

\subsection{Weak formulation of the problem}\label{subsec:formulation}

Let $f\in L^{\frac 6 5}(\R^3)\cap L^2(\R^3)$. A function $u$ is a weak solution of problem \eqref{eq:s1}-\eqref{eq:s6} if $u\in \dHs$ (which implies \eqref{eq:s2},\eqref{eq:s6}), if $u$ is a rigid body motion on all $\overline{B_i}$ for $i=1,..,N$ (this is \eqref{eq:s5}), if for all $\vphi \in \dH_{\sigma,0}(\Omega)$
\begin{equation}\label{eq:weakS}
 \int_{\Omega} \nabla u \cdot\nabla \vphi\dx{x}=\int_{\Omega} f\cdot \vphi \dx{x},
\end{equation}
 and if \eqref{eq:s3}, \eqref{eq:s4} are satisfied. Here $\nabla u$ and $p$ are a priori only in $L^2(\Omega)$ and \eqref{eq:s3}, \eqref{eq:s4} may seem a bit ambiguous at first glance since $\sigma$ is only in $L^2(\Omega)$ a priori. However, we have that $\Div \sigma=f^N\in L^2$ and hence $\sigma n\in H^{-\frac 1 2}(\partial B_i)$. That $\Div \sigma\in L^2(\Omega)$ is a simple consequence of \eqref{eq:weakS} and the so-called reciprocal principle (see, e.g. \cite{HB65}): For any $p\in L^2(\R^3)$ and $w\in \dHs$ we write $\sigma=2 ew-p\Id$. Then for $v,w\in \dHs$ we have 
\begin{align*}
  \int_{\R^3}\nabla w\cdot\nabla v\dx{x}=2 \int_{\R^3}ew\cdot ev\dx{x}= 2\int_{\R^3} ew\cdot \nabla v\dx{x}= \int_{\R^3} \sigma\cdot\nabla v\dx{x}.
\end{align*}
In the second step we used that the scalar product of a symmetric and a skew-symmetric matrix is zero ($\nabla v=ev+\bra{\nabla v}^{\skw}$), while in the second, we used, that $v$ is divergence-free whence $\Id\cdot \nabla v=\Div v=0$. The name reciprocal principle comes from the fact that the same equality holds for interchanged $w,v$. Note that, if $w$ satisfies \eqref{eq:s5}, then, because $ew=0$ in $B_i$ for all $i=1,\dots,N$ we can write
\begin{equation}\label{eq:recip}
  \int_{\R^3}\nabla w\cdot\nabla v\dx{x}=\int_{\Omega} \sigma\cdot\nabla v\dx{x}.
\end{equation} 
%

Existence of a weak solution to \eqref{eq:s1}-\eqref{eq:s6} follows by minimizing the energy
\begin{align}\label{eq:energy}
 E(w)=\int_{\R^3} \bra{\abs{ew}^2-f^N\cdot w} \dx{x},
\end{align}
in the space of functions that are rigid body motions inside the particles:
\begin{align}\label{eq:W}
 W\coloneqq \set{w\in \dHs: \exists\; V,\omega \in \bra{\R^3}^N\forall\; i: w(x)=V_i+\omega_i\wedge (x-X_i) \text{ on }\overline{B_i}}. 
\end{align}

\section{The dipole approximation}\label{sec:dipole}

It is useful not to start with the explicit dipole approximation from \eqref{eq:tildeu}, but to consider 
dipoles which can be characterized using a variational formulation which makes the comparison to the microscopic solution simpler. 
In order to do so we adapt the theory developed in \cite{Hof18}. With the exception of the Korn and Korn-Poincar\'e inequality in Lemma \ref{lemma:korn} and Corollary \ref{cor:korn_poincare}, which are not needed in \cite{Hof18}, all statements in Subsections \ref{subsec:abstract_dipoles}, \ref{subsec:characterization} and \ref{subsec:convergence} concerning the abstract dipoles are statements/ideas from \cite{Hof18} adapted to the situation of rigid body motions. Since the proofs are very similar we just state deviations and refer the reader to \cite{Hof18} and \cite{Sch19} for the detailed proofs.

\subsection{Approximation by abstract dipoles}\label{subsec:abstract_dipoles}
The solution $v$ of the particle-free problem \eqref{eq:v1}-\eqref{eq:v3} is given by $v=\Phi\ast f^N$ and is the minimizer of the energy $E$ from \eqref{eq:energy} in the space $\dHs$.

On the other hand, the solution $u$ to problem \eqref{eq:s1}-\eqref{eq:s6} is the minimizer of $E$ in the space $W$ (see equation \eqref{eq:W}). The fact that $u$ minimizes $E$ in $W$ means that $u$ is the orthogonal projection of $v$ from $\dHs$ to the subspace $W$. We call $P:\dHs\to W$ the orthogonal projection so that $u=Pv$. This implies that 
\begin{align*}
 \norm{v-u}_{\dH}\le \norm{v-w}_{\dH} \mbox{ for all }w\in W.
\end{align*}
By choosing a suitable function $w$ one can thus get an estimate for $\norm{u-v}_{\dH}$. This is part of the proof of Theorem \ref{thm:u_v1_diff}.

In order to control the velocity field locally we need to consider the $L^\infty$ norm, though. To get $L^\infty$ estimates it is useful to work with the method of reflections. This is due to the fact that the projection onto $W$ is not so easy to characterize. The method of reflections works with solutions of single particle problems. The single particle spaces involved, are much easier to characterize than $W$. For this we first define the particle wise version of $W$:
\begin{align*}
 W_i&=\set{w\in \dHs: w=V+\omega\wedge (x-X_i) \mbox{ on } \overline{B_i},\; V,\omega\in \R^3}.
\end{align*}
Since $W_i$ is a closed subspace of $\dHs$ there is an orthogonal projection $P_i:\dHs\to W_i$. Notice that $W=\cap_{i=1}^N W_i$. The orthogonal complement of $W_i$ has a useful characterization:

\begin{lemma}\label{lemma:perpspace}
\begin{align*}
 \bra{W_i}^\perp=&\left\{s\in \dHs: -\Delta s+\nabla p=0 \text{ in } \R^3\setminus \overline{B_i}, \; \int_{\partial B_i} \sigma[s] n\dx{S}=0,\right. \\ &\left. \int_{\partial B_i} (x-X_i)\wedge \bra{\sigma[s] n}\dx{S}=0\right\}.
\end{align*}
\end{lemma}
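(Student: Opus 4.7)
The plan is to use the reciprocal principle to rewrite the $\dHs$ inner product of $s$ and an element $w\in W_i$ as a bulk integral supported outside $B_i$ (because $ew=0$ on $\overline{B_i}$), and then integrate by parts to produce exactly the three conditions on the right-hand side of the claim. The main technical point is that the space of admissible ``test'' parameters $(V,\omega)$ inside $B_i$ is all of $\R^3\times\R^3$ and that the ``test'' extension outside $B_i$ is an essentially arbitrary function in $\dH_{\sigma,0}(\R^3\setminus\overline{B_i})$, so that the variational identity decouples into the PDE and the two flux conditions.

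First, for any $s\in\dHs$ with associated pressure $p\in L^2(\R^3)$ and $w\in W_i$, I use the identity from Section \ref{subsec:notation} that $\int_{\R^3}\nabla s\cdot\nabla w=2\int_{\R^3}es\cdot ew$, together with $\Id\cdot\nabla w=\Div w=0$, to rewrite
\begin{equation*}
\sca{s,w}=\int_{\R^3}\sigma[s]\cdot\nabla w\dx{x}=\int_{\R^3}\sigma[s]\cdot ew\dx{x}=\int_{\R^3\setminus\overline{B_i}}\sigma[s]\cdot\nabla w\dx{x},
\end{equation*}
where in the last step I use that $ew\equiv0$ on $\overline{B_i}$ because $w$ is a rigid body motion there and that $\sigma[s]$ is symmetric.

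For the inclusion $\subseteq$, I first test with $w\in\dH_{\sigma,0}(\R^3\setminus\overline{B_i})$ (extended by zero), which lies in $W_i$ (rigid motion with $V=\omega=0$); orthogonality then yields $\Div\sigma[s]=0$ in $\R^3\setminus\overline{B_i}$ in the weak sense, i.e.\ $-\Delta s+\nabla p=0$ there. This gives $\Div\sigma[s]\in L^2_{\loc}$ outside $B_i$, so $\sigma[s]n\in H^{-1/2}(\partial B_i)$ and integration by parts in the displayed identity is justified, giving
\begin{equation*}
0=\sca{s,w}=-\int_{\partial B_i}\sigma[s]n\cdot w\dx{S}=-V\cdot\int_{\partial B_i}\sigma[s]n\dx{S}-\omega\cdot\int_{\partial B_i}(x-X_i)\wedge(\sigma[s]n)\dx{S},
\end{equation*}
where I used $w=V+\omega\wedge(x-X_i)$ on $\partial B_i$ and the cyclic identity $a\cdot(b\wedge c)=b\cdot(c\wedge a)$. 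The key subordinate point is that $(V,\omega)\in\R^3\times\R^3$ can be chosen arbitrarily: given any such pair, a cutoff of $V+\omega\wedge(x-X_i)$ corrected by a compactly supported Bogovski\u{\i} potential produces an element of $W_i$ with the prescribed rigid motion on $\overline{B_i}$. Letting $(V,\omega)$ range over all of $\R^3\times\R^3$ yields the two integral conditions.

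The reverse inclusion $\supseteq$ is the same computation run backwards: given $s$ satisfying the three conditions, for any $w\in W_i$ the identity
\begin{equation*}
\sca{s,w}=\int_{\R^3\setminus\overline{B_i}}\sigma[s]\cdot\nabla w\dx{x}=-\int_{\R^3\setminus\overline{B_i}}\Div\sigma[s]\cdot w\dx{x}-\int_{\partial B_i}\sigma[s]n\cdot w\dx{S}
\end{equation*}
has both terms equal to zero, the bulk one because $-\Delta s+\nabla p=0$ outside $\overline{B_i}$ and the boundary one because $w|_{\partial B_i}$ is a rigid motion whose coefficients are paired against the vanishing force and torque. Hence $s\in(W_i)^\perp$. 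The main obstacle here is purely technical, namely justifying the boundary integration by parts with only $H^{-1/2}$ regularity of the traction; this is standard once $-\Delta s+\nabla p=0$ in $\R^3\setminus\overline{B_i}$ is known, so that $\sigma[s]n$ admits a well-defined normal trace paired with $w\in H^{1/2}(\partial B_i)$.
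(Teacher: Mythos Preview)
Your proof is correct and follows essentially the same approach as the paper: both use the reciprocal identity to reduce $\sca{s,w}$ to an integral over $\R^3\setminus\overline{B_i}$, then integrate by parts to decouple the PDE condition (via test functions in $\dH_{\sigma,0}(\R^3\setminus\overline{B_i})$) from the force and torque conditions (via arbitrary $(V,\omega)$). You are slightly more explicit than the paper about the technical points---the $H^{-1/2}$ normal trace and the Bogovski\u{\i} construction of $W_i$-elements with prescribed rigid motion---but the argument is the same.
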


\begin{proof}
Take any element $s$ of the right hand side. Then for any $w\in W_i$, using \eqref{eq:recip}, we obtain (let $w=V+\omega\wedge (x-X_i)$ on $\overline{B_i}$):
  \begin{align*}
  \sca{w,s}&=\int_{\R^3\setminus \overline{B_i}} \nabla w\cdot \sigma[s] \dx{x}=-\int_{\partial B_i} w\bra{\sigma[s]n}\dx{S}-\int_{\R^3\setminus \overline{B_i}} w \Div\sigma[s] \dx{x}\\
  &=-\int_{\partial B_i} \bra{V+\omega\wedge (x-X_i)}\bra{\sigma[s]n}\dx{S}\\
  &=-V\int_{\partial B_i} \bra{\sigma[s]n}\dx{S}-\omega \int_{\partial B_i} (x-X_i)\wedge \bra{\sigma[s]n}\dx{S}=0.
 \end{align*}
On the other hand, since the first line is zero for any $s\in \bra{W_i}^\perp$ we have $0=-\Div \sigma[s]=-\Delta s+\nabla p$ in $\R^3\setminus \overline{B_i}$ by considering $w\in \dH_{\sigma,0}(\R^3\setminus \overline{B_i})$. Then we can conclude by using $V,\omega=e_1,e_2,e_3$ in the last line.
\end{proof}

A function with the property that 
\begin{equation*} 
 0=\int_{\partial B_i} \sigma n\dx{S}=\int_{\overline{B_i}} \Div \sigma \dx{x},
\end{equation*}
is usually called a dipole since the first moment of the force distribution vanishes inside the ball.
 
We come back to our goal to approximate $u$ by $v$. We already know that, following the idea from Subsection \ref{subsec:heuristics}, it makes sense to subtract from $v$ at every ball the dipole preventing $v$ from being a rigid body motion . Let $Q_i=\Id-P_i$ be the orthogonal projection onto $W_i^\perp$. We know that $v-Q_iv=P_i v\in W_i$ is a rigid body motion on the ball $B_i$, hence $Q_iv$ is the dipole we are looking for. As explained in Subsection \ref{subsec:heuristics}, subtracting $Q_iv$ only 
helps with the boundary condition on $B_i$, so we have to subtract the dipole for all balls which gives rise to the first approximation 
$ v_1=\big(\Id -\sum_{i=1}^N Q_i\big)v$
Of course this approximation will not be constant on all balls, since the additional $Q_jv$ for $j\neq i$ will have non-vanishing contributions on $B_i$. But since the dipoles solve the homogeneous Stokes equation outside $B_i$, the function $v_1$ still satisfies the same equation as $v$ and $u$ in $\Omega$. In order to get even closer to $u$ let us repeat the process of subtracting dipoles in order to make the functions closer to rigid body motions on the balls. This leads to approximations $v_k$ given by
\begin{equation}\label{eq:vk}
v_k=\Big(\Id -\sum_{i=1}^N Q_i\Big)^kv.
 \end{equation}
The idea is that taking $k\to \infty$ one should have 
 $P=\lim_{k \to \infty}\Big(\Id -\sum_{i=1}^NQ_i\Big)^k$.
 This would imply that $v_k\to Pv=u$ as $k\to \infty$. 

 Before attempting to prove that the $v_k$ converge to $u$ we need a better understanding of the projections $P_i$ and $Q_i$ respectively.

\subsection{Characterization of $W_i^\perp$ and $P_i$}\label{subsec:characterization}

\begin{lemma}\label{lemma:projectionsS}
 For $w\in W_i^\perp$ we have 
 \begin{equation}\label{eq:meanS}
 \fint_{\partial B_i} w\dx{S}=0 \quad \text{ and }\quad  \fint_{\partial B_i} (x-X_i)\wedge w\dx{S}=0.
 \end{equation}
  Hence, for $w\in \dHs$ the projection to $W_i$ satisfies $P_iw (x)=V+\omega\wedge (x-X_i)$ for all $x\in \overline{B_i}$ with 
  \begin{equation}\label{eq:vomegaS}
 V=\fint_{\partial B_i} w\dx{S} \quad \text{ and }\quad\omega=\frac{3}{2R^2} \fint_{\partial B_i} (x-X_i)\wedge w\dx{S}.
 \end{equation}
\end{lemma}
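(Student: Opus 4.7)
The plan is to prove the mean conditions \eqref{eq:meanS} for $w\in W_i^\perp$ first, and then deduce the projection formulas \eqref{eq:vomegaS} by elementary spherical-symmetry integrals.

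For \eqref{eq:meanS}, the strategy is to probe $W_i^\perp$ by two specific test elements of $W_i$ whose boundary tractions on $\partial B_i$ isolate the two averages. Given $V,\omega\in\R^3$, let $\psi_V\in\dHs$ be the unique function equal to $V$ on $\overline{B_i}$, decaying at infinity, and solving $-\Delta\psi_V+\nabla p=0$ in $\R^3\setminus\overline{B_i}$; similarly let $\psi_\omega\in\dHs$ equal $\omega\wedge(x-X_i)$ on $\overline{B_i}$ and solve the analogous exterior problem. Both are explicitly available as the classical Stokes flows around a translating/rotating sphere, their decays ($\psi_V\sim 1/|x|$, $\psi_\omega\sim 1/|x|^2$) put them in $\dHs$, and by construction they are rigid body motions on $\overline{B_i}$, so they lie in $W_i$. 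A direct computation with these explicit expressions, or equivalently the Stokes drag/torque formulas combined with the rotational symmetry of the single-sphere problem, shows that on $\partial B_i$,
\begin{equation*}
\sigma[\psi_V] n = -\frac{3V}{2R}, \qquad \sigma[\psi_\omega] n = -\frac{3}{R^2}\bra{\omega\wedge(x-X_i)},
\end{equation*}
with $n$ the outward unit normal to $B_i$.

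For any $w\in\dHs$, integration by parts on $\R^3\setminus\overline{B_i}$ together with $\Div w=0$ and $-\Delta\psi+\nabla p=0$ outside (the same calculation that underlies the reciprocal identity \eqref{eq:recip}) yields
\begin{equation*}
\sca{\psi,w}=-\int_{\partial B_i}\sigma[\psi]n\cdot w\dx{S}.
\end{equation*}
Substituting $\psi=\psi_V$ and $\psi=\psi_\omega$ and using $\sca{\psi_V,w}=\sca{\psi_\omega,w}=0$ for $w\in W_i^\perp$, combined with the scalar triple product identity $(\omega\wedge(x-X_i))\cdot w=\omega\cdot((x-X_i)\wedge w)$, gives $V\cdot\int_{\partial B_i}w\dx{S}=0$ and $\omega\cdot\int_{\partial B_i}(x-X_i)\wedge w\dx{S}=0$ for all $V,\omega\in\R^3$, which is exactly \eqref{eq:meanS}.

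For \eqref{eq:vomegaS}, write $P_iw=V+\omega\wedge(x-X_i)$ on $\overline{B_i}$ and apply \eqref{eq:meanS} to $w-P_iw\in W_i^\perp$. Since $\fint_{\partial B_i}(x-X_i)\dx{S}=0$ by spherical symmetry, the first condition immediately yields $V=\fint_{\partial B_i}w\dx{S}$. For $\omega$, the $V$-contribution to $\fint_{\partial B_i}(x-X_i)\wedge P_iw\dx{S}$ vanishes by the same symmetry, and the vector triple product identity $(x-X_i)\wedge(\omega\wedge(x-X_i))=R^2\omega-((x-X_i)\cdot\omega)(x-X_i)$ on $\partial B_i$, combined with the spherical average $\fint_{\partial B_i}(x-X_i)_j(x-X_i)_k\dx{S}=\tfrac{R^2}{3}\delta_{jk}$, evaluates the remaining integral to $\tfrac{2R^2}{3}\omega$, giving the stated formula. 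The only substantive step is identifying the explicit tractions of $\psi_V$ and $\psi_\omega$; everything else is algebra.
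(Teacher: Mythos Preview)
Your proof is correct and follows essentially the same approach as the paper: probe $W_i^\perp$ with the explicit translating/rotating sphere flows in $W_i$, use integration by parts to reduce $\sca{\psi,w}$ to a boundary traction integral, and then deduce \eqref{eq:vomegaS} from \eqref{eq:meanS} via the spherical average $\fint_{\partial B_i}(x-X_i)\wedge(\omega\wedge(x-X_i))\dx{S}=\tfrac{2R^2}{3}\omega$. One small slip: the rotating-sphere traction is $\sigma[\psi_\omega]n=-\tfrac{3}{R}\,\omega\wedge(x-X_i)$, not $-\tfrac{3}{R^2}$ (this matches the known torque $-8\pi R^3\omega$); the constant is irrelevant for the argument, so the proof stands.
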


\begin{rem}
 Here and in the following we set
 \begin{equation*}
  \fint_{\partial B_i} w\dx{S}=\frac{1}{4\pi R^2}\int_{\partial B_i} w\dx{S}.
 \end{equation*}
\end{rem}

\begin{proof}
The idea is to use explicit test functions, for spatial motion and rotation separately, for which we know the drag on the sphere explicitly. See \cite{Hof18}, Lemma 3.10, for the first parts of \eqref{eq:meanS} and \eqref{eq:vomegaS}.

For the second part of \eqref{eq:meanS} take $\vphi$ such that $-\Delta \vphi+\nabla p=0$, $\Div \vphi=0$ in $\R^3\setminus \overline{B_i}$ and $\vphi=\omega\wedge (x-X_i)$, $\omega \in \R^3$ on $\overline{B_i}$. Then $\vphi \in W_i$ and 
\begin{equation*}
  \vphi(x)=R^3\frac{\omega\wedge (x-X_i)}{\abs{x-X_i}^3},
 \end{equation*} 
for $\abs{x-X_i}>R$. The drag on the sphere is then given by $(\sigma[\vphi] n)=-\frac{3}{R}\omega\wedge (x-X_i)$. Then 
 \begin{align*}
  0&=\sca{w,\vphi}=-\int_{\partial B_i} w\cdot \bra{\sigma[\vphi]n}\dx{S}-\int_{\R^3\setminus \overline{B_i}} w \cdot \Div\sigma[\vphi] \dx{x}\\
  &=\frac{3}{R}\int_{\partial B_i} w\cdot \bra{\omega\wedge (x-X_i)}\dx{S}=\frac{3}{R}\omega\cdot\int_{\partial B_i} (x-X_i)\wedge w\dx{S}.
 \end{align*}
 Setting $\omega=e_1,e_2,e_3$ we arrive at the second part of \eqref{eq:meanS}.
 
 Now for $w\in \dHs$ we know that $w-P_iw=Q_iw\in W_i^\perp$. On the other hand there are $V,\omega\in \R^3$ such that $P_iw(x)=V+\omega\wedge (x-X_i)$ for all $x\in\overline{B_i}$. 
 
Also, using the vector rule $A\wedge (B \wedge C)=(A\cdot C)B-(A\cdot B)C$ we have:
 \begin{align*}
  \fint_{\partial B_i} (x-X_i)\wedge \bra{\omega\wedge (x-X_i)}\dx{S}=\frac{2R^2}{3} \omega,
 \end{align*}
 and therefore
 \begin{align*}
  0&=\fint_{\partial B_i} (x-X_i)\wedge \bra{w-P_iw}\dx{S}=\fint_{\partial B_i} (x-X_i)\wedge w\dx{S}-\frac{2R^2}{3} \omega.
 \end{align*}
\end{proof}

As a consequence of this characterization we obtain a Poincar\'e inequality and a (first) Korn inequality and consequently a Korn-Poincar\'e inequality on $W_i^\perp$. 

\begin{lemma}\label{lemma:poincare}
 Let $r>0$ and $X\in \R^3$. Let $p\in [1,\infty]$ and let 
 \begin{align*}
  H^{1,p}_{X,r}\coloneqq \set{w\in W^{1,p}(B_r(X)): \int_{\partial B_r(X)} w\dx{S}=0}.
 \end{align*}
 For $p>1$ there is a constant $C>0$ that does not depend on $X$ or $r$ such that for all $w\in H^{1,p}_{X,r}$:
 \begin{align*}
  \norm{w}_{L^p(B_r(X))}\le Cr\norm{\nabla w}_{L^p(B_r(X))}
 \end{align*}
\end{lemma}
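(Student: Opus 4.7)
The plan is to reduce the statement, via translation and scaling, to the unit ball centred at the origin, and then to argue by contradiction, using the Rellich--Kondrachov compactness theorem together with the continuity of the boundary trace operator.

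First I would note that translating $w$ by $X$ does not change either the $L^p$ norms of $w$ and $\nabla w$ or the vanishing-mean condition on $\partial B_r(X)$, so we may assume $X=0$. Next, for $w\in H^{1,p}_{0,r}$, set $\tilde w(y)\coloneqq w(ry)$ for $y\in B_1(0)$. Then $\int_{\partial B_1(0)}\tilde w\dx{S}=0$ and a direct change of variables yields
\begin{equation*}
 \norm{w}_{L^p(B_r)} = r^{3/p}\norm{\tilde w}_{L^p(B_1)},\qquad
 \norm{\nabla w}_{L^p(B_r)} = r^{3/p-1}\norm{\nabla \tilde w}_{L^p(B_1)}.
\end{equation*}
Hence an inequality $\|\tilde w\|_{L^p(B_1)}\le C\|\nabla \tilde w\|_{L^p(B_1)}$ with constant $C$ independent of $\tilde w\in H^{1,p}_{0,1}$ automatically upgrades to the claimed inequality with the same $C$, independent of $X$ and $r$.

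Now I would prove the unit-ball case by contradiction. Assuming the inequality fails, there is a sequence $(w_n)\subset H^{1,p}_{0,1}$ with $\norm{w_n}_{L^p(B_1)}=1$ and $\norm{\nabla w_n}_{L^p(B_1)}\to 0$. In particular $(w_n)$ is bounded in $W^{1,p}(B_1)$, and since $p>1$ this space is reflexive, so after passing to a subsequence $w_n\rightharpoonup w$ weakly in $W^{1,p}(B_1)$ for some limit $w$. The Rellich--Kondrachov theorem gives $w_n\to w$ strongly in $L^p(B_1)$, so $\|w\|_{L^p(B_1)}=1$. The lower semicontinuity of the $L^p$ norm under weak convergence (together with $\nabla w_n\to 0$ in $L^p$) forces $\nabla w=0$, hence $w$ is a constant.

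It remains to identify this constant as zero, which is the main (but mild) obstacle: one must pass the boundary constraint to the limit. For this I would invoke the continuity of the trace operator $T\colon W^{1,p}(B_1)\to L^p(\partial B_1)$, which is bounded for $p>1$. Weak convergence $w_n\rightharpoonup w$ in $W^{1,p}(B_1)$ then implies weak convergence $Tw_n\rightharpoonup Tw$ in $L^p(\partial B_1)$. The functional $u\mapsto \int_{\partial B_1}u\dx{S}$ is continuous on $L^p(\partial B_1)$, so $0=\int_{\partial B_1}w_n\dx{S}\to\int_{\partial B_1}w\dx{S}=|\partial B_1|\,w$. Therefore $w\equiv 0$, contradicting $\|w\|_{L^p(B_1)}=1$. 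This contradiction yields a constant $C=C(p)>0$ such that $\|w\|_{L^p(B_1)}\le C\|\nabla w\|_{L^p(B_1)}$ for every $w\in H^{1,p}_{0,1}$, and the preceding scaling and translation reductions complete the proof. The restriction $p>1$ enters only through reflexivity of $W^{1,p}$; for $p=1$ an additional argument would be required, but the lemma only claims the result for $p>1$.
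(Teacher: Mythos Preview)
Your proof is correct and follows essentially the same route as the paper: reduce to the unit ball by translation and scaling, then use the standard Poincar\'e principle for closed cones on which $\nabla w=0$ forces $w=0$; you simply spell out the compactness--contradiction argument that the paper invokes as ``well known''. One small remark: your reflexivity step covers $1<p<\infty$, whereas the statement also includes $p=\infty$; there the same contradiction argument runs with Arzel\`a--Ascoli replacing Rellich--Kondrachov, so no reflexivity is needed.
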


\begin{proof}[Proof of Lemma \ref{lemma:poincare}]
 By scaling and translation it is enough to prove the inequality for $X=0$ and $r=1$. It is well known, that for closed cones in which $\nabla w=0$ implies that $w=0$, such a Poincar\'e inequality holds. 
\end{proof}

\begin{lemma}\label{lemma:korn}
 Let $r>0$ and $X\in \R^3$. Let $p\in [1,\infty]$ and let 
 \begin{align*}
  H^{1,p}_{\sigma,X,r}\coloneqq \set{w\in W^{1,p}(B_r(X)): \int_{\partial B_r(X)} \!\!\!\!w\dx{S}=\int_{\partial B_r(X)}\!\!\!\! (x-X)\wedge w\dx{S}=0}.
 \end{align*}
 For $p\in (1,\infty)$ there is a constant $C>0$ that does not depend on $X$ or $r$, such that for all $w\in H^{1,p}_{\sigma,X,r}$:
 \begin{align}\label{eq:korn}
  \norm{\nabla w}_{L^p(B_r(X))}\le C\norm{ew}_{L^p(B_r(X))}
 \end{align}
\end{lemma}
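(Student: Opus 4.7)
The plan is to reduce to a unit-ball statement by scaling and then derive the first Korn inequality with kernel-killing boundary conditions from the standard second Korn inequality by a compactness-contradiction argument. Since both $\|\nabla w\|_{L^p(B_r(X))}$ and $\|ew\|_{L^p(B_r(X))}$ scale like $r^{1-3/p}$ under $\tilde w(y)=w(X+ry)$, and the boundary integrals $\int_{\partial B_r(X)} w\,dS$ and $\int_{\partial B_r(X)}(x-X)\wedge w\,dS$ transform homogeneously so that the hypotheses are preserved, it suffices to establish \eqref{eq:korn} for $X=0$, $r=1$ with a constant $C$ depending only on $p$.

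On $B_1(0)$ I would invoke the second Korn inequality (classical for $p\in(1,\infty)$ on smooth bounded domains), namely $\|\nabla w\|_{L^p(B_1)}\le C(\|w\|_{L^p(B_1)}+\|ew\|_{L^p(B_1)})$, and combine it with the Poincar\'e inequality of Lemma \ref{lemma:poincare} (whose hypothesis is already contained in the definition of $H^{1,p}_{\sigma,0,1}$). Suppose \eqref{eq:korn} failed: then there would exist a sequence $w_n\in H^{1,p}_{\sigma,0,1}$ with $\|\nabla w_n\|_{L^p(B_1)}=1$ and $\|ew_n\|_{L^p(B_1)}\to 0$. Lemma \ref{lemma:poincare} would give $\|w_n\|_{L^p(B_1)}\le C$, hence $(w_n)$ is bounded in $W^{1,p}(B_1)$, so a subsequence converges weakly in $W^{1,p}$ and strongly in $L^p$ to some $w$. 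Weak lower semicontinuity gives $ew=0$ almost everywhere, so $w(x)=V+\omega\wedge x$ for some $V,\omega\in\R^3$. Continuity of the trace on $W^{1,p}(B_1)$ allows passage to the limit in the two boundary integrals, yielding $\int_{\partial B_1} w\,dS=0$ and $\int_{\partial B_1} x\wedge w\,dS=0$; using the symmetry identity $\int_{\partial B_1}(\omega\wedge x)\,dS=0$ and the computation $\int_{\partial B_1} x\wedge(\omega\wedge x)\,dS=\frac{8\pi}{3}\omega$ already employed in the proof of Lemma \ref{lemma:projectionsS}, one concludes $V=\omega=0$, so $w\equiv 0$.

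To close the contradiction I would apply the second Korn inequality to $w_n$ itself: since $w_n\to 0$ strongly in $L^p(B_1)$ and $ew_n\to 0$ in $L^p(B_1)$, it follows that $\|\nabla w_n\|_{L^p(B_1)}\to 0$, contradicting the normalization $\|\nabla w_n\|_{L^p(B_1)}=1$. This establishes the inequality on the unit ball and, after undoing the scaling, the general statement with a constant independent of $X$ and $r$.

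The main obstacle is of a technical/bibliographic nature: we need the second Korn inequality on the unit ball for all $p\in(1,\infty)$, which is standard but not entirely trivial (it is known to fail at the endpoints $p=1,\infty$, consistent with the hypothesis of the lemma); and we need to verify carefully that the trace-integrated boundary conditions are preserved under weak $W^{1,p}$ convergence, which reduces to continuity of the trace operator into $L^p(\partial B_1)$ and hence causes no difficulty for $p>1$. The rest of the argument is routine compactness and scaling bookkeeping.
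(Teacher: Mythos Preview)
Your proof is correct and follows essentially the same compactness--contradiction strategy as the paper: reduce to the unit ball by scaling, take a normalized sequence with $\|ew_n\|_{L^p}\to 0$, extract a weak $W^{1,p}$-limit that is a rigid motion and hence zero by the boundary conditions, and close using the second Korn inequality. The only difference is in the closure: you invoke second Korn in the form $\|\nabla w\|_{L^p}\le C(\|w\|_{L^p}+\|ew\|_{L^p})$ and conclude directly from $w_n\to 0$ in $L^p$, whereas the paper uses the variant $\|\nabla w_k-A_k\|_{L^p}\le C\|ew_k\|_{L^p}$ for suitable skew matrices $A_k$ and then argues $A_k\to 0$ --- your route is slightly shorter but the two arguments are equivalent in spirit.
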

\begin{proof}[Proof of Lemma \ref{lemma:korn}]
Again, we only need to prove the result for $X=0,r=1$, the case of general $X$ and $r$ follows by translation and rescaling. In the following we will omit the domain $B_1(0)$ in the spaces.
 
$H^{1,p}_{\sigma,0,1}$ is a closed cone. The proof of this Korn inequality is indirect and uses the same idea as the proof of general Poincar\'e inequalities on closed cones. For $w\in H^{1,p}_{\sigma,0,1}$ we have that $ew=0$ implies $w=0$. This follows from a combination of the well-known fact that $ew=0$ implies that $w$ is skew-symmetric affine and the boundary conditions in $H^{1,p}_{\sigma,0,1}$. 
 
 Now, for the sake of contradiction, assume that there is no $C>0$ such that \eqref{eq:korn} holds for all $w\in H^{1,p}_{\sigma,0,1}$. Then there is a sequence $w_k\in H^{1,p}_{\sigma,0,1}$ such that $\norm{\nabla w_k}_{L^p}\ge k \norm{ew_k}_{L^p}$. By rescaling we can arrange that $\norm{w_k}_{W^{1,p}}=1$ for all $k\in \N$. But then there is a subsequence (again denoted by $w_k$) and $w_\ast \in W^{1,p}$ such that $w_k\rightharpoonup w_\ast$ in $W^{1,p}$. Note that since $H^{1,p}_{\sigma,0,1}$ is convex we have that $w_\ast\in H^{1,p}_{\sigma,0,1}$. On the other hand we know that 
 \begin{equation*}
  \norm{ew_k}_{L^p}\le \frac{1}{k} \norm{\nabla w_k}_{L^p}\le \frac 1 k.
 \end{equation*} 
Hence $ew_k\to 0$ in $L^p$. Since at the same time $ew_k\rightharpoonup ew_\ast$ we get $ew_\ast=0$ . By our foregoing considerations this implies that $w_\ast=0$. \\
 
 By compact embedding we know that $w_k\to 0$ strongly in $L^p$. But to reach a contradiction we also need that the full gradient $\nabla w_k\to 0$ strongly, not only the symmetrized part. The key idea is, that by Korn's second inequality the gradients are already close to some constant skew-symmetric matrices for which we have strong compactness. \\
 
 Indeed, (\cite{KO88}, \S 2, Theorem 8) there exist matrices $A_k\in \R^{3\times 3}_{\skw}$ such that
 \begin{align*}
  \norm{\nabla w_k-A_k}_{L^p}\le C\norm{ew}_{L^p}\le C\frac 1 k.
 \end{align*}
 Since $\nabla w_k$ is bounded in $L^p$ this implies that the sequence $(A_k)_k$ must be bounded in $\R^{3\times 3}_{\skw}$. But then there is a subsequence (again denoted $A_k$) such that $A_k\to A_\ast$. Furthermore we can pick the subsequence in such a way that
 \begin{equation*}
  \abs{B_1(0)}^{1/p}\abs{A_k-A_\ast}\le \frac 1 k.
 \end{equation*} 
Then we have
 \begin{align*}
  \norm{\nabla w_k-A_\ast}_{L^p}\le \norm{\nabla w_k-A_k}_{L^p}+\norm{A_k-A_\ast}_{L^p}\le (C+1)\frac 1 k.
 \end{align*}
 
 Therefore, $\nabla w_k$ converges strongly to $A_\ast$ in $L^p$. But at the same time $\nabla w_k \rightharpoonup 0$ weakly in $L^p$. This yields $A_\ast=0$ and $\nabla w_k\to 0$ strongly in $L^p$ and $w_k\to 0$ strongly in $W^{1,p}$. This is a contradiction.
\end{proof}

Combining  Lemma \ref{lemma:poincare} and Lemma \ref{lemma:korn} as well as the Sobolev-embedding $W^{1,q}\hookrightarrow L^\infty$ for $q>3$ we obtain

\begin{cor}\label{cor:korn_poincare}
 Let $p\in (1,\infty]$. There is a constant $C>0$ that does not depend on $X$ or $r$ such that for all $w\in H^{1,p}_{\sigma,X,r}$:
 \begin{align*}
  \norm{w}_{L^p(B_r(X))}\le Cr\norm{ew}_{L^p(B_r(X))}.
 \end{align*}
\end{cor}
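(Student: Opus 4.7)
The plan is to split into two cases according to whether $p$ is finite or $p=\infty$.

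For $p\in(1,\infty)$, the argument is immediate: given $w\in H^{1,p}_{\sigma,X,r}$, both integral boundary conditions hold, so Lemma~\ref{lemma:korn} applies and yields $\|\nabla w\|_{L^p(B_r(X))}\le C\|ew\|_{L^p(B_r(X))}$ with a constant independent of $X$ and $r$. The first boundary condition alone already places $w$ in $H^{1,p}_{X,r}$, so Lemma~\ref{lemma:poincare} gives $\|w\|_{L^p(B_r(X))}\le Cr\|\nabla w\|_{L^p(B_r(X))}$. Composing the two estimates proves the claim for finite $p$.

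For $p=\infty$, I would bootstrap from some fixed $q\in(3,\infty)$, say $q=6$. Since $B_r(X)$ has finite Lebesgue measure, $H^{1,\infty}_{\sigma,X,r}\subset H^{1,q}_{\sigma,X,r}$, so the finite-$p$ result applies at exponent $q$; using H\"older to trade $L^q$ for $L^\infty$ on the right-hand side and Lemma~\ref{lemma:korn} for the gradient bound, one obtains
$$
\|w\|_{L^q(B_r(X))}\le Cr^{1+3/q}\|ew\|_{L^\infty(B_r(X))},\qquad \|\nabla w\|_{L^q(B_r(X))}\le Cr^{3/q}\|ew\|_{L^\infty(B_r(X))}.
$$
The Sobolev embedding $W^{1,q}(B_1(0))\hookrightarrow L^\infty(B_1(0))$, valid because $q>3$, transferred to $B_r(X)$ by the affine change of variables $y\mapsto X+ry$, reads
$$
\|w\|_{L^\infty(B_r(X))}\le C\bigl(r^{-3/q}\|w\|_{L^q(B_r(X))}+r^{1-3/q}\|\nabla w\|_{L^q(B_r(X))}\bigr).
$$
Substituting the two bounds above, the powers of $r$ collapse in both terms to $r^1$, giving $\|w\|_{L^\infty(B_r(X))}\le Cr\|ew\|_{L^\infty(B_r(X))}$ with an $(X,r)$-independent constant.

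The only subtlety is bookkeeping the scaling: one must verify that the Sobolev embedding, which is universal on the fixed reference ball $B_1(0)$, produces the correct $r$-powers after rescaling, and that these exactly cancel the $r^{3/q}$ factors from H\"older. Beyond that, there is no new analytic input; Lemmas~\ref{lemma:poincare} and~\ref{lemma:korn} are already stated uniformly in $X$ and $r$, so the corollary is a direct composition of the cited ingredients.
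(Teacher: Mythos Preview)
Your proof is correct and follows exactly the approach indicated in the paper: for finite $p$ you compose Lemma~\ref{lemma:poincare} with Lemma~\ref{lemma:korn}, and for $p=\infty$ you pass through a finite exponent $q>3$ and invoke the Sobolev embedding $W^{1,q}\hookrightarrow L^\infty$ with the correct scaling. The paper states this in one line without writing out the $r$-bookkeeping, but your detailed verification of the powers is accurate and there is nothing to add.
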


We know that elements of $w\in W_i^\perp$ solve the homogeneous Stokes equation outside $\overline{B_i}$. It is well-known that this coincides with minimizing the respective norm.

\begin{lemma}\label{lemma:minS}
 Let $s\in \dHs$ and $-\Delta s+\nabla p=0$ on $\R^3\setminus V$ for some closed $V$ with Lipschitz boundary. Then $s$ minimizes $\norm{ew}_{L^2(\R^3\setminus V)}$ among all $w\in \dHs$ with $w=s$ on $V$.
\end{lemma}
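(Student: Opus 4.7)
The plan is to show that $s$ is the orthogonal projection in the Dirichlet inner product onto the affine space $\{w \in \dHs : w = s \text{ on } V\}$. Concretely, for any admissible $w$, I will write $\vphi = w - s$ and expand
\begin{equation*}
 \norm{ew}_{L^2(\R^3\setminus V)}^2 = \norm{es + e\vphi}_{L^2(\R^3\setminus V)}^2 = \norm{es}_{L^2(\R^3\setminus V)}^2 + 2\int_{\R^3\setminus V} es\cdot e\vphi \dx{x} + \norm{e\vphi}_{L^2(\R^3\setminus V)}^2,
\end{equation*}
and show that the cross term vanishes, which immediately gives the inequality with equality iff $e\vphi = 0$ on $\R^3 \setminus V$.

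First I will note that $\vphi = w - s \in \dHs$, since both $w$ and $s$ lie in $\dHs$, and that $\vphi = 0$ on $V$. In particular $\vphi$ restricts to an element of $\dH_{\sigma,0}(\R^3\setminus V)$, so it is a legitimate test function against the homogeneous Stokes equation satisfied by $s$ outside $V$. Then, applying the reciprocal principle of Subsection \ref{subsec:formulation} (equation \eqref{eq:recip}), together with the fact that $\vphi$ is divergence-free so that $\Id \cdot \nabla \vphi = 0$, I get
\begin{equation*}
 2\int_{\R^3\setminus V} es\cdot e\vphi \dx{x} = \int_{\R^3\setminus V} \nabla s \cdot \nabla \vphi \dx{x}.
\end{equation*}
By the weak formulation of $-\Delta s + \nabla p = 0$ in $\R^3\setminus V$, tested against the divergence-free function $\vphi$ vanishing on $V$, the right-hand side is zero. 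This closes the argument.

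The mildly delicate point, rather than the algebra, is making sure that the weak formulation can indeed be applied to this test function: one needs that $\vphi \in \dH_{\sigma,0}(\R^3\setminus V)$, i.e. that the trace of $\vphi$ on $\partial V$ vanishes and that $\vphi$ is an admissible limit of compactly supported smooth divergence-free functions in $\R^3 \setminus V$. This is where the Lipschitz assumption on $\partial V$ enters (it ensures a well-defined trace and the density of smooth divergence-free functions vanishing near $\partial V$ in $\dH_{\sigma,0}(\R^3\setminus V)$; compare, e.g., \cite{Gal94}). Given these standard function-space facts, the computation above is immediate, so I expect the proof to be short and the only real content to be this orthogonality identity.
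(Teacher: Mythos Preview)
Your argument is correct and is precisely the standard orthogonality proof one would expect; the paper in fact does not supply a proof of this lemma at all, introducing it with the remark that ``it is well-known that this coincides with minimizing the respective norm.'' Your observation that the Lipschitz assumption is what guarantees $\vphi=w-s$ is an admissible test function in $\dH_{\sigma,0}(\R^3\setminus V)$ is exactly the point, and the cross-term computation then goes through as you wrote.
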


By choosing a suitable competitor we can therefore get estimates of the norm of $w\in W_i^\perp$ in terms of its values in $\overline{B_i}$. This competitor can be constructed using extension operators.

\begin{lemma}\label{lemma:extensionS}
There is a constant $C>0$ that does not depend on $X$ or $r$, and an extension operator $E_{X,r}:H^{1,2}_{\sigma,X,r}\to H_{\sigma,0}^1(B_{2r}(X))$ such that
 \begin{align*}
  \norm{\nabla E_{X,r}w}_{L^2(B_{2r}(X))}\le C \norm{ew}_{L^2(B_r(X))} \mbox{ for all }w\in H^{1,2}_{\sigma,X,r}.
 \end{align*}
\end{lemma}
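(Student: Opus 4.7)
The plan is to reduce to the normalized setting $X=0$, $r=1$ by scaling, then construct the extension by solving a Stokes problem on the annulus with the trace of $w$ as boundary data. Setting $\tilde{w}(y)=w(X+ry)$ and checking how the integral conditions, the divergence-free property and the symmetric gradient transform, one sees that $\tilde{w}\in H^{1,2}_{\sigma,0,1}$ and $\norm{e\tilde{w}}_{L^2(B_1)}^2=r^{-1}\norm{ew}_{L^2(B_r(X))}^2$. If the $r=1$ case yields a bounded operator $E_0\colon H^{1,2}_{\sigma,0,1}\to H^1_{\sigma,0}(B_2)$, defining $E_{X,r}w(x)\coloneqq (E_0\tilde{w})((x-X)/r)$ produces a divergence-free extension vanishing on $\partial B_{2r}(X)$, and a short homogeneity computation shows the resulting constant agrees with the $r=1$ constant.

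For the case $r=1$, let $A\coloneqq B_2\setminus \overline{B_1}$. Since $w\in H^{1,2}_{\sigma,0,1}$ is weakly divergence-free in $B_1$, the trace $\gamma w\in H^{1/2}(\partial B_1)$ satisfies the compatibility condition $\int_{\partial B_1}\gamma w\cdot n\dx{S}=\int_{B_1}\Div w\dx{x}=0$. Hence the stationary Stokes problem
\begin{align*}
 -\Delta v+\nabla p&=0 \quad\text{in }A,\\
 \Div v&=0 \quad\text{in }A,\\
 v&=\gamma w \quad\text{on }\partial B_1,\\
 v&=0 \quad\text{on }\partial B_2,
\end{align*}
has a unique weak solution $v\in H^1_\sigma(A)$ with zero trace on $\partial B_2$, and standard Stokes theory gives $\norm{\nabla v}_{L^2(A)}\le C\norm{\gamma w}_{H^{1/2}(\partial B_1)}$. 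I then define $E_0 w=w$ on $B_1$ and $E_0 w=v$ on $A$; matching traces on $\partial B_1$ ensures that $E_0 w\in H^1_{\sigma,0}(B_2)$.

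To close the estimate, the trace theorem yields $\norm{\gamma w}_{H^{1/2}(\partial B_1)}\le C\norm{w}_{H^1(B_1)}$, and Lemma \ref{lemma:korn} together with Corollary \ref{cor:korn_poincare} then gives $\norm{w}_{H^1(B_1)}\le C\norm{ew}_{L^2(B_1)}$; both ingredients are available precisely because the integral boundary conditions encoded in $H^{1,2}_{\sigma,0,1}$ kill the rigid-body kernel. Combining these with the trivial bound $\norm{\nabla w}_{L^2(B_1)}\le C\norm{ew}_{L^2(B_1)}$ for the interior piece gives $\norm{\nabla E_0 w}_{L^2(B_2)}\le C\norm{ew}_{L^2(B_1)}$, and undoing the rescaling produces the claimed $r$-independent estimate. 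The main subtlety is not any of the individual ingredients but rather ensuring that the constant is genuinely independent of $X$ and $r$: this is handled purely by the initial scaling reduction, which is why translation and homogeneity must be checked before the construction takes place.
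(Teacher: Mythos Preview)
Your proof is correct and follows the same overall architecture as the paper: reduce to $X=0$, $r=1$ by scaling, build a divergence-free extension with zero trace on $\partial B_2$, then upgrade the continuity bound $\norm{\nabla E_0 w}_{L^2}\le C\norm{w}_{H^1(B_1)}$ to control by $\norm{ew}_{L^2(B_1)}$ via the Poincar\'e and Korn inequalities of Lemmas~\ref{lemma:poincare} and~\ref{lemma:korn}. The one genuine difference is in how the extension itself is produced. The paper simply cites a generic continuous extension $H^1_\sigma(B_1)\to H^1_{\sigma,0}(B_2)$, obtained by taking a standard Sobolev extension and correcting the divergence with the Bogovski\u{\i} operator. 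You instead construct the extension explicitly by solving the homogeneous Stokes system on the annulus $B_2\setminus\overline{B_1}$ with inhomogeneous Dirichlet data, using $\int_{\partial B_1}w\cdot n\,\de S=0$ as the compatibility condition. Your route is slightly more self-contained (no Bogovski\u{\i} needed) and has the mild bonus that the extension is even harmonic in the Stokes sense on the annulus; the paper's route is shorter to state because it defers everything to off-the-shelf operators. Either way the final chain of inequalities is identical.
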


\begin{proof}
 We can get $E_{X,r}$ from $E_{0,1}$ by translation and scaling without changing $C$. Let $E_{0,1}:H^1_{\sigma}(B_1(0))\to H_{\sigma,0}^1(B_{2}(0))$ be a continuous extension operator. This can be constructed from the usual extension operators by using the Bogowskii-operator (see \cite{Bog80}. Then we have
 \begin{align*}
  \norm{\nabla E_{0,1}w}_{L^2(B_2(0))}&\le C \norm{w}_{H^1(B_1(0))}\le C \norm{\nabla w}_{L^2(B_1(0))}\le C \norm{ew}_{L^2(B_1(0))},
 \end{align*}
 where the second to last inequality came from Lemma \ref{lemma:poincare} while the last is the Korn inequality from Lemma \ref{lemma:korn}. 
\end{proof}

Using $E_{X_i,R}w$ as a competitor in Lemma \ref{lemma:minS} and employing Lemma \ref{lemma:extensionS} we obtain

\begin{cor}\label{cor:norm_dHs}
 There is a constant $C>0$ such that for all $w\in W_i^\perp$
 \begin{align*}
  \norm{w}_{\dH}\le C \norm{ew}_{L^2(B_i)}.
 \end{align*}
\end{cor}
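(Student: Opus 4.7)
The plan is to combine the minimization characterization from Lemma \ref{lemma:minS} with an explicit competitor built via the extension operator of Lemma \ref{lemma:extensionS}. Since $w \in W_i^\perp$, Lemma \ref{lemma:projectionsS} gives $\fint_{\partial B_i} w \, dS = 0$ and $\fint_{\partial B_i} (x - X_i) \wedge w \, dS = 0$, so $w\big|_{B_i} \in H^{1,2}_{\sigma, X_i, R}$. Moreover, by Lemma \ref{lemma:perpspace}, $w$ solves the homogeneous Stokes system on $\R^3 \setminus \overline{B_i}$, which puts us exactly in the setting of Lemma \ref{lemma:minS} with $V = \overline{B_i}$.

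The key step is to construct a divergence-free competitor $w^\ast \in \dHs$ with $w^\ast = w$ on $\overline{B_i}$ and controlled $L^2$-norm of the symmetric gradient on $\R^3 \setminus \overline{B_i}$. I would take $w^\ast$ to equal $w$ on $\overline{B_i}$, equal $E_{X_i, R} w$ on $B_{2R}(X_i) \setminus \overline{B_i}$, and equal zero on $\R^3 \setminus B_{2R}(X_i)$. Since $E_{X_i, R} w \in H^1_{\sigma, 0}(B_{2R}(X_i))$ agrees with $w$ on $B_i$ and vanishes on $\partial B_{2R}(X_i)$, the function $w^\ast$ lies in $\dHs$. By Lemma \ref{lemma:minS} applied to $s = w$,
\begin{equation*}
\norm{ew}_{L^2(\R^3 \setminus \overline{B_i})} \le \norm{ew^\ast}_{L^2(\R^3 \setminus \overline{B_i})} = \norm{e E_{X_i, R} w}_{L^2(B_{2R}(X_i) \setminus \overline{B_i})}.
\end{equation*}
Using the pointwise bound $|eu| \le |\nabla u|$ and Lemma \ref{lemma:extensionS}, the right-hand side is controlled by $C \norm{ew}_{L^2(B_i)}$.

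Finally, splitting the global $L^2$-norm gives
\begin{equation*}
\norm{ew}_{L^2(\R^3)}^2 = \norm{ew}_{L^2(B_i)}^2 + \norm{ew}_{L^2(\R^3 \setminus \overline{B_i})}^2 \le (1 + C^2)\,\norm{ew}_{L^2(B_i)}^2,
\end{equation*}
and since $w \in \dHs$ is divergence-free, a standard integration by parts yields $\norm{\nabla w}_{L^2(\R^3)}^2 = 2\norm{ew}_{L^2(\R^3)}^2$, concluding $\norm{w}_{\dH} \le C' \norm{ew}_{L^2(B_i)}$.

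The only delicate point is checking that the pasted competitor $w^\ast$ genuinely belongs to $\dHs$: this requires that the traces from inside and outside $\partial B_i$ match (guaranteed by using $E_{X_i, R} w$, which is an extension) and that global divergence-freeness holds across $\partial B_{2R}(X_i)$ (guaranteed since $E_{X_i, R} w$ has zero trace there). Everything else is a routine application of the preceding lemmas.
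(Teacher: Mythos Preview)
Your proof is correct and follows exactly the approach sketched in the paper: use the extension $E_{X_i,R}w$ as a competitor in Lemma~\ref{lemma:minS} and apply the bound from Lemma~\ref{lemma:extensionS}. You have simply spelled out the details (verifying $w|_{B_i}\in H^{1,2}_{\sigma,X_i,R}$ via Lemma~\ref{lemma:projectionsS}, checking the pasted competitor lies in $\dHs$, and using $\norm{\nabla w}_{L^2}^2 = 2\norm{ew}_{L^2}^2$ for divergence-free $w$) that the paper leaves implicit.
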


We can furthermore prove the following decay properties of the dipoles

\begin{lemma}\label{lemma:decayS}
 There is a constant $C>0$ such that for all $w\in W_i^\perp$ and for all $x\in \R^3\setminus B_{2R}(X_i)$ we have
 \begin{align}
  \abs{w(x)}&\le C \frac{R^{\frac 3 2}}{\abs{x-X_i}^2}\norm{w}_{\dH},\label{eq:decayS1}\\
  \abs{\nabla w(x)}&\le C \frac{R^{\frac 3 2}}{\abs{x-X_i}^3}\norm{w}_{\dH}.\label{eq:decayS2}
\end{align}
\end{lemma}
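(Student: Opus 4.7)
I would prove \eqref{eq:decayS1} by combining the boundary integral representation for exterior Stokes flow with the cancellation of leading multipole terms induced by the force/torque/mean conditions characterising $W_i^\perp$, and deduce \eqref{eq:decayS2} by differentiating this representation (or by interior Stokes regularity). Assume WLOG $X_i=0$. Since $w\in W_i^\perp$ solves the homogeneous Stokes system in $\R^3\setminus\overline{B_R}$ and lies in $\dHs$ (so $w\to 0$ at infinity), Green's third identity gives
\begin{equation*}
w(x)=-\int_{\partial B_R}\Phi(x-y)(\sigma[w]n)(y)\dx{S(y)}+\int_{\partial B_R}\mathcal K(x-y)w(y)\dx{S(y)}\qquad(\abs{x}>R),
\end{equation*}
with $\mathcal K$ the Stokes double-layer kernel built from $\nabla\Phi$ and the pressure Green's function, and $n$ the outer normal of $B_R$.

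For $\abs{x}\ge 2R$ and $\abs{y}=R$, I Taylor expand $\Phi(x-y)$ and $\mathcal K(x-y)$ in $y$ about $0$. The zeroth-order contributions vanish: $\Phi(x)\int_{\partial B_R}\sigma n\dx{S}=0$ by the no-net-force part of Lemma \ref{lemma:perpspace}, and $\mathcal K(x)\int_{\partial B_R}w\dx{S}=0$ by the vanishing surface mean from Lemma \ref{lemma:projectionsS}. At first order the antisymmetric parts of $\int y\otimes(\sigma n)\dx{S}$ and $\int y\otimes w\dx{S}$ drop out by no-net-torque and vanishing angular mean respectively, and the trace of the single-layer first moment contributes nothing because $\partial_j\Phi_{ij}\equiv 0$ (Stokeslet is divergence-free). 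What survives is a stresslet term $\nabla\Phi(x):M$ with $M$ the symmetric traceless part of $\int y\otimes(\sigma n)\dx{S}$, a double-layer term involving $\nabla\mathcal K(x)$ and $\int y\otimes w\dx{S}$, and Taylor remainders of strictly faster decay.

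The stresslet coefficient $M$ is controlled by a reciprocity test. For any symmetric traceless $A\in\R^{3\times 3}$, construct $\psi\in\dHs$ supported in $B_{2R}$ with $\psi(y)=Ay$ on $\overline{B_R}$ by taking $\psi_0(y)=\chi(y/R)Ay$ (with $\chi$ a smooth cutoff, $\chi\equiv 1$ on $B_1$ and $\chi\equiv 0$ off $B_2$) and correcting to divergence-free form via the Bogovski operator on the annulus $B_{2R}\setminus B_R$; the compatibility condition $\int\Div\psi_0=0$ holds automatically since $A$ is traceless, and scaling gives $\|\psi\|_{\dH}\le CR^{3/2}\abs{A}$. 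Integration by parts using $\Div\sigma[w]=0$ outside $\overline{B_R}$ then yields
\begin{equation*}
A:M=\int_{\partial B_R}(\sigma[w]n)\cdot\psi\dx{S}=\int_{B_R}\nabla w\cdot\nabla\psi\dx{x}-\sca{w,\psi},
\end{equation*}
so $\abs{A:M}\le C\|w\|_{\dH}\|\psi\|_{\dH}\le CR^{3/2}\abs{A}\|w\|_{\dH}$, whence $\abs{M}\le CR^{3/2}\|w\|_{\dH}$. Combined with $\abs{\nabla\Phi(x)}\le C\abs{x}^{-2}$ this gives the leading bound $CR^{3/2}\abs{x}^{-2}\|w\|_{\dH}$. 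The double-layer and remainder terms are subleading: using Corollary \ref{cor:norm_dHs}, Corollary \ref{cor:korn_poincare} and a standard trace inequality, both $\|\sigma n\|_{L^1(\partial B_R)}$ and $\|w\|_{L^1(\partial B_R)}$ are controlled by the appropriate powers of $R$ times $\|w\|_{\dH}$, so that after multiplication by $\abs{\nabla\mathcal K(x)}\le C\abs{x}^{-3}$ or the higher-order kernels, they are dominated by the stresslet. This proves \eqref{eq:decayS1}.

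For \eqref{eq:decayS2}, either differentiate the integral representation once in $x$ (the same cancellations apply and every derivative of $\Phi$ or $\mathcal K$ improves the decay by one power) or, once \eqref{eq:decayS1} is available, apply interior Stokes regularity on $B_{\abs{x}/4}(x)\subset\R^3\setminus\overline{B_R}$ (where $w$ is a smooth Stokes solution) to obtain $\abs{\nabla w(x)}\le C\abs{x}^{-1}\sup_{B_{\abs{x}/2}(x)}\abs{w}$ and insert \eqref{eq:decayS1} at each point. The main obstacle I anticipate is bookkeeping rather than conceptual: I must track the exact $R$-scaling through the test-flow construction (Bogovski step), the trace inequalities on $\partial B_R$ and the Taylor remainders, so that the final prefactor is exactly $R^{3/2}$ and all constants remain independent of $N$ and of the particle configuration.
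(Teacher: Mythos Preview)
Your approach is correct but takes a different route from the paper's. The paper (following \cite{Hof18}) writes $w=\Phi\ast Sw$ with $Sw\in\dHsinv$ supported in $\overline{B_i}$ (since $w$ solves homogeneous Stokes outside), observes that the dipole condition forces $Sw$ to have vanishing zeroth moment, and then subtracts $\Phi(x-X_i)$ inside the pairing to gain the extra power of decay; the prefactor $R^{3/2}$ emerges from estimating $\Phi(x-\cdot)-\Phi(x-X_i)$ in $\dH(B_i)$ and using $\norm{Sw}_{\dHsinv}=\norm{w}_{\dH}$. You instead use the classical exterior boundary integral representation, Taylor expand both layer kernels, and kill the low-order terms using all four moment conditions from Lemmas \ref{lemma:perpspace} and \ref{lemma:projectionsS}, then bound the surviving stresslet coefficient by a reciprocity argument with an explicit compactly supported test flow.

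The paper's route is shorter and stays entirely in the $\dH$--$\dHinv$ duality, so it never needs to make sense of boundary traces of the stress; your route is more concrete and makes the multipole structure (in particular, that the leading surviving term is precisely the stresslet) completely transparent, at the cost of more bookkeeping. One small imprecision: a priori $\sigma[w]n$ is only in $H^{-1/2}(\partial B_R)$, not $L^1$, since $w$ is merely $H^1$ up to the boundary from the exterior side; this does not affect your argument, because the moment pairings $\int_{\partial B_R} y^{\otimes k}\otimes(\sigma n)\dx{S}$ and the Green representation are all well-defined as $H^{-1/2}$--$H^{1/2}$ dualities and obey the same scaling, and your key stresslet bound is obtained via the test function $\psi$ without invoking $L^1$ control anyway.
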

\begin{proof}
 The main idea of the proof is to use that the convolution of the fundamental solution is the inverse of the application of the Stokes operator. Therefore $w$ can be represented as the convolution of $Sw$ and the fundamental solution. For any function $w$ with compactly supported $Sw$ this gives a $\frac{1}{\abs{x}}$-like decay. The fact that $Sw$ integrates to zero due to the dipole property gives rise to the additional power in the decay. For details see Lemma 3.11 in \cite{Hof18}.
\end{proof}

\subsection{Convergence of the method of reflections}\label{subsec:convergence}

The ultimate goal of the method of reflections is, to prove that the symmetrized gradients of the approximations $v_k$ approach $0$ in $L^\infty$ inside the particles. If $w\in \dH$ has an uniformly bounded gradient in $B_i$ we can get the following estimates 

\begin{cor}\label{cor:decayS}
 There is a constant $C>0$ such that for all $w\in \dHs \cap W^{1,\infty}(B_i)$ and for all $x\in \R^3\setminus B_{2R}(X_i)$ we have 
 \begin{align}
  \abs{Q_i w(x)}\le C \frac{R^3}{\abs{x-X_i}^2}\norm{ew}_{L^\infty(B_i)}, \label{eq:decay_inftyS1}\\
  \abs{\nabla Q_i w(x)}\le C \frac{R^3}{\abs{x-X_i}^3}\norm{ew}_{L^\infty(B_i)}. \label{eq:decay_inftyS2}
 \end{align}
\end{cor}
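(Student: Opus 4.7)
The plan is to combine Lemma \ref{lemma:decayS} (applied to the dipole $Q_i w \in W_i^\perp$) with the estimate on $\|Q_i w\|_{\dH}$ from Corollary \ref{cor:norm_dHs}, and then bound the resulting $L^2$ quantity by the $L^\infty$ norm using the volume of $B_i$.

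Concretely, since $Q_i w \in W_i^\perp$, Lemma \ref{lemma:decayS} immediately yields, for $x \in \R^3 \setminus B_{2R}(X_i)$,
\begin{align*}
 \abs{Q_i w(x)} &\le C \frac{R^{3/2}}{\abs{x-X_i}^2}\norm{Q_i w}_{\dH}, \\
 \abs{\nabla Q_i w(x)} &\le C \frac{R^{3/2}}{\abs{x-X_i}^3}\norm{Q_i w}_{\dH}.
\end{align*}
So it remains to upgrade $\norm{Q_i w}_{\dH}$ to the claimed factor $R^{3/2}\norm{ew}_{L^\infty(B_i)}$, which will combine with the existing $R^{3/2}$ to give the $R^3$ in the statement.

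For this, I apply Corollary \ref{cor:norm_dHs} to $Q_i w \in W_i^\perp$ to get $\norm{Q_i w}_{\dH} \le C\norm{e(Q_i w)}_{L^2(B_i)}$. The key observation is that $P_i w$ is a rigid body motion on $\overline{B_i}$ (by definition of $W_i$), so $e(P_i w) = 0$ on $B_i$, and hence $e(Q_i w) = e(w - P_i w) = ew$ pointwise on $B_i$. Thus
\begin{equation*}
 \norm{e(Q_i w)}_{L^2(B_i)} = \norm{ew}_{L^2(B_i)} \le \abs{B_i}^{1/2}\norm{ew}_{L^\infty(B_i)} \le CR^{3/2}\norm{ew}_{L^\infty(B_i)}.
\end{equation*}
Inserting this into the two decay estimates above gives exactly \eqref{eq:decay_inftyS1} and \eqref{eq:decay_inftyS2}.

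There is no real obstacle here; the entire content of the corollary is the identification $e(Q_i w) = ew$ on $B_i$, which is immediate from the fact that symmetric gradients annihilate rigid body motions. Everything else is a direct invocation of Lemma \ref{lemma:decayS} and Corollary \ref{cor:norm_dHs}, with the trivial $L^2$--$L^\infty$ bound on the ball $B_i$ of radius $R$ converting the $L^2$ norm of $ew$ into an $L^\infty$ norm at the cost of the volume factor $R^{3/2}$.
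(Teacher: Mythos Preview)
Your proof is correct and follows essentially the same approach as the paper: the paper also observes that $eQ_iw = ew$ on $B_i$ (since $P_iw$ is a rigid body motion there) and then combines Corollary~\ref{cor:norm_dHs} with Lemma~\ref{lemma:decayS}. You simply spell out the trivial $L^2$--$L^\infty$ volume factor $R^{3/2}$ that the paper leaves implicit.
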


\begin{proof}
 We know that $Q_iw=w-P_i w$ and since $P_i w$ is a rigid body motion on $B_i$ we have $eP_iw=0$ in $B_i$ and hence $eQ_iw=ew$ in $B_i$. Using that $Q_iw\in W_i^\perp$,  and combining Corollary \ref{cor:norm_dHs} with Lemma \ref{lemma:decayS} yields \eqref{eq:decay_inftyS1} and \eqref{eq:decay_inftyS2}.
\end{proof}

In order to prove the main approximation statement of this section we need some estimates for recurring sums (also see Lemma 2.1 of \cite{JO04} for the first two inequalities):

\begin{lemma}\label{lemma:sum}
There exists $C>0$ such that for any $x \in \R^3$ with $1\le i\le N$ such that $\abs{x-X_i}\le \abs{x-X_j}$ for all $1\le j\le N$ it holds that 
 \begin{align}
  \sum_{j\neq i}\frac{1}{\abs{x-X_j}}\le C \frac{N^{\frac 2 3}}{d}\le C N,\label{eq:sum4}\\
  \sum_{j\neq i}\frac{1}{\abs{x-X_j}^2}\le C \frac{N^{\frac 1 3}}{d^2} \le C N,\label{eq:sum5}\\
  \sum_{j\neq i}\frac{1}{\abs{x-X_j}^3}\le C \frac{\log N}{d^3}\le C N\log N,\label{eq:sum6}\\
  \sum_{j\neq i}\frac{1}{\abs{x-X_j}^4}\le C \frac{1}{d^4}\le C N^{\frac 4 3}.\label{eq:sum7}
 \end{align}
\end{lemma}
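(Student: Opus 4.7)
The strategy is to compare each sum to an integral, exploiting the fact that the balls $B_{d/2}(X_j)$ are pairwise disjoint and, by \eqref{assump1}, contained in $B_{L+d/2}(0)$, and that the minimality of $\abs{x-X_i}$ forces all the other centers $X_j$ to lie at distance at least $d/2$ from $x$. Indeed, $\abs{x-X_i}\le\abs{x-X_j}$ combined with $\abs{X_i-X_j}\ge d$ gives, via the triangle inequality, $\abs{x-X_j}\ge \tfrac12\abs{X_i-X_j}\ge d/2$ for every $j\ne i$.

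I then split into a near part $A=\set{j\neq i : \abs{x-X_j}\le 3d/2}$ and a far part $B$. Since only boundedly many $d$-separated points fit into $B_{3d/2}(x)$, a packing argument gives $\abs{A}\le C$ and therefore $\sum_{j\in A}\abs{x-X_j}^{-\alpha}\le C d^{-\alpha}$ for each $\alpha\in\set{1,2,3,4}$. For $j\in B$ and $y\in B_{d/2}(X_j)$, the reverse triangle inequality yields $\abs{y-x}\ge \tfrac23\abs{x-X_j}$, so $\abs{x-X_j}^{-\alpha}\ls \fint_{B_{d/2}(X_j)}\abs{y-x}^{-\alpha}\dx{y}$, and disjointness together with $\bigcup_{j\in B}B_{d/2}(X_j)\subset B_{L+d/2}(0)\setminus B_d(x)$ yields
\begin{equation*}
\sum_{j\in B}\frac{1}{\abs{x-X_j}^\alpha}\ls \frac{1}{d^3}\int_{B_{L+d/2}(0)\setminus B_d(x)}\frac{\dx{y}}{\abs{y-x}^\alpha}\ls \frac{1}{d^3}\int_d^{CL}\rho^{2-\alpha}\dx{\rho}.
\end{equation*}
Evaluating the one-dimensional integral bounds the far sum, up to $L$-dependent constants, by $d^{-3}$ for $\alpha=1,2$, by $d^{-3}\log(L/d)$ for $\alpha=3$, and by $d^{-4}$ for $\alpha=4$. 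The logarithmic factor at $\alpha=3$ is precisely the source of the $\log N$ in the third estimate of the lemma.

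Finally, by \eqref{assump2} one has $1/d\ls N^{1/3}$, and together with the fact that $L$ is a fixed constant the four bounds translate to $N^{2/3}/d$, $N^{1/3}/d^2$, $\log N/d^3$, and $1/d^4$ respectively; the trailing bounds $\ls N$, $\ls N$, $\ls N\log N$, $\ls N^{4/3}$ then follow from one further application of $1/d\ls N^{1/3}$. I do not expect a serious obstacle: the only mild subtlety is the handling of the few near particles in $A$, and the packing argument disposes of these uniformly.
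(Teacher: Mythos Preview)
Your argument is correct and complete; the near/far split, the packing bound $\abs{A}\le C$, and the averaging trick $\abs{x-X_j}^{-\alpha}\ls\fint_{B_{d/2}(X_j)}\abs{y-x}^{-\alpha}\dx{y}$ are all sound, and the one-dimensional integrals together with $1/d\ls N^{1/3}$ indeed reproduce the four stated bounds.

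The paper proceeds differently. It reduces to $x=X_i$ (via $\abs{x-X_j}\ge\tfrac12\abs{X_i-X_j}$, which you also observed), translates so that $X_i=0$, then \emph{orders} the remaining centres by distance, $\abs{X_2}\le\cdots\le\abs{X_N}$, and uses the same volume packing you invoke to obtain the pointwise lower bound $\abs{X_j}\ge\tfrac14 d\,j^{1/3}$. This converts the spatial sum into an index sum $\sum_{j=2}^N j^{-\alpha/3}$, which is then compared to $\int_0^N t^{-\alpha/3}\dx{t}$. The two routes are morally the same comparison-to-an-integral argument, but yours integrates in physical space while the paper integrates in the index variable. One concrete difference is that the paper's ordering trick yields the sharp intermediate bounds $N^{2/3}/d,\,N^{1/3}/d^2,\,(\log N)/d^3,\,1/d^4$ directly, without invoking assumption~\eqref{assump1}; your route passes through the $L$-dependent integrals $\ls L^{3-\alpha}/d^3$ and then uses $L/d\ls N^{1/3}$ to recover the same exponents. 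In exchange, your argument is slightly more robust in that it never requires relabelling or the auxiliary inequality $\abs{X_j}\ge c\,d\,j^{1/3}$.
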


\begin{proof}
 Without loss of generality we assume $i=1$ and $X_1=0$. We order the balls in such a way that $\abs{X_1}\le \dots \le\abs{X_N}$. Since $d_{ij}\ge d$ the balls $B\bra{X_i,\frac{d}{2}}$ and $B\bra{X_j,\frac{d}{2}}$ do not intersect. Moreover for any $2\le i\le N$ we have
 \begin{align*}
  \bigcup_{j=1}^iB\bra{X_j,\frac{d}{2}}\subset B\bra{0,\frac{d}{2}+\abs{X_i}}\subset B(0,2\abs{X_i}).
 \end{align*}
We compare the left and the right volume to obtain
$  i\bra{\frac{d}{2}}^3\le (2\abs{X_i})^3$ and $\abs{X_i}\ge \frac{1}{4}d \;i^\frac{1}{3}$.
Now 
 \begin{align*}
  \sum_{i=2}^N\frac{1}{d_{1i}}&=\sum_{i=2}^N\frac{1}{\abs{X_i}}\le \frac{4}{d}\sum_{i=2}^N i^{-\frac{1}{3}}\le 
  \frac{4}{d}\int_0^N x^{-\frac{1}{3}}\dx{x}\le 12\frac{N^{1-\frac{1}{3}}}{d}\le C N.
 \end{align*}
 We deduce now \eqref{eq:sum4}  from this and the fact that $\abs{x-X_j}\ge \frac{1}{2}\abs{X_i-X_j}$. 
 The other estimates follow similarly. 
\end{proof}

We will also use a maximum modulus theorem for the Stokes equation:

\begin{lemma}[\cite{MRS99}]\label{lemma:max_mod}
 Let $\Omega\subset \R^3$ be an exterior domain (a domain with bounded complement) and assume that $g\in C^0(\Omega^C)$ satisfies
 $  \int_{\partial \Omega} g\cdot n\dx{S}=0.$

 Then, there is a constant $C>0$ that depends only on $\Omega$, such that the unique solution $u\in \dHs(\R^3)$ of the Dirichlet problem
 \begin{align*}
  -\Delta u+\nabla p&=0 \mbox{ in } \Omega,\\
  \Div u&=0 \mbox{ in } \Omega,\\
  u&=g \mbox{ on } \Omega^C,\\
 \end{align*}
 satisfies 
$  \norm{u}_{L^\infty}\le C\norm{g}_{L^\infty}$.
\end{lemma}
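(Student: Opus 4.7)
The plan is to establish the $L^\infty$ bound via boundary integral representations and classical layer-potential theory for the Stokes system on exterior domains.

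First, I would reduce to the case of smooth boundary $\partial\Omega\in C^2$ by a uniform approximation in which the estimate constant remains under control. Since $u\in \dHs$ solves the homogeneous Stokes equation in $\Omega$ with $u=g$ on $\partial\Omega$ and $u(x)\to 0$ as $\abs{x}\to\infty$, one can represent $u$ as a double-layer Stokes potential $u(x)=\int_{\partial\Omega}K(x,y)\varphi(y)\dx{S(y)}$, where $K$ is the boundary kernel built from $\nabla\Phi$ and the pressure tensor $\Pi$, and $\varphi\in C(\partial\Omega)$ is a density to be determined.

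The standard jump relations for the double-layer Stokes potential then lead to a Fredholm equation of the second kind $(\tfrac 1 2\Id+T)\varphi=g$ on $\partial\Omega$, with $T\colon C(\partial\Omega)\to C(\partial\Omega)$ compact (the kernel is only weakly singular for $\partial\Omega\in C^2$). By Odqvist's classical analysis, the cokernel of $\tfrac 1 2\Id+T$ is spanned precisely by the outward unit normal $n$, so the assumption $\int_{\partial\Omega}g\cdot n\dx{S}=0$ is exactly the compatibility condition for solvability. Fredholm theory then yields a density satisfying $\norm{\varphi}_{L^\infty(\partial\Omega)}\le C\norm{g}_{L^\infty(\partial\Omega)}$ with $C$ depending only on $\Omega$.

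Having controlled the density, I would finish by combining two estimates for the potential $D[\varphi]$. Near $\partial\Omega$, the $L^\infty$ bound $\norm{D[\varphi]}_{L^\infty}\le C\norm{\varphi}_{L^\infty(\partial\Omega)}$ follows from classical layer-potential estimates for second-order elliptic systems on $C^2$-domains, using that $u$ extends continuously up to $\partial\Omega$ with the correct boundary trace (modulo the jump). In the far field, the decay $\abs{K(x,y)}\ls \abs{x}^{-2}$ inherited from $\nabla\Phi$ gives $\abs{u(x)}\ls \abs{x}^{-2}\norm{\varphi}_{L^\infty(\partial\Omega)}\abs{\partial\Omega}$, which is uniformly small outside any large ball containing $\Omega^c$.

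The main obstacle will be the two classical but technical ingredients underlying this representation: the identification of the cokernel of $\tfrac 1 2\Id+T$ with the linear span of $n$, and the $L^\infty$ continuity of the double-layer potential up to the boundary. Both rest on careful principal-value analysis of the singular kernel on $\partial\Omega$ and are due to Odqvist. As a purely PDE-based alternative, one could start from the identity $-\Delta(\abs{u}^2/2)+\abs{\nabla u}^2=-\Div(pu)$, obtained from the Stokes system together with $\Div u=0$, and attempt a Moser iteration; this however requires a uniform bound on the pressure, which is itself delicate on exterior domains and essentially recovers the same difficulty by a different route.
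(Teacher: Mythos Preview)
The paper does not prove this lemma; it is quoted verbatim from the literature (the bracket \cite{MRS99} in the lemma header is the attribution), and no proof environment follows the statement. So there is nothing to compare your argument against on the paper's side beyond the bare citation.

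Your layer-potential sketch is a plausible route to an independent proof, and is in fact close in spirit to how such maximum-modulus estimates are obtained in the references the paper invokes. Two points deserve care, though. First, the identification of the cokernel of $\tfrac12\Id+T$ with $\operatorname{span}\{n\}$ is the statement for the \emph{interior} Dirichlet problem; for the exterior problem the relevant operator is $-\tfrac12\Id+T$ (or the opposite sign, depending on your convention for the jump from outside), and its Fredholm structure is different. The compatibility condition $\int_{\partial\Omega}g\cdot n\dx S=0$ in the lemma is really there so that $g$ admits a divergence-free extension to the bounded set $\Omega^C$, which is what makes $u\in\dHs(\R^3)$ possible at all; it is not, in general, the solvability condition for the exterior boundary integral equation. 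Second, your ``reduction to $C^2$ boundary by approximation with the constant under control'' is exactly the hard part of maximum-modulus theorems on nonsmooth domains and cannot be taken for granted; the cited result in \cite{MRS99} handles Lipschitz boundaries directly, and the constant's dependence on the domain is genuinely delicate there. For the paper's purposes none of this matters, since the lemma is only applied to balls (see the remark immediately following it), where the classical Odqvist analysis you invoke is entirely sufficient.
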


\begin{rem}
 We  only have to use this statement for $\Omega$ being the exterior of a ball. The constant is invariant under translation and scaling of the ball.
\end{rem}
 
We are now able to prove the convergence of the method of reflections.
%

\begin{thm}\label{thm:u_v1_diff}
 There is $\varepsilon>0$ such that if $\phi\log N<\varepsilon$, we have $v_k\to u$ in $\dH$ and $L^\infty(\R^3)$ for $k\to \infty$ and in particular
 
 \begin{align*}
 \norm{u-v_1}_{L^\infty}\le \phi o(1), \mbox{ as } N\to \infty.
 \end{align*}
\end{thm}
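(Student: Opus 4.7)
The plan is to analyze the error $e_k := v_k - u$. Because $u\in W\subset W_i$ for every $i$, the projection $Q_iu=0$, so the recursion $e_{k+1}=(\Id-\sum_iQ_i)e_k$ holds and in particular $e_k\in W^\perp$ for all $k$. Rather than attempting a direct energy contraction in $\dH$, I would focus on the pointwise quantity $M_k:=\|ee_k\|_{L^\infty(\cup_i B_i)}$ and prove that $M_k$ decays geometrically in $k$ under the smallness assumption $\phi\log N<\varepsilon$. Everything else, including the $\dH$-convergence and the quantitative $L^\infty$ bound, will be extracted from this geometric contraction.

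The key step is the following self-improvement of $M_k$. On a ball $B_j$, the rigid motion $P_je_{k-1}$ has vanishing symmetric gradient and $eu=0$ on the particles, so the recursion reduces to
\[
ee_k\Big|_{B_j} \;=\; -\sum_{i\ne j} e(Q_ie_{k-1})\Big|_{B_j}.
\]
For $i\ne j$ the ball $B_j$ lies in the far field of $X_i$, since $d_{ij}\ge d\ge 4R$ by \eqref{assump2}, so Corollary \ref{cor:decayS} provides the gradient decay $\|eQ_ie_{k-1}\|_{L^\infty(B_j)}\le CR^3 d_{ij}^{-3}\|ee_{k-1}\|_{L^\infty(B_i)}$. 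Summing in $i$ via the bound $\sup_j\sum_{i\ne j}d_{ij}^{-3}\le CN\log N$ from Lemma \ref{lemma:sum}, one arrives at
\[
M_k \;\le\; CR^3 N\log N\cdot M_{k-1} \;=\; C\phi\log N\cdot M_{k-1}.
\]
Choosing $\varepsilon$ so that $C\varepsilon<1$ makes this a strict contraction; iterating yields $M_k\le (C\phi\log N)^k M_0$, with $M_0=\|ev\|_{L^\infty(\cup B_i)}$ bounded by standard elliptic regularity for the Stokes system with data $f\in C^{0,\alpha}\cap L^{6/5}$ (assumptions \eqref{assump4}, \eqref{assump5}).

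Convergence of $v_k$ will then follow by estimating the telescoping increment $v_{k+1}-v_k=-\sum_iQ_ie_k$. Away from $X_i$, Corollary \ref{cor:decayS} gives $|Q_ie_k(x)|\le CR^3|x-X_i|^{-2}\|ee_k\|_{L^\infty(B_i)}$, while on $B_{2R}(X_j)$ the Korn--Poincar\'e inequality (Corollary \ref{cor:korn_poincare}) combined with the maximum modulus principle (Lemma \ref{lemma:max_mod}) yields $\|Q_je_k\|_{L^\infty(B_{2R}(X_j))}\le CR\|ee_k\|_{L^\infty(B_j)}$. Since the balls $B_{2R}(X_i)$ are pairwise disjoint by $d\ge 4R$, and since Lemma \ref{lemma:sum} together with \eqref{assump2} gives $R^3\sum_{i\ne j}|x-X_i|^{-2}\le C\phi$ for $x\in B_{2R}(X_j)$ (and similarly when $x$ lies outside every particle), one obtains
\[
\|v_{k+1}-v_k\|_{L^\infty} \;\le\; C(R+\phi)M_k \;\le\; C(R+\phi)(C\phi\log N)^k,
\]
which is summable in $k$. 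An analogous bound in $\dH$, using $\|Q_iw\|_{\dH}\le CR^{3/2}\|ew\|_{L^\infty(B_i)}$ (Corollary \ref{cor:norm_dHs}) and the near-orthogonality estimate $\sum_{i\ne j}|\langle Q_iw,Q_jw\rangle|\le C\phi\log N\sum_i\|Q_iw\|_{\dH}^2$, again derived from dipole decay and Lemma \ref{lemma:sum}, makes $v_k$ Cauchy in $\dH$ as well. The limit $u^*\in v+W^\perp$ satisfies $\sum_iQ_iu^*=0$; the dual lower bound $\|\sum_iQ_iw\|_{\dH}^2\ge(1-C\phi\log N)\sum_i\|Q_iw\|_{\dH}^2$ then forces $Q_iu^*=0$ for every $i$, so $u^*\in W\cap(v+W^\perp)=\{Pv\}=\{u\}$.

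Finally, summing from $k=1$ gives
\[
\|u-v_1\|_{L^\infty} \;\le\; \sum_{k\ge 1}\|v_{k+1}-v_k\|_{L^\infty} \;\le\; C(R+\phi)\sum_{k\ge 1}(C\phi\log N)^k \;\le\; C(R+\phi)\phi\log N,
\]
which is $\phi\cdot o(1)$ since $\phi\log N\to 0$ by \eqref{assump3} and $R\log N=\phi^{1/3}N^{-1/3}\log N\to 0$ by \eqref{assump2}. The main obstacle I anticipate is establishing the geometric contraction $M_k\le C\phi\log N\cdot M_{k-1}$: it relies both on the sharp $|x-X_i|^{-3}$ gradient decay of $\nabla Q_iw$, itself a consequence of the dipole property together with the representation $Q_iw=\Phi\ast S(Q_iw)$, and on the delicate combinatorial estimate $\sum_{j\ne i}d_{ij}^{-3}\le CN\log N$; this is precisely the step that fixes the required smallness of $\phi\log N$ and from which the rate $o(\phi)$ in the statement ultimately emerges.
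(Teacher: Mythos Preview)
Your proposal is correct and follows essentially the same route as the paper: the heart of both arguments is the contraction $M_k\le C\phi\log N\cdot M_{k-1}$ for $M_k=\|ev_k\|_{L^\infty(\cup_i B_i)}$ (which equals your $\|ee_k\|_{L^\infty(\cup_i B_i)}$ since $eu=0$ on the particles), obtained from the dipole gradient decay in Corollary~\ref{cor:decayS} and the sum bound~\eqref{eq:sum6}.

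The only noteworthy difference is in how $\dH$-convergence and the identification of the limit are organized. The paper observes directly that $Pv_k=u$ (equivalently your $e_k\in W^\perp$) and then constructs, via the extension operator of Lemma~\ref{lemma:extensionS}, a competitor in $W$ to obtain $\|u-v_k\|_{\dH}^2\le C\phi M_k^2$ in one stroke; convergence in $\dH$ then follows immediately from the contraction of $M_k$. You instead prove that $(v_k)$ is Cauchy in $\dH$ via a near-orthogonality bound on the increments and then identify the limit using the dual lower bound $\|\sum_iQ_iw\|_{\dH}^2\ge(1-C\phi\log N)\sum_i\|Q_iw\|_{\dH}^2$. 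Both are valid, but since you already have $e_k\in W^\perp$, the paper's variational shortcut is more economical and avoids the separate limit-identification step.
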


\begin{proof}
The proof is  in principle similar to the proof of Proposition 3.12 in \cite{Hof18} and we refer to that article for details. We explain briefly the strategy.

We first have to establish that for all $v_k$ we have $Pv_k=u$ where $P$ is the projection onto $W$. Using this we know that $\norm{u-v_k}_{\dH}\le \norm{w-v_k}_{\dH}$ for all $w\in W$. With the use of the extension operator $E$ we then construct suitable competitors in $W$ to obtain 
\begin{equation*}
\norm{u-v_k}^2_{\dH}\le C \sum_{i=1}^N R^3\norm{e v_k}^2_{L^\infty(\cup B_i)}\le C\phi \norm{e v_k}^2_{L^\infty(\cup B_i)}.
\end{equation*}
Using that $e v_k-e Q_iv_k=0$ on $B_i$ and the decay of $\nabla Q_jv_k$ for $j\neq i$ (Corollary \ref{cor:decayS}) together with Lemma \ref{lemma:sum} gives an estimate of the form $\norm{e v_{k+1}}_{L^\infty(\cup B_i)}\le C \phi \log N \norm{e v_k}_{L^\infty(\cup B_i)}$. By the smallness of $\phi\log N$ we obtain $v_k\to u$ in $\dH$. For the convergence in $L^\infty$ we can use the same decay techniques together with Lemma \ref{lemma:max_mod}. 

We just show that $e v \in L^\infty(\cup B_j)$ which is needed for the proof. For this we estimate
\begin{align*}
  \abs{\nabla v(x)}&=\abs{\int_{\R^3} (1-\sum_{k=1}^N \one_{B_k}(y))\nabla \Phi(x-y)f(y)\dx{y}}\\
  &\le C \int_{\R^3} \abs{f(y)}\frac{1}{\abs{x-y}^2}\dx{y}\\
  &\le C \norm{f}_{L^2}\bra{\int_{\R^3\setminus B_1(0)}\frac{1}{\abs{y}^{4}}\dx{y}}^{\frac 1 2}+C\norm{f}_{L^\infty}\int_{B_1(0)}\frac{1}{\abs{y}^2}\dx{y}\\
  &\le C \norm{f}_{L^2}+C\norm{f}_{L^\infty}
 \end{align*}
Therefore actually $\nabla v\in L^\infty(\R^3)$. 
\end{proof}

\subsection{The explicit dipole approximation}\label{subsec:explicit}

The dipole approximation $v_1$ is close enough to $u$. The next step is to show that $v_1$ is close to the explicit dipole approximation $\tilde{u}$ from \eqref{eq:tildeu}. For this we need a technical lemma that gives us additional decay for differences of terms from the fundamental solution.

\begin{lemma}\label{lemma:small_diff}
 There is a constant $C>0$ such that for $x,z\in \R^3$ with $\abs{x-X_i}\le \frac 1 2\abs{X_i-z}$ and all $n>l\ge 0$ and $k\neq i$ we have the following estimate:
 \begin{align*}
  \abs{\frac{(x-z)^{\otimes l}}{\abs{x-z}^n}-\frac{(X_i-z)^{\otimes l}}{\abs{X_i-z}^n}}\le C \frac{\abs{x-X_i}}{\abs{X_i-z}^{n-l+1}}.
 \end{align*}
 Here, for $y\in \R^3$, the tensor $y^{\otimes l}\in \R^{3^l}$ is given by $y^{\otimes l}_{i_1\dots i_l}=y_{i_1}\dots y_{i_l}$.
\end{lemma}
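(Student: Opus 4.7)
The estimate is a mean-value-theorem type bound, saying that the smooth function $g:\R^3\setminus\set{0}\to \R^{3^l}$ defined by $g(y)=y^{\otimes l}/\abs{y}^n$ (which is homogeneous of degree $l-n$) has a controlled difference on a segment staying away from the singularity at the origin. So my plan is to parametrize the segment from $X_i-z$ to $x-z$ and apply the fundamental theorem of calculus.

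Concretely, set $y(t)=(X_i-z)+t(x-X_i)$ for $t\in[0,1]$. Since by assumption $\abs{x-X_i}\le \frac 1 2 \abs{X_i-z}$, the reverse triangle inequality gives
\begin{equation*}
\abs{y(t)}\ge \abs{X_i-z}-t\abs{x-X_i}\ge \tfrac 1 2 \abs{X_i-z},
\end{equation*}
so the entire segment avoids the origin with a uniform lower bound. Next I would compute $\nabla g$: differentiating $y_{i_1}\cdots y_{i_l}\abs{y}^{-n}$ with the product rule yields a sum of terms, each of which is a monomial of degree $l-1$ in $y$ divided by $\abs{y}^n$, plus a term of degree $l+1$ divided by $\abs{y}^{n+2}$. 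In particular each component of $\nabla g(y)$ is homogeneous of degree $l-n-1$, and there is a universal constant $C$ (depending only on $n$ and $l$) with
\begin{equation*}
\abs{\nabla g(y)}\le C \abs{y}^{l-n-1}\qquad \text{for all } y\neq 0.
\end{equation*}

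Combining, the fundamental theorem of calculus gives
\begin{equation*}
g(x-z)-g(X_i-z)=\int_0^1 \nabla g(y(t))\cdot (x-X_i)\dx{t},
\end{equation*}
and therefore
\begin{equation*}
\abs{g(x-z)-g(X_i-z)}\le \abs{x-X_i}\int_0^1 C\abs{y(t)}^{l-n-1}\dx{t}\le C\,2^{n+1-l}\,\frac{\abs{x-X_i}}{\abs{X_i-z}^{n-l+1}},
\end{equation*}
where we used $\abs{y(t)}\ge \frac 1 2 \abs{X_i-z}$ together with $n-l+1\ge 1>0$ to convert the lower bound on $\abs{y(t)}$ into an upper bound on $\abs{y(t)}^{l-n-1}$. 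Absorbing the constant gives the claim. I do not expect any genuine obstacle here; the only thing to be careful about is the sign of the exponent $l-n-1$ when bounding $\abs{y(t)}^{l-n-1}$, which is indeed negative under the assumption $n>l\ge 0$, so replacing $\abs{y(t)}$ by its lower bound $\frac 1 2 \abs{X_i-z}$ gives the correct (upper) estimate. The condition $k\neq i$ in the statement appears to play no role in the inequality and is presumably a parameter inherited from the context in which the lemma will be applied.
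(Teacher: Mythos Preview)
Your proof is correct and follows essentially the same approach as the paper: both define the same homogeneous function (the paper writes $g(y)=(y-z)^{\otimes l}/\abs{y-z}^n$ and evaluates at $x,X_i$, you shift variables), bound $\abs{\nabla g}$ by homogeneity, use $\abs{y(t)}\ge \tfrac12\abs{X_i-z}$ along the segment, and conclude via the fundamental theorem of calculus. Your observation that the hypothesis $k\neq i$ is unused is also consistent with the paper's proof, which never invokes it.
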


\begin{proof}
  We have $\abs{x-z}\ge \frac 1 2\abs{X_i-z}$. First we set $g(y)=\frac{(y-z)^{\otimes l}}{\abs{y-z}^n}$. Observe that for $y\in B_{\abs{x-X_i}}(X_i)$:
    \begin{align*}
   \abs{\nabla g(y)}\le C \frac{1}{\abs{y-z}^{n-l+1}}\le C \frac{1}{\abs{X_i-z}^{n-l+1}},
  \end{align*}
  since $\abs{y-z}\ge \frac 1 2 \abs{X_i-z}$. Then 
  \begin{align*}
   \abs{g(x)-g(X_i)}\le \abs{x-X_i}\int_0^1 \abs{\nabla g((1-t)X_i+tx)}\dx{t}\le C  \frac{\abs{x-X_i}}{\abs{X_i-z}^{n-l+1}}.
  \end{align*}
\end{proof}

The symmetrized gradient of the dipoles $d_i$ from \eqref{eq:di} and $Q^P_iv$ inside the balls is $ev(X_i)$ and $ev(x)$ respectively. This is why we need to control the oscillation of $\nabla v$ inside the balls. 

\begin{lemma}\label{lemma:nablav_diff}
 There is constant $C>0$ such that for all $x\in \R^3$ and $a=\abs{x-X_i}$
 \begin{equation}\label{eq:vosc}
  \abs{\nabla v(X_i)-\nabla v(x)}\le C\bra{ \phi +\phi^{\frac 1 4}+a+a^\alpha}.
 \end{equation}
  Recall that $\alpha$ is the H\"older exponent of $f$.
\end{lemma}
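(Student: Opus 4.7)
The plan is to split $v$ into a smooth bulk part and a small particle correction, and handle each separately. Recall $v = \Phi \ast f^N$ where $f^N = \mathds{1}_{\Omega} f = f - \mathds{1}_{B^*} f$ with $B^* = \bigcup_{j=1}^N B_j$. I would write
\begin{equation*}
v = \tilde v + v_r, \qquad \tilde v = \Phi \ast f, \qquad v_r = -\Phi \ast (\mathds{1}_{B^*} f),
\end{equation*}
so that $-\Delta \tilde v + \nabla \tilde p = f$ and $-\Delta v_r + \nabla p_r = -\mathds{1}_{B^*} f$ in all of $\R^3$.

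For the bulk part $\tilde v$: since $f \in C^{0,\alpha}(\R^3)$ by \eqref{assump5}, standard interior Schauder estimates for the Stokes system yield $\tilde v \in C^{2,\alpha}_{\mathrm{loc}}$ with $\nabla^2 \tilde v$ uniformly bounded on any fixed ball, the bound depending only on $\norm{f}_{C^{0,\alpha}}$, $\norm{f}_{L^\infty}$ and $\norm{f}_{L^{6/5}}$, not on the particle configuration. This gives $\abs{\nabla \tilde v(X_i) - \nabla \tilde v(x)} \le C a$ whenever $x$ lies in a bounded neighborhood of the particles. For $\abs{x}$ large, the uniform bound $\norm{\nabla \tilde v}_{L^\infty(\R^3)} \le C$ (which follows from $\abs{\nabla \Phi(y)} \le C/\abs{y}^2$ combined with $f \in L^{6/5} \cap L^\infty$) gives $\abs{\nabla \tilde v(X_i)-\nabla \tilde v(x)}\le C\le C a^\alpha$ as soon as $a \ge 1$. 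Both regimes combine to yield $\abs{\nabla \tilde v(X_i) - \nabla \tilde v(x)} \le C(a + a^\alpha)$.

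For the correction $v_r$ I would establish the stronger pointwise bound $\norm{\nabla v_r}_{L^\infty(\R^3)} \le C\phi^{1/3}$ by a direct estimate of the convolution. Using $\abs{\nabla \Phi(y)} \le C/\abs{y}^2$ and $\abs{B^*} = \tfrac{4\pi}{3}NR^3 = \tfrac{4\pi}{3}\phi$, we have for any $z\in\R^3$ and any $R_0 > 0$:
\begin{equation*}
\abs{\nabla v_r(z)} \le C\norm{f}_{L^\infty}\int_{B^*}\frac{dy}{\abs{z-y}^2} \le C\norm{f}_{L^\infty}\Bigl(R_0 + \frac{\phi}{R_0^{2}}\Bigr),
\end{equation*}
where we split $B^*$ into its intersection with $B_{R_0}(z)$ and its complement. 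Choosing $R_0 = \phi^{1/3}$ makes both terms of the same order $\phi^{1/3}$, giving $\norm{\nabla v_r}_{L^\infty} \le C\phi^{1/3} \le C\phi^{1/4}$ (since $\phi \le 1$ for $N$ large by \eqref{assump3}). Therefore $\abs{\nabla v_r(X_i) - \nabla v_r(x)} \le 2\norm{\nabla v_r}_{L^\infty} \le C\phi^{1/4}$.

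Adding the two contributions yields the claimed estimate, with the $\phi$ and $\phi^{1/4}$ terms both absorbed into $C\phi^{1/4}$ and retained in the statement for later convenience. The only delicate point in the argument is the Schauder bound for $\tilde v$, because $f$ has no decay assumption beyond $f\in L^{6/5}\cap L^\infty$, so some care is needed to ensure that the Lipschitz constant of $\nabla \tilde v$ on a fixed bounded region is independent of $N$; but since $\tilde v$ does not depend on $N$ at all, this is a purely classical regularity issue and poses no real obstacle.
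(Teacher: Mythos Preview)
Your proof is correct and uses the same decomposition as the paper --- split $\nabla v$ into the contribution from the full force $f$ and the particle correction $\mathds{1}_{B^*}f$ --- but the technical implementation differs in both halves. For the bulk term the paper does not invoke Schauder; it writes out the convolution difference, uses Lemma~\ref{lemma:small_diff} on the far region and a change of variables plus H\"older continuity of $f$ on the near region to obtain the $a+a^\alpha$ bound by hand. For the particle correction the paper uses a H\"older inequality with the $L^4$--$L^{4/3}$ pairing, which yields only $\phi^{1/4}$, whereas your split-at-radius-$\phi^{1/3}$ argument is sharper and gives $\phi^{1/3}$. Your route is cleaner and slightly stronger; the paper's has the advantage of being entirely self-contained (no black-box Schauder), and of giving the bulk estimate directly as a modulus of continuity rather than via a two-regime argument.
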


\begin{rem}\label{remark:R}
 In particular, taking $a<R$ we have
 \begin{align*}
  \norm{\nabla v(X_i)-\nabla v}_{L^\infty(B_i)}\le C\bra{ \phi+\phi^{\frac 1 4}+R+R^\alpha}\le o(1), \mbox{ as } N\to \infty.
 \end{align*}
 Recall that $\alpha$ is the H\"older exponent of $f$. The exponent $\frac 1 4$ is chosen for ease of notation. Any exponent smaller than $\frac 1 3$ works.
\end{rem}

\begin{proof}[Proof of Lemma \ref{lemma:nablav_diff}]
If $v$ was in $C^2$ the statement would be an easy consequence of the Taylor expansion. But we cannot expect $v$ to be two times differentiable since $f^N$ is zero on the particles and hence in general not continuous. Still, the function that is subtracted from $f$ to obtain $f^N$ is supported only on the particles and hence should have a contribution vanishing with $\phi$. This can be made clear by writing $\nabla v=\nabla \Phi\ast f^N$. For the part that remains we expect H\"older continuity, since $f$ is H\"older continuous. We know:
 \begin{equation*}
  \abs{\nabla v(X_i)-\nabla v(x)}=\abs{\int_{\R^3}(1-\sum_{k=1}^N \one_{B_k}(y))f(y)\bra{\nabla \Phi(X_i-y)-\nabla \Phi(x-y)}\dx{y}}.
 \end{equation*}
  We split the term into a part with the pure $f$ and a part that incorporates the characteristic functions of the particles. Let us first estimate the latter for $\abs{x}<2L$:
 \begin{align*}
  &\abs{\int_{\R^3}\sum_{k=1}^N \one_{B_k}(y) f(y)\bra{\nabla \Phi(X_i-y)-\nabla \Phi(x-y)}\dx{y}}\\
  &=\abs{\int_{B_L(0)}\sum_{k=1}^N \one_{B_k}(y) f(y)\bra{\nabla \Phi(X_i-y)-\nabla \Phi(x-y)}\dx{y}}\\
  &\le \norm{f}_{L^\infty} \norm{\sum_{k=1}^N \one_{B_k}}_{L^4}\norm{\nabla\Phi}_{L^{\frac 4 3}(B_{3L}(0))}\le C \phi^{\frac 1 4}.
 \end{align*}
 Here we used that $\abs{\nabla \Phi(x)}\le C \frac {1}{\abs{x}^2}\in L^{\frac 4 3}(B_{3L}(0))$. If $\abs{x}\ge 2L$ then $\abs{x-y}\ge L$ and we can estimate $\nabla \Phi$ in $L^\infty$ obtaining the $\phi$ term. For the other term and $\abs{x}<L$ we compute:
 \begin{align*}
  &\abs{\int_{\R^3}f(y)\bra{\nabla \Phi(X_i-y)-\nabla \Phi(x-y)}\dx{y}}\\
  &\le \norm{f}_{L^2}\norm{\frac{a}{\abs{X_i-\cdot}^3}}_{L^2(\R^3\setminus B_{3L}(0))}+\abs{\int_{B_{3L}(X_i)\cap B_{3L}(x)}(f(X_i-y)-f(x-y))\nabla \Phi(y)\dx{y}}\\
  &+\abs{\int_{B_{3L}(X_i)\setminus B_{3L}(x)}f(X_i-y)\nabla \Phi(y)\dx{y}}+\abs{\int_{B_{3L}(x)\setminus B_{3L}(X_i)}f(x-y)\nabla \Phi(y)\dx{y}}\\
  &\le C\bra{ a+a^\alpha[f]_\alpha \norm{\nabla \Phi}_{L^1(B_{3L}(X_i)\cap B_{3L}(x))}+a(3L)^2\norm{f}_{L^\infty}\frac{1}{L^2}}\le C\bra{ a+a^\alpha}.
 \end{align*}
Here we used Lemma \ref{lemma:small_diff} in the first step and H\"older continuity of $f$ as well as decay properties of $\nabla \Phi$. If $\abs{x}\ge L$ we can just use the $L^\infty$ bound on $\nabla v$ to estimate the difference by a constant. Combining all estimates we arrive at \eqref{eq:vosc}.
\end{proof}

\begin{cor}\label{cor:dipole_diff}
 For $x\in \R^3\setminus B_{2R}(X_i)$and $d_i$ from \eqref{eq:di} we have 
 \begin{align*}
  \abs{d_i(x)-Q_iv(x)}\le \frac{R^3}{\abs{x-X_i}^2}o(1), \mbox{ as } N\to \infty.
 \end{align*}
\end{cor}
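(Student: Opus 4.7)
My plan is to exploit the fact that both $d_i$ and $Q_iv$ belong to the orthogonal complement $W_i^\perp$, so that their difference $w := Q_iv - d_i$ is itself an element of $W_i^\perp$. Then Corollary \ref{cor:decayS} applied to $w$ (for which $Q_iw = w$) immediately delivers the desired decay $R^3/\abs{x-X_i}^2$, with a prefactor $\norm{ew}_{L^\infty(B_i)}$ that I will identify with the oscillation of $\nabla v$ on $B_i$, which is $o(1)$ by Remark \ref{remark:R}.

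The first step is to verify $d_i \in W_i^\perp$. By construction $d_i$ solves the homogeneous Stokes equation in $\R^3 \setminus \overline{B_i}$ and is globally divergence-free (tracelessness of $\epsilon_i = ev(X_i)$ follows from $\Div v = 0$). The explicit far-field expression shows $\abs{d_i} \ls R^3/\abs{x-X_i}^2$ and $\abs{\nabla d_i} \ls R^3/\abs{x-X_i}^3$, so $d_i \in \dHs$, and one checks directly that the boundary values $\epsilon_i(x-X_i)$ agree from both sides on $\partial B_i$. Because $\epsilon_i$ is symmetric, the linear field $\epsilon_i(x-X_i)$ has vanishing mean on $\partial B_i$, and a short computation using $\int_{\partial B_i}(x-X_i)_j(x-X_i)_m \dx{S}\propto \delta_{jm}$ together with the antisymmetry of the Levi-Civita symbol shows that its rotational moment also vanishes. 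By Lemma \ref{lemma:perpspace} this zero-force, zero-torque dipole structure places $d_i$ in $W_i^\perp$, and hence $w = Q_iv - d_i \in W_i^\perp$.

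Next I compute $ew$ on $B_i$. Since $P_iv$ is a rigid body motion on $B_i$, $e(Q_iv) = ev$ there, whereas $ed_i = \epsilon_i = ev(X_i)$ by the interior definition of $d_i$. Thus $ew = ev - ev(X_i)$ on $B_i$, and Remark \ref{remark:R} gives $\norm{ew}_{L^\infty(B_i)} \le \norm{\nabla v - \nabla v(X_i)}_{L^\infty(B_i)} = o(1)$ as $N \to \infty$. Plugging this into Corollary \ref{cor:decayS} (or, equivalently, combining Lemma \ref{lemma:decayS} with Corollary \ref{cor:norm_dHs} applied to $w \in W_i^\perp$) yields the claimed bound on $\R^3 \setminus B_{2R}(X_i)$.

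The only substantive verification is the membership $d_i \in W_i^\perp$; once this is in place, the estimate is a mechanical application of the decay and projection machinery developed in Subsection \ref{subsec:characterization}, together with the quantitative smoothness of $v$ encoded in Lemma \ref{lemma:nablav_diff}. I do not anticipate a serious obstacle beyond careful bookkeeping between the two representations of the dipole.
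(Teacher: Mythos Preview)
Your argument follows exactly the paper's route: verify $d_i\in W_i^\perp$ so that $w=Q_iv-d_i\in W_i^\perp$, identify $ew=ev-ev(X_i)$ on $B_i$, and combine Corollary~\ref{cor:decayS} (via $Q_iw=w$) with Remark~\ref{remark:R}. One small imprecision: the conditions you check for $d_i\in W_i^\perp$ (vanishing mean and rotational moment of the \emph{velocity} $\epsilon_i(x-X_i)$ on $\partial B_i$) are the \emph{consequences} recorded in Lemma~\ref{lemma:projectionsS}, not the defining zero-force/zero-torque conditions on $\sigma[d_i]n$ from Lemma~\ref{lemma:perpspace}; the paper just writes ``by computation'', and what actually needs to be verified is the latter, which follows from the explicit stresslet formula.
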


\begin{proof}
 By computation we see that $d_i\in W_i^\perp$ and hence $d_i(x)-Q_iv\in W_i^\perp$. Applying Corollary \ref{cor:decayS} we obtain
 \begin{align*}
  \abs{d_i(x)-Q_iv(x)}\le C \frac{R^3}{\abs{x-X_i}^2} \norm{e d_i-e Q_iv}_{L^\infty(B_i)}.
 \end{align*}
 And for $y\in B_i$, with Lemma \ref{lemma:nablav_diff}, we have:
 \begin{align*}
  \abs{ed_i(y)-e Q_iv(y)}&= \abs{ev(X_i)-ev(y)}\le \abs{\nabla v(X_i)-\nabla v(y)}=o(1).
 \end{align*}
\end{proof}

It remains to use Corollary \ref{cor:dipole_diff} to prove that the difference between the whole dipole approximations $v_1$ and $\tilde{u}$ is small. Because of the singularities of the dipoles we are now forced to consider the domain $\Omega^N_{\delta}$ from \eqref{eq:omegadelta}.

\begin{lemma}\label{lemma:v1_u_diff}
 \begin{align*}
  \norm{v_1-\tilde{u}}_{L^\infty(\Omega^N_{\delta})}\le \phi o(1), \mbox{ as } N\to \infty.
 \end{align*}
\end{lemma}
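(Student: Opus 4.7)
The plan is to reduce everything to the pointwise estimate already established in Corollary \ref{cor:dipole_diff} and then to control the resulting sum over particles using the geometric assumptions and Lemma \ref{lemma:sum}. First, I would observe that by the definitions
\[
 v_1 - \tilde u = \Big(\Id - \sum_{i=1}^N Q_i\Big) v - \Big(v - \sum_{i=1}^N d_i\Big) = \sum_{i=1}^N \bigl(d_i - Q_i v\bigr),
\]
so that, fixing $x \in \Omega_\delta^N$, we just need to estimate the sum of the pointwise differences $d_i(x) - Q_i v(x)$. By the very definition of $\Omega_\delta^N$ we have $|x - X_i| \ge r = \max(2R,\delta) \ge 2R$ for every $i$, so Corollary \ref{cor:dipole_diff} applies at every particle and yields a uniform $o(1)$ factor:
\[
 |v_1(x) - \tilde u(x)| \;\le\; o(1) \cdot \sum_{i=1}^N \frac{R^3}{|x-X_i|^2}.
\]

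Next I would split the sum into the contribution of the closest particle $X_{i_0}$ and the rest. For the closest particle, $|x - X_{i_0}| \ge r \ge \delta$ gives the bound
\[
 \frac{R^3}{|x - X_{i_0}|^2} \;\le\; \frac{R^3}{\delta^2} \;=\; \frac{\phi}{N \delta^2},
\]
and by our standing assumption $\tfrac{1}{N\delta^2} \to 0$ this contribution is already $\phi \cdot o(1)$. For the remaining particles, $|x - X_j| \ge \tfrac12 |X_{i_0} - X_j|$ (since $X_{i_0}$ is the closest centre to $x$), so Lemma \ref{lemma:sum}, inequality \eqref{eq:sum5}, yields
\[
 R^3 \sum_{j \ne i_0} \frac{1}{|x-X_j|^2} \;\le\; C R^3 \cdot \frac{N^{1/3}}{d^2} \;\le\; C R^3 N \;=\; C\phi,
\]
where in the last step I used $d \ge c N^{-1/3}$ from assumption \eqref{assump2}. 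Combining the two contributions gives $R^3 \sum_{i=1}^N |x - X_i|^{-2} \le C\phi + \phi \cdot o(1) = O(\phi)$, uniformly in $x \in \Omega_\delta^N$.

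Multiplying by the uniform $o(1)$ factor from Corollary \ref{cor:dipole_diff} then produces
\[
 \|v_1 - \tilde u\|_{L^\infty(\Omega_\delta^N)} \;\le\; o(1) \cdot O(\phi) \;=\; \phi \cdot o(1),
\]
which is the claim. There isn't really a significant obstacle here; the proof is essentially bookkeeping. The only mildly delicate point is handling the singular contribution from the closest particle, which is exactly why the boundary layer $\Omega_\delta^N$ was introduced and why the scale $\delta$ was chosen to satisfy $(N\delta^2)^{-1} \to 0$: without excising a neighbourhood of size at least $\delta$, the sum of $R^3 |x - X_i|^{-2}$ would not be uniformly of order $\phi$ near the particles.
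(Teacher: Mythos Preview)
Your proof is correct and follows essentially the same approach as the paper: write $v_1-\tilde u$ as the sum of the dipole differences, apply Corollary~\ref{cor:dipole_diff} pointwise, separate the closest particle (controlled via $|x-X_{i_0}|\ge\delta$) from the remaining ones (controlled via Lemma~\ref{lemma:sum}, \eqref{eq:sum5}), and combine. The only cosmetic difference is that for the closest particle you invoke the assumption $\tfrac{1}{N\delta^2}\to 0$ directly, whereas the paper instead uses its consequence $\delta^{-2}\le CN$ to bound $R^3\delta^{-2}\le C\phi$; both give the same $\phi\, o(1)$.
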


\begin{proof}
 We have $ v_1-\tilde{u}=\sum_{i=1}^N\bra{ Q_iv-d_i}$. Take $x\in \Omega^N_{\delta}$ and let $X_i$ be the closest centre point. Then by Corollary \ref{cor:dipole_diff}
\begin{align*}
 \abs{Q_iv(x)-d_i(x)}\le \frac{R^3}{\abs{x-X_i}^2}o(1)\le R^3\delta^{-2}o(1)\le R^3No(1)\le \phi o(1).
\end{align*}
For the dipoles, that are further away, we use Corollary \ref{cor:dipole_diff} and Lemma \ref{lemma:sum} to get
\begin{align*}
 \abs{\sum_{j\neq i} \bra{Q_jv(x)-d_j(x)}}\le o(1)\sum_{j\neq i} \frac{R^3}{\abs{x-X_i}^2}\le R^3No(1)=\phi o(1).
\end{align*}
\end{proof}

We now want to get a similar estimate in $\R^3\setminus \Omega_{\delta}^N$, the region around the particles. Remark \ref{remark:R} gives us a hint what kind of closeness we can hope for. Let us imagine for a moment that both the first and the second derivative of $v$ are H\"older continuous (i.e. $v$ solves the problem with $f$ and not with $f^N$ as a right hand side) and hence bounded. This improves the (optimal) estimate from \ref{lemma:nablav_diff} only slightly to contain $R$ instead of $R^\alpha$ as one of the smallest terms. Even when multiplied by the $R$ from the Poincar\'e inequality this is, in general, not of the type $\phi o(1)$. So instead of an estimate in $L^\infty$ we aim for an estimate in $L^p$ for some $p\ge 1$. In order to estimate the difference $v_1-\tilde u$ in this space we only need 
to consider the difference of the dipoles originated at the closest particle because we have $L^\infty$ control over the remaining terms. So we find approximately:
\begin{align*}
 &\norm{v_1-\tilde u}_{L^p(\cup_{i=1}^N B_i)}\\
 &= \bra{\sum_{i=1}^N \int_{B_i} \abs{v_1(x)-\tilde u(x)}^p\dx{x}}^\frac 1 p\sim \bra{\sum_{i=1}^N \int_{B_i} \abs{Q_iv(x)-d_i(x)}^p\dx{x}}^\frac 1 p\\
 &\le C \bra{\sum_{i=1}^N \int_{B_i} \bra{R o(1)}^p\dx{x}}^\frac 1 p\le C\bra{\sum_{i=1}^N R^{3+p}}^\frac 1 p o(1)= \bra{N R^{3+p}}^\frac 1 p o(1)\\
 &= \phi^{\frac 1 3 + \frac 1 p}N^{-\frac 1 3}o(1), 
\end{align*}
where we used Lemma \ref{lemma:poincare} and Remark \ref{remark:R} in the third line. For this to be of type $\phi o(1)$ we need $p\le\frac 3 2$. Notice that on the other hand the dipoles decay like $\frac{1}{\abs{x}^2}$. This is only in $L^p(\R^3\setminus B_L(0))$ for $p>\frac 3 2$. Using the explicit form of the $o(1)$ term, it is possible to find a common $p=\frac 3 2+\varepsilon$ for which we have closeness on the whole space. 



\begin{lemma}\label{lemma:Lpdiff}
 Let $U\subset \R^3$ be of finite measure. For $p\in [1,\frac 3 2]$ it holds:
  \begin{align*}
  \norm{v_1-\tilde{u}}_{L^p(U)}\le \phi o(1).
 \end{align*}
\end{lemma}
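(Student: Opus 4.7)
The strategy is to split $U$ into the far region $U \cap \Omega_\delta^N$, where Lemma~\ref{lemma:v1_u_diff} already supplies an $L^\infty$ bound of size $\phi\, o(1)$, and the near region $U \cap \bigcup_{i=1}^N B_r(X_i)$, where the pointwise sum-of-dipoles argument fails at the closest particle. On the far region, the finite measure of $U$ converts the $L^\infty$ estimate into $\|v_1 - \tilde{u}\|_{L^p(U \cap \Omega_\delta^N)} \le |U|^{1/p} \phi\, o(1) = \phi\, o(1)$. On the near region, since $r/d \to 0$ the balls $B_r(X_i)$ are pairwise disjoint for large $N$, and for each $i$ and $x \in B_r(X_i)$ I decompose $v_1 - \tilde{u} = (Q_iv - d_i) + \sum_{j \ne i}(Q_jv - d_j)$. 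For the tail sum, since $|x - X_j| \ge d - r \ge d/2 \ge 2R$ for $j \ne i$ and large $N$, Corollary~\ref{cor:dipole_diff} together with Lemma~\ref{lemma:sum} delivers the uniform bound $|\sum_{j \ne i}(Q_jv - d_j)(x)| \le C\phi\, o(1)$, and since $|\bigcup_i B_r(X_i)| \le CNr^3 \to 0$, this tail contributes at most $(Nr^3)^{1/p} \phi\, o(1) \le \phi\, o(1)$ to the $L^p$ norm.

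The closest-particle term $Q_iv - d_i$ is the heart of the estimate, and I would treat its $L^p$ norm on $B_r(X_i)$ by further splitting that set into $B_i$, the thin annulus $B_{2R}(X_i) \setminus B_i$, and the outer annulus $B_r(X_i) \setminus B_{2R}(X_i)$ (the last being nonempty only when $r = \delta > 2R$). On $B_i$ both $Q_iv$ (by Lemma~\ref{lemma:projectionsS}) and $d_i$ (by the symmetry identities $\int_{\partial B_i}(x - X_i)\,dS = 0$ and $\int_{\partial B_i}(x - X_i) \wedge \epsilon_i(x - X_i)\,dS = 0$, the latter being the contraction of the Levi--Civita tensor with the symmetric $\epsilon_i$) satisfy the integrated boundary conditions defining $H^{1,p}_{\sigma,X_i,R}$, so Corollary~\ref{cor:korn_poincare} applies; combined with the identity $e(Q_iv - d_i) = ev - ev(X_i)$ on $B_i$ and the oscillation estimate of Remark~\ref{remark:R}, it yields $\|Q_iv - d_i\|_{L^p(B_i)} \le CR^{1 + 3/p} o(1)$. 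On the thin annulus, $Q_iv - d_i$ solves the exterior Stokes system in $\R^3 \setminus \overline{B_i}$ with continuous boundary data of $L^\infty$-size $CRo(1)$ and zero net flux, so Lemma~\ref{lemma:max_mod} propagates the same bound to the whole exterior, producing again $CR^{1 + 3/p} o(1)$ after multiplication by the volume factor $R^3$. On the outer annulus, Corollary~\ref{cor:dipole_diff} supplies the pointwise decay $R^3/|x - X_i|^2 \cdot o(1)$, which I integrate explicitly in spherical coordinates.

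Summing $p$-th powers over the $N$ disjoint balls, each of the three regimes contributes at most $(NR^{p + 3})^{1/p} o(1) = \phi^{1/p + 1/3} N^{-1/3} o(1)$, and since $1/p + 1/3 \ge 1$ for $p \in (1, 3/2]$ this is bounded by $\phi\, o(1)$. The case $p = 1$ is recovered from $p = 3/2$ by H\"older's inequality (using $|U| < \infty$); at the borderline $p = 3/2$ the logarithmic factor $\log(\delta/R) \lesssim 1 + \log(1/\phi)$ arising from the outer-annulus integration is absorbed by the explicit form $o(1) = \phi + \phi^{1/4} + R + R^\alpha$ of Lemma~\ref{lemma:nablav_diff}. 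The main obstacle is precisely the thin annulus $B_{2R}(X_i) \setminus B_i$: there Corollary~\ref{cor:dipole_diff} does not apply, and Corollary~\ref{cor:korn_poincare} on $B_{2R}(X_i)$ fails because $Q_iv - d_i$ does not satisfy the required boundary conditions on $\partial B_{2R}(X_i)$; it is the exterior maximum modulus principle (Lemma~\ref{lemma:max_mod}) that closes this gap.
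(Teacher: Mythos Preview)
Your proof is correct and follows essentially the same route as the paper's: reduce to the near region $\bigcup_i B_r(X_i)$ via Lemma~\ref{lemma:v1_u_diff}, discard the far dipoles $\sum_{j\neq i}(Q_jv-d_j)$ by Corollary~\ref{cor:dipole_diff} and Lemma~\ref{lemma:sum}, then control the single term $Q_iv-d_i$ near particle $i$ by combining the Korn--Poincar\'e inequality (Corollary~\ref{cor:korn_poincare}) with the oscillation bound of Lemma~\ref{lemma:nablav_diff} inside $B_i$, the exterior maximum modulus principle (Lemma~\ref{lemma:max_mod}) to pass through the layer $B_{2R}(X_i)\setminus B_i$, and the explicit dipole decay of Corollary~\ref{cor:dipole_diff} on $B_r(X_i)\setminus B_{2R}(X_i)$, with the logarithmic factor at $p=\tfrac32$ absorbed by the explicit rate in Lemma~\ref{lemma:nablav_diff}. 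The only organizational differences are that you split $B_{2R}(X_i)$ into ball and thin annulus (the paper treats them together via Lemma~\ref{lemma:max_mod} directly), and that you recover $p=1$ from $p=\tfrac32$ by H\"older's inequality, which is a clean way to avoid the restriction $p>1$ in Corollary~\ref{cor:korn_poincare} that the paper leaves implicit.
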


\begin{proof}
 First of all note that by Lemma \ref{lemma:v1_u_diff} we only need to prove the statement for $U=\cup_{i=1}^N B_r(X_i)$. Let $x\in U$, then $x\in B_r(X_i)$ for one and only one $i$ because 
 of Assumption \eqref{assump2}. By the proof of Theorem \ref{lemma:v1_u_diff} we only need to consider the dipole $d_i-Q_iv$ because the other dipoles behave exactly the same as outside $B_r(X_i)$, giving $L^\infty$ estimates. If $x \in B_{2R}(X_i)$, by the maximum modulus theorem from \cite{MRS99} plus corollary \ref{cor:korn_poincare} as well as Lemma \ref{lemma:nablav_diff}:
  \begin{align*}
  \abs{Q^S_iv(x)-d^S_i(x)}&\le C R\abs{e Q^P_iv(x)-e d^P_i(x)}=R\abs{e v(x)-e v(X_i)}\\
  &\le R\abs{\nabla v(x)-\nabla v(X_i)}\le R o(1).
 \end{align*}
By the computation done before the lemma this bounds $\norm{v_1-\tilde u}_{L^p(\cup_{i=1}^N B_{2R}(X_i))}$ by $\phi o(1)$. If $r=2R$ we are done. Otherwise observe that for $\abs{x-X_i}\in (2R,\delta)$ we can use Corollary \ref{cor:dipole_diff} and get
\begin{align*}
 \norm{v_1-\tilde u}&_{L^p(\cup B_{\delta}(X_i)\setminus B_{2R}(X_i))}\\
 &\le o(1)\bra{N \int_{2R}^{\delta}R^{3p}\frac{1}{\abs{x}^{2p}}\dx{x}}^{\frac 1 p}+o(1)\phi\le o(1)\bra{N \delta^{3-2p} R^{3p}}^{\frac 1 p}+o(1)\phi\\
 &\le o(1) N^\frac 1 p R^3+o(1)\phi\le\phi o(1).
\end{align*}
This was the computation for $p\neq \frac 3 2$. If $p= \frac 3 2$ we get 
\begin{align*}
 \norm{v_1-\tilde u}_{L^p(\cup B_{\delta}(X_i)\setminus B_{2R}(X_i))}&\le C  \bra{\phi^\frac 1 4 +R^\alpha+R} N^\frac 1 p R^3 \bra{\log \delta-\log 2R} \\
 &=\phi N^{-\frac 1 3} \bra{-\log R}  \bra{\phi+\phi^\frac 1 4 +R^\alpha+R}=\phi o(1).
\end{align*}
Here we used $-N^{-\frac 1 3}\phi^{\frac 1 4}\log R\le C R^{-\frac 1 2} N^{-\frac 1 3}\phi^{\frac 1 4}\le C \phi^{-\frac 1 6}N^{-\frac 1 6}\phi^{\frac 1 4}=o(1)$.
\end{proof}

\section{Homogenization}\label{sec:homogenization}

\subsection{From the microscopic approximation to a homogenized equation}\label{subsec:hom}

The solution $v$ to the reference problem without particles involves the individual particles on the right hand side and is therefore not in a good form for the treatment of the limit problem. Therefore we first prove that $v$ is close to the solution $\hat{v}$ of the following problem:
\begin{align}
 -\Delta \hat{v}+\nabla p&=(1-\phi \rho)f \mbox{ in } \R^3,\label{eq:hvS1}\\
 \Div \hat{v}&=0 \mbox{ in } \R^3,\label{eq:hvS2}\\
 \hat{v}(x)&\to 0 \quad \text{as }\abs{x}\to \infty,\label{eq:hvS3}
\end{align}

\begin{lemma}\label{lemma:v_diff}
 For the solution $v$ of problem \eqref{eq:v1}-\eqref{eq:v3} and the solution $\hat{v}$ of problem \eqref{eq:hvS1}-\eqref{eq:hvS3} it holds that
 \begin{align*}
  \norm{v-\hat{v}}_{W^{1,\infty}(\Omega^N_\delta)}\le\phi o(1).
 \end{align*}
 Let $U\subset \R^3$ be of finite measure and $p\in [1,3]$. Then
  \begin{align*}
  \norm{v-\hat{v}}_{L^p(U)}\le\phi o(1).
 \end{align*}
\end{lemma}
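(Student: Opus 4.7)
The plan is to use the explicit representation
\begin{equation*}
v - \hat v = \Phi * g, \qquad g := (\phi \rho - \one_{\cup_i B_i}) f,
\end{equation*}
and to exploit two independent cancellation mechanisms built into the coarse-grained density by splitting $g = g_1 + g_2$ with
\begin{equation*}
g_1 := \phi(\rho - \rho^N) f, \qquad g_2 := (\phi \rho^N - \one_{\cup_i B_i}) f.
\end{equation*}
The piece $g_1$ is small because of the weak-$*$ convergence $\rho^N \rightharpoonup \rho$, while $g_2$ is small in an appropriate dual norm because $\int_{A_j} g_2 \dx{y} = 0$ on every coarse-graining cube up to an $O(R^3)$ correction from particles straddling $\partial A_j$, negligible since $R \ll s^N$.

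For the $W^{1,\infty}(\Omega^N_\delta)$ bound applied to $g_1$, I would use that for every fixed $x$ the pairing $\Phi * g_1(x) = \phi \int \Phi(x-y)(\rho - \rho^N)(y)f(y)\dx{y}$ tends to zero by weak-$*$ convergence against the $L^1_\loc$ test function $\Phi(x-\cdot)f$ (supported in $B_{L+1}(0)$); equicontinuity of this family in $L^1_\loc$ uniformly for $x\in \Omega^N_\delta$ then promotes this to uniform convergence, so $\norm{\Phi * g_1}_{L^\infty(\Omega^N_\delta)} = \phi o(1)$, and the gradient bound follows by the same argument with $\nabla \Phi \in L^q_\loc$ for $q<3/2$. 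For $g_2$ I would subtract the cube-centre value on each $A_j$ and use the Lipschitz bound for $\Phi$ away from the origin together with the H\"older continuity of $f$ to get
\begin{equation*}
\abs{\Phi(x-y)f(y) - \Phi(x-y_j)f(y_j)} \ls \frac{s^N}{\abs{x-y_j}^2} + \frac{(s^N)^\alpha}{\abs{x-y_j}}
\end{equation*}
for cubes with $\abs{x-y_j}\ge 2 s^N$. Combining this with $\norm{g_2}_{L^1(A_j)} \ls \phi (s^N)^3$ and summing over $A_j$ inside the compact support of $\rho$ contributes $\phi[s^N + (s^N)^\alpha] = \phi o(1)$. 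The near-field cubes are handled directly: for $x \in \Omega^N_\delta$ each particle lies at distance at least $\delta$ from $x$, so each contributes $\int_{B_i} \abs{\Phi(x-\cdot)} \ls R^3/\delta$, and counting at most $CN(s^N)^3$ centres in $B_{3s^N}(x)$ gives a total of $\phi (s^N)^3/\delta = \phi o(1)$ under the assumptions $s^N\log N \to 0$ and $(\delta^N)^{-2} N^{-1}\to 0$.

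For the $L^p(U)$ bound with $p \in [1,3]$ I would recast the same splitting in dual-norm language: Rellich compactness upgrades $\rho - \rho^N \rightharpoonup 0$ in $L^q(B_{L+1})$ to strong convergence in $W^{-1,q}(B_{L+1})$, while pairing $g_2$ cube-by-cube against Lipschitz test functions gives $\norm{g_2}_{\dot W^{-1,\infty}(B_{L+1})} \ls \phi s^N$. Elliptic regularity $\Phi : \dot W^{-1,q} \to \dot W^{1,q}$ composed with Sobolev embedding then yields the desired $L^p$ estimate. The main obstacle is the tight juggling of the scale hierarchy $R \ll \delta \ll s^N \ll 1/\log N$, which is needed simultaneously to absorb the $\Phi$-singularity near $x$, the H\"older-and-Lipschitz Taylor remainder of $\Phi(x-\cdot)f$, and the $\partial A_j$ correction from particles intersecting cube boundaries, all into a single $\phi \cdot o(1)$ bound.
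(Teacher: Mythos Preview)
Your decomposition $g=g_1+g_2$ and the cube-by-cube cancellation for $g_2$ is exactly the mechanism the paper uses; the paper just organizes it slightly differently by first replacing each $\one_{B_k}$ with the point mass $\tfrac{4\pi}{3}R^3\delta_{X_k}$ (which sidesteps your straddling correction) and then compares the point-mass sum to $\rho^N$ cube by cube. So the far-field part of your argument and the $g_1$ part are fine and match the paper.

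The genuine gap is your near-field bound. You write that the cubes within $3s^N$ of $x$ contribute at most $\phi\,(s^N)^3/\delta$ and call this $\phi\,o(1)$, but that does not follow from the assumptions: with, say, $\delta\sim N^{-1/2}$ and $s^N\sim(\log N)^{-2}$ one has $(s^N)^3/\delta\sim N^{1/2}(\log N)^{-6}\to\infty$. The error is that you estimate \emph{every} nearby particle by $R^3/\delta$, whereas only the single closest particle can be at distance $\delta$; all others are at distance $\gtrsim d\sim N^{-1/3}$ by the separation assumption. The paper handles this by singling out the closest particle, for which
\[
\Big|\int_{B_i}\Phi(x-y)f(y)\dx{y}\Big|\ \ls\ \frac{R^3}{\delta}\ =\ \frac{\phi}{N\delta}\ =\ \phi\,o(1)
\]
(using $N\delta^2\to\infty$), and then controlling the remaining nearby particles via Lemma~\ref{lemma:sum} applied with $N$ replaced by the count $\ls (s^N/d)^3$, giving
\[
\frac{1}{N}\sum_{\substack{k\neq i\\ |x-X_k|\le s^N}}\frac{1}{|x-X_k|}\ \ls\ \frac{1}{Nd}\Big(\frac{(s^N)^3}{d^3}\Big)^{2/3}\ \ls\ (s^N)^2\ =\ o(1).
\]
With this refinement your argument closes. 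The same correction is needed for the gradient, where the closest-particle term becomes $R^3/\delta^2=\phi/(N\delta^2)=\phi\,o(1)$, exactly the place where the hypothesis $(\delta^N)^{-2}N^{-1}\to 0$ is used.

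A smaller point: in your $L^p$ route, asserting that multiplication by $f$ preserves strong $W^{-1,q}$ convergence needs $f$ to be a multiplier on that space, which is not immediate from $f\in C^{0,\alpha}$ alone. The paper avoids this by simply rerunning the $L^\infty$ argument and isolating the one step (again the closest-particle term) that fails inside $B_r(X_i)$, then estimating it directly in $L^p$.
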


\begin{proof}
 The idea of the proof is to represent $v,\hat{v}$ in terms of the fundamental solution. Since there is a $\phi$ in front of $\rho$, and $\rho^N$ (see \ref{eq:rhoN}) is close to $\rho$ 
 (see assumption \eqref{assump4}) in a weak sense (when convoluted with the fundamental solution), we can interchange $\rho$ by $\rho^N$. For given $x\in \R^3$ we can ignore terms at regions that are close to $x$ since they are small anyway. For regions further away from $x$, the number density $\rho^N$ looks approximately like the rescaled sum of the characteristic functions of the particles.
 
 We write $v,\hat{v}$ by means of the fundamental solution to see that
 \begin{align*}
 \abs{v(x)-\hat{v}(x)}=\abs{\int_{\R^3} \bra{\sum_{k=1}^N \one_{B_k}(y)-\phi\rho(y)}\Phi(x-y)f(y)\dx{y}}.
 \end{align*}
The proof comes in four parts. The first part shows that we can replace $\rho$ by $\rho^N$ and that particles close to $x$ can be ignored, the second part establishes the closeness of the functions in $L^\infty$ while the third part is concerned with the closeness of the gradients in $L^\infty$. In the last part the necessary $L^p$ estimates are shown.

\textbf{Part 1:} We can replace $\rho$ by $\rho^N$ since
\begin{align*}
  &\abs{\int_{\R^3} (\phi\rho^N(y)-\phi\rho(y))\Phi(x-y)f(y)\dx{y}}\\
  &= \phi \abs{\int_{B_{L+1}(0)} (\rho_N(y)-\rho(y))\Phi(x-y)f(y)\dx{y}}=\phi o(1),
\end{align*}
because $\rho^N\rightharpoonup \rho$ in $L^p(B_{L+1}(0))$ and $f\Phi(x-\cdot)\in L^q(B_{L+1}(0))$ where $q$ is the H\"older dual of $p$ and hence $q< \frac 3 2$.

Let $X_i$ be the closest centre point to $x$. Then we can ignore the $i$th term in the sum:
\begin{align*}
 \abs{\int_{\R^3} \one_{B_i}(y)\Phi(x-y)f(y)\dx{y}}\le C R^3 \frac{1}{\delta}\le C R^3 N^{\frac 1 2}=\phi N^{-\frac 1 2}=\phi o(1),
\end{align*}
and we can replace $\one_{B_k}$ by $\frac{4\pi}{3}R^3 \delta_{X_k}$ using Lemmas \ref{lemma:small_diff} and \ref{lemma:sum} as well as the H\"older continuity of $f$:
\begin{align*}
  &\abs{\int_{\R^3} \sum_{k\neq i} (\one_{B_k}(y)-\frac{4\pi}{3}R^3\delta_{X_k})\Phi(x-y)f(y)\dx{y}}\\
  &\le\sum_{k\neq i}\abs{\int_{B_k}\Phi(x-y)f(y)\dx{y}-\frac{4\pi}{3}R^3\Phi(x-X_k)f(X_k)}\\
  &=\sum_{k\neq i}\abs{\int_{B_k}\Phi(x-y)f(y)-\Phi(x-X_k)f(X_k)\dx{y}}\\
  &\le C\frac \phi N \sum_{k\neq i}\bra{\fint_{B_k}\abs{\Phi(x-y)f(y)-\Phi(x-X_k)f(y)}+\fint_{B_k}\abs{\Phi(x-X_k)f(y)-\Phi(x-X_k)f(X_k)}\dx{y}}\\
  &\le C \frac \phi N\sum_{k\neq i}\bra{\norm{f}_{L^\infty}\frac{R}{\abs{x-X_k}^2}+\frac{1}{\abs{x-X_k}}[f]_{C^{0,\alpha}}R^\alpha}\le C \phi (R+R^\alpha)=\phi o(1).
\end{align*}
Therefore, what is left to show is
\begin{align*}
  \abs{\frac 1 N\sum_{k\neq i} \frac{4\pi}{3}\Phi(x-X_k)f(X_k)-\int_{\R^3} \rho^N(y)\Phi(x-y)f(y)\dx{y}}= o(1).
\end{align*}
We can ignore the contributions by particles in the range $s$ :
\begin{align*}
 \abs{\int_{B_s(x)} \rho^N(y)\Phi(x-y)f(y)\dx{y}}&\le C \int_{B_s(x)} \frac{1}{\abs{x-y}}\dx{y}\le C s^2=o(1),\\
 \abs{\frac 1 N \sum_{k\neq i, \abs{x-X_k}\le s} \Phi(x-X_k)f(X_k)}&\le C \frac 1 N \sum_{k\neq i, \abs{x-X_k}\le s} \frac{1}{\abs{x-X_k}}\\
 &\le C \frac 1 N\frac 1 d \bra{\frac{s^3}{d^3}}^{\frac 2 3}\le C s^2=o(1).
\end{align*}
Here we used that $\rho^N$ is uniformly bounded and in the range $s$ there can only be a number of particles $\le C \frac{s^3}{d^3}$ and then applied Lemma \ref{lemma:sum} with $N=\frac{s^3}{d^3}$.

The above reasoning applies to particles in the range of $3s$ in the same way. This means we can ignore all cubes $A_j$ that intersect the boundary $\partial B_{s}(x)$ since they will be included in $B_{3s}(x)$ anyway. 

\textbf{Part 2:} Therefore estimating the difference above reduces to estimating the difference of appropriately grouped terms in the sum to its corresponding parts (the cube $A_j$) of the integral. This means we want to estimate
\begin{align*}
 \sum_{j:\dist(A_j,x)>s} \abs{\frac{4\pi}{3}  \frac{1}{Ns^3}\sum_{X_k\in A_j}\int_{A_j}f(X_k)\Phi(x-X_k)-f(y)\Phi(x-y)\dx{y}}=o(1). 
\end{align*}
Using H\"older-continuity of $f$ and Lemma \ref{lemma:small_diff} we have
\begin{align*}
 \abs{(f(X_k)-f(y))\Phi(x-X_k)}&\le C \frac{s^\alpha}{\abs{x-X_k}},\\
 \abs{f(y)\bra{\Phi(x-X_k)-\Phi(x-y)}}&\le C \frac{s}{\abs{x-X_k}^2}.
\end{align*}
Hence, using Lemma \ref{lemma:sum}:
\begin{align*}
 &\sum_{j:\dist(A_j,x)>s} \abs{\frac{4\pi}{3}  \frac{1}{Ns^3}\sum_{X_k\in A_j}\int_{A_j}f(X_k)\Phi(x-X_k)-f(y)\Phi(x-y)\dx{y}}\\
 &\le C\sum_{j:\dist(A_j,x)>s} \abs{\frac{4\pi}{3}  \frac{1}{N}\sum_{X_k\in A_j}\frac{s^\alpha}{\abs{x-X_k}}+\frac{4\pi}{3}  \frac{1}{N}\sum_{X_k\in A_j}\frac{s}{\abs{x-X_k}^2}}\\
 &\le C \frac{s^\alpha}{N}\sum_{k\neq i} \frac{1}{\abs{x-X_k}}+C\frac{s}{N}\sum_{k\neq i} \frac{1}{\abs{x-X_k}^2}\\
 &\le C \bra{s^\alpha+s}=o(1). 
\end{align*}
\textbf{Part 3:} In order to understand that the estimate holds for the gradient note that
 \begin{align*}
  \abs{\nabla v(x)-\nabla \hat{v}(x)}=\abs{\int_{\R^3} (\sum_{k=1}^N \one_{B_k}(y)-\phi\rho(y))\nabla \Phi(x-y)f(y)\dx{y}}.
 \end{align*}
We can now reproduce steps 1 and 2 from the proof above using the following facts i)-vi) in that order:
\\
\begin{enumerate}[leftmargin=*]
\item[i)] $\rho_N\rightharpoonup \rho$ in $L^p(B_{L+1}(0))$ and $f\nabla\Phi(x-\cdot)\in L^q(B_{L+1}(0))$, since $\frac 1 {\abs{x}^2}$ is $q$-integrable for $q<\frac 3 2$\\
\item[ii)] $\frac{1}{\delta^2N}\to 0$, and therefore
  \begin{align*}
   \abs{\int_{\R^3} \one_{B_i}(y)\nabla \Phi(x-y)f(y)\dx{y}}\le C R^3 \frac{1}{\delta^2}=R^3 \frac{1}{\delta^2N}N=\phi o(1).
  \end{align*}\\
\item[iii)] The appearing sums over third powers are well behaved:
  \begin{align*}
  \phi \frac 1 N\sum_{k\neq i} \frac{R}{\abs{x-X_k}^3}\dx{y}\le C \phi \log N R\le C \phi N^{\frac 1 3} R=\phi^{\frac 4 3} =\phi o(1);
  \end{align*}\\
\item[iv)] Terms is range $s$ can be disregarded:
 \begin{align*}
 \int_{B_{s}(x)} \frac{1}{\abs{x-y}^2}\dx{y}&\le C s=o(1),\\
 \frac 1 N \sum_{k\neq i, \abs{x-X_k}\le s} \frac{1}{\abs{x-X_k}^2}&\le C \frac 1 N\frac{1}{d^2} \bra{\frac{s_N^3}{d^3}}^{\frac 1 3}\le C s=o(1);
 \end{align*}\\
\item[v)] The sum over squares is a good as the sum over first powers:
 \begin{align*}
  \frac{s^\alpha}{N}\sum_{k\neq i} \frac{1}{\abs{x-X_k}^2}\le C s^\alpha=o(1); 
 \end{align*}
\item[vi)] The sum over the third powers can be controlled since $s$ approaches zero fast enough:
 \begin{align*}
  \frac{s}{N}\sum_{k\neq i} \frac{1}{\abs{x-X_k}^3}\le C s \log N=o(1). 
 \end{align*}
\end{enumerate}
This gives 
\begin{align*}
 \abs{\nabla v(x)-\nabla \hat{v}(x)}=\phi o(1).
\end{align*}

\textbf{Part 4:} In order to get the $L^p$ result simply notice that we can use the $L^\infty$ results everywhere even where $x\in B_r(X_i)$ as long as we did not use that $\abs{x-X_i}>r$. In fact this was used only once so that we have to look at the following term again when $x\in B_r(X_i)$:
\begin{align*}
 \abs{\int_{\R^3}\one_{B_i}(y)\Phi(x-y)f(y)\dx{y}}\le C \int_{B_i}\frac{1}{\abs{x-y}}\dx{y}.
\end{align*}
If $\abs{x-X_i}>2R$ then this is smaller than $CR^3\frac{1}{\abs{x-X_i}}$. If $\abs{x-X_i}\le 2R$ it scales like $R^2$. Integrating the $p$th power of the left hand side over the union of the $B_r(X_i)$ gives
\begin{align*}
 &\bra{\int_{\cup_{i=1}^N B_r(X_i)}\abs{\int_{\R^3}\one_{B_i}(y)\Phi(x-y)f(y)\dx{y}}^p\dx{x}}^\frac 1 p\\
 &\le C\bra{N \bra{R^{2p}R^3+R^{3p}\int_{2R}^{\delta}t^{-p+2}\dx{t}}}^\frac 1 p\le C \bra{N \bra{R^{2p+3}+R^{3p} \bra{\delta}^{-p+3}}}^\frac 1 p\\
 &\le C \bra{N^{\frac 1 p} R^{2+\frac 3 p}+R^3 N^{\frac 1 p}\delta^{-1+\frac 3 p}}\le\phi o(1).
\end{align*}
\end{proof}

Now we can establish the first closeness result for the solution of the homogenized equation.

\begin{lemma}\label{lemma:tu_hu_diffS}
 Let $\hat{u}$ be the solution to 

 \begin{align}
  -\Div\bra{\nabla \hat{u}+5 \phi \rho\, e\hat{v}}+\nabla p&=(1-\phi \rho)f,\label{eq:interm_eqS}\\
  \Div \hat{u}&=0.
 \end{align}
 and let $\tilde{u}$ be the explicit Stokes dipole approximation. Then we have
 \begin{align*}
  \norm{\tilde{u}-\hat{u}}_{L^\infty(\Omega^N_\delta)}\le\phi o(1). 
 \end{align*}
  Let $U\subset \R^3$ be of finite measure and $p\in [1,\frac 3 2]$. Then
  \begin{align*}
  \norm{\tilde{u}-\hat{u}}_{L^p(U)}\le\phi o(1).
 \end{align*}
\end{lemma}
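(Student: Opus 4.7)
The plan is to decompose
\[
\tilde u - \hat u = (v-\hat v) - \Bigl(\sum_{i=1}^N d_i - (\hat v - \hat u)\Bigr),
\]
so that Lemma \ref{lemma:v_diff} directly handles the first term, and the whole argument reduces to showing that the explicit dipole sum $\sum_i d_i$ agrees with the continuum quantity $\hat v - \hat u$ in the relevant norms, up to an error of size $\phi\, o(1)$.

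First I would derive a convolution representation for $\hat v - \hat u$. Subtracting the defining PDEs for $\hat u$ and $\hat v$ gives $-\Delta(\hat u - \hat v)+\nabla p = 5\phi\Div(\rho e\hat v)$; convolving with $\Phi$ and integrating by parts yields (with Einstein summation)
\[
(\hat u - \hat v)_j(x) = 5\phi\int_{\R^3}\partial_k\Phi_{ij}(x-y)\,\rho(y)(e\hat v)_{ki}(y)\,dy.
\]
Since $e\hat v$ is symmetric and traceless (because $\Div \hat v=0$), the identity $\epsilon_{ki}\partial_k\Phi_{ij}(x) = -\tfrac{3}{8\pi}x_jx_k\epsilon_{ki}x_i/|x|^5$ announced in Subsection \ref{subsec:heuristics} rewrites this as
\[
(\hat v - \hat u)_j(x) = \frac{15\phi}{8\pi}\int_{\R^3}\rho(y)\,\frac{(x-y)_j\bigl((x-y)\cdot e\hat v(y)(x-y)\bigr)}{|x-y|^5}\,dy,
\]
which is precisely the structure of the leading part of a dipole $d_i$, averaged against $\rho$.

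The core work is then to compare $\sum_i d_i$ to this integral. Splitting $d_i$ into its leading $R^3/|x-X_i|^2$ piece and its subleading $R^5/|x-X_i|^3$ piece, Lemma \ref{lemma:sum} bounds the total subleading contribution by $CR^5\log N/d^3\le CR^2\phi\log N=\phi\, o(1)$ uniformly on $\Omega_\delta^N$ (with an analogous $L^p$ bound), so it can be discarded. For the leading part I would mimic the proof of Lemma \ref{lemma:v_diff} step by step using the coarse-graining at scale $s^N$: (i) drop the contribution of the particle $X_i$ closest to $x$, which is bounded by $CR^3/\delta^2=\phi\, o(1)$ thanks to the $\delta$-cutoff; (ii) replace each evaluation $ev(X_k)$ in the sum by $ev(y)$ inside the containing cube $A_j$, paying an oscillation cost controlled by Lemmas \ref{lemma:small_diff} and \ref{lemma:nablav_diff}; (iii) pass from the discrete sum $\frac{1}{N}\sum_{X_k\in A_j}(\cdot)$ to the cube average against $\rho^N$, using the H\"older continuity of the integrand within each cube; (iv) replace $\rho^N$ by $\rho$ via the weak convergence \eqref{assump6} tested against the Stokes kernel, which lies in $L^q(B_{L+1})$ for $q<3/2$; (v) finally replace $ev$ by $e\hat v$ using Lemma \ref{lemma:v_diff}. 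The $L^p$ bound on a finite-measure $U$ is then obtained by combining the $L^\infty(\Omega_\delta^N)$ estimate with an explicit estimate on $\cup B_r(X_i)$ in the spirit of Lemma \ref{lemma:Lpdiff}: on each $B_r(X_i)$ the contribution from $X_i$ is controlled via Corollary \ref{cor:korn_poincare} and Lemma \ref{lemma:nablav_diff}, while the tail of dipoles from other particles together with the continuum integral is uniformly bounded by the $L^\infty$ argument, and the sum over $i$ is $\phi\, o(1)$ for $p\le 3/2$ as in the calculation preceding Lemma \ref{lemma:Lpdiff}.

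The main obstacle will be step (iv): the kernel $(x-y)/|x-y|^5$ is only marginally integrable, so the auxiliary sums $\sum_{j\neq i}|x-X_j|^{-3}$ appearing at intermediate stages come with the logarithmic factor of Lemma \ref{lemma:sum}, and both the coarse-graining scale condition $s^N\log N\to 0$ and the assumption $\phi\log N\to 0$ must be used carefully---exactly as in the proof of Lemma \ref{lemma:v_diff}---to ensure that all errors sum to $\phi\, o(1)$ rather than $\phi\log N\cdot o(1)$. The comparison must also be carried out uniformly in the base point $x$, which forces the $\delta$-cutoff in the $L^\infty$ statement and the restriction $p\le 3/2$ in the $L^p$ statement.
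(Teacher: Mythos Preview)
Your proposal is correct and follows essentially the same route as the paper: the paper also writes $\hat u$ via the fundamental solution, reduces to comparing $\sum_k d_k$ with the continuum integral $\int 5\phi\rho\,e\hat v\,\nabla\Phi$, replaces $e\hat v$ by $ev$ via Lemma \ref{lemma:v_diff}, discards the closest dipole and the fast-decaying $R^5$ parts, computes $\epsilon_{ki}\partial_k\Phi_{ij}$ explicitly, and then runs the coarse-graining argument of Lemma \ref{lemma:v_diff} (replace $\rho$ by $\rho^N$, ignore particles within range $s$, and compare sum to integral cube by cube using Lemmas \ref{lemma:small_diff} and \ref{lemma:nablav_diff}), with the $L^p$ statement handled exactly as you describe. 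Two small slips to fix: the subleading part of $d_i$ decays like $R^5/|x-X_i|^4$, not $R^5/|x-X_i|^3$ (so the relevant sum is \eqref{eq:sum7}, giving $R^5/d^4\le R\phi^{4/3}$), and the $\sum_{j\neq i}|x-X_j|^{-3}$ terms with the $\log N$ factor actually arise in your step (iii) (the cube-by-cube comparison via Lemma \ref{lemma:small_diff}), not in the weak-convergence step (iv).
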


\begin{proof}
 In principle we employ the same strategy as in the proof of Lemma \ref{lemma:v_diff}. We represent $\hat{u}$ in terms of the fundamental solution and then we use Lemma \ref{lemma:v_diff} as well as $\rho^N\rightharpoonup \rho$ to show, that the difference of the sum and the integral is small. 
 
We write $\hat{u}$ (componentwise) in terms of the fundamental solution:
 \begin{align}\label{eq:hatu_formulaS}
  \hat{u}_j(x)=\hat{v}_j(x)+\int_{\R^3} 5 \phi \rho(y) e\hat{v}(y)_{ki}\partial_k \Phi_{ij}(x-y)\dx{y}.
 \end{align}
 In order to show the closeness of $\tilde{u}$ to the representation from \eqref{eq:hatu_formulaS} let $x\in \Omega^N_\delta$ be given. We have
\begin{align*}
  \abs{\tilde{u}_j(x)-\hat{u}_j(x)}\le \abs{v_j(x)-\hat{v}_j(x)}+\abs{-\sum_{k=1}^N {d_k}_j(x)-\int_{\R^3} 5\phi\rho(y)e \hat{v}(y)_{ki}\partial_k \Phi_{ij}(x-y)\dx{y}}.
 \end{align*}
 Where ${d_k}_j$ is the $j$-th component of the explicit dipole at particle $k$ (see \eqref{eq:di}) and \emph{not} the distance. Taking into account Lemma \ref{lemma:v_diff} it remains to prove that
 \begin{align*}
  \abs{-\sum_{k=1}^N d_k(x)-\int_{\R^3} 5\phi\rho(y)e \hat{v}(y)\nabla\Phi(x-y)\dx{y}}= \phi o(1).
 \end{align*}
 The proof is divided in several parts. The first part shows that we can replace $\hat{v}$ by $v$. In the second part we show that the closest particle as well as the fast decaying parts of the dipoles can be ignored. The next part determines the explicit form of the gradient of the fundamental solution when applied to a symmetric, trace-free matrix. In the fourth part it is shown that we can replace $\rho$ by $\rho^N$ and ignore close particles. The fifth part establishes the closeness of the functions in $L^\infty$ while the last part is concerned with the $L^p$ result. 
 
\textbf{Part 1:} Using Lemma \ref{lemma:v_diff} we have 
 \begin{align*}
  &\abs{\int_{\R^3} 5\phi\rho(y)(ev(y)-e\hat{v}(y))\nabla\Phi(x-y)\dx{y}}\\
  &=\abs{\int_{B_L(0)\cap \Omega^N_\delta} 5\phi\rho(y)(ev(y)-e\hat{v}(y))\nabla\Phi(x-y)\dx{y}}\\
  &+\abs{\int_{B_L(0)}\sum_{k=1}^N \one_{B_{\delta}}(y) 5\phi\rho(y)(ev(y)-e\hat{v}(y))\nabla\Phi(x-y)\dx{y}}\\
  &\ls \phi^2 o(1)\int_{B_L(0)\cap \Omega^N_\delta} \frac{1}{\abs{x-y}^2}\dx{y}+C\phi \norm{\nabla \Phi}_{L^{\frac 4 3}(B_L(0))}\norm{\sum_{k=1}^N \one_{B_r}}_{L^4}\\
  &\le\phi^2 o(1)+C\phi \bra{N(\delta)^3}^{\frac 1 4}=\phi(\phi o(1)+o(1))=\phi o(1).
 \end{align*}
 Therefore it suffices to prove
  \begin{align*}
  \abs{-\sum_{k=1}^N d_k(x)-\int_{\R^3} 5\phi\rho(y)e v(y)\nabla\Phi(x-y)\dx{y}}= \phi o(1).
 \end{align*}
 
\textbf{Part 2:} Let $X_i$ be the closest centre point to $x$. Then we can ignore the $i$th term in the sum:
 \begin{align*}
  \abs{d_i(x)}\le C \frac{R^3}{\abs{x-X_i}^2}+C\frac{R^5}{\abs{x-X_i}^4}\le C R^3\delta^{-2}=CR^3 N \frac{1}{N\delta^2}=\phi o(1).
 \end{align*}
 Next we look at the fast decaying terms of $d_k$:
 \begin{align*}
  &\sum_{k\neq i} R^5\bra{\frac{ev(X_k)(x-X_k)}{\abs{x-X_k}^5}-\frac 5 2  \frac{(x-X_k)\bra{(x-X_k)\cdot ev(X_k)(x-X_k)}}{\abs{x-X_k}^7}}\\
  &\le C \sum_{k\neq i} \frac{R^5}{\abs{x-X_k}^4}\le C R^5 N^{\frac 4 3}=R\phi^{\frac 4 3}=\phi o(1).
 \end{align*}
 Thus we can ignore these terms and only consider the slowly decaying terms of the form 
 \begin{align*}
  \frac 5 2 R^3\bra{\frac{(x-X_k)\bra{(x-X_k)\cdot ev(X_k)(x-X_k)}}{\abs{x-X_k}^5}}.
 \end{align*}

\textbf{Part 3:} We now derive an expression for $ev(y)\nabla\Phi(x-y)$. We compute the derivative of the Oseen-Tensor to be 
\begin{align*}
 \partial_k \Phi_{ij}(x)&=\frac{1}{8\pi}\bra{-\frac{\delta_{ij}x_k}{\abs{x}^3}+\frac{\delta_{ik}x_j+\delta_{jk}x_i}{\abs{x^3}}-3\frac{x_ix_jx_k}{\abs{x}^5}}.
\end{align*}
Take any symmetric, trace free matrix $\epsilon$. Then
\begin{align*}
 (\epsilon \nabla\Phi)_j&\coloneqq\epsilon_{ki}\partial_k \Phi_{ij}(x)=\frac{1}{8\pi}\bra{-\frac{\epsilon_{ki}x_k}{\abs{x}^3}+\frac{\epsilon_{kk}x_j+\epsilon_{ij}x_i}{\abs{x^3}}-3\frac{\epsilon{ki}x_ix_jx_k}{\abs{x}^5}}\\
 &=-\frac{3}{8\pi}\frac{\epsilon{ki}x_ix_jx_k}{\abs{x}^5}=-\frac{3}{8\pi}\bra{\frac{x\bra{x\cdot \epsilon x}}{\abs{x}^5}}_j.
\end{align*}
Let us replace $R^3=\phi \frac{1}{N}$. Then we are left to show that
\begin{align*}
  &\left\lvert\phi \frac{1}{N}\sum_{k\neq i} \frac 5 2 \frac{(x-X_k)\bra{(x-X_k)\cdot ev(X_k)(x-X_k)}}{\abs{x-X_k}^5}\right.\\
  &\left.-\phi \int_{\R^3} \frac{15}{8\pi}\rho(y)\frac{(x-y)\bra{(x-y)\cdot ev(y) (x-y)}}{\abs{x-y}^5}\dx{y}\right\rvert= \phi \cdot  o(1).
\end{align*}

\textbf{Part 4:} With the same type of argument as in Part 1 of the proof of \ref{lemma:v_diff} we can replace $\rho$ by $\rho^N$, we can leave out a ball of size $s$ around $x$ in the integral and we can ignore the parts of the sum where $\abs{X_k-x}\le s$.
It remains to show:
\begin{align*}
  &\frac{1}{N}\sum_{k:\abs{x-X_k}> s} \frac{(x-X_k)\bra{(x-X_k)\cdot ev(X_k)(x-X_k)}}{\abs{x-X_k}^5}\\
  &-\int_{\R^3\setminus B_{s}(x)} \frac{3}{4\pi}\rho^N(y)\frac{(x-y)\bra{(x-y)\cdot ev(y) (x-y)}}{\abs{x-y}^5}\dx{y}= o(1).
  \end{align*}
We can employ the same reasoning as above to exclude all particles in the range of $3s$. This means we can ignore all cubes $A_j$ that intersect the boundary $\partial B_{s}(x)$ since they will eventually be included in $B_{3s}(x)$ anyway. 

\textbf{Part 5:} Therefore estimating the difference above reduces to estimating the difference of appropriately grouped terms in the sum to its corresponding parts (the cube $A_j$) of the integral. I.e.~we need to estimate 
\begin{align*}
  &\sum_{j:\dist(A_j,x)>s} |\frac{1}{N} \sum_{X_k\in A_j}\frac{(x-X_k)\bra{(x-X_k)\cdot ev(X_k)(x-X_k)}}{\abs{x-X_k}^5}\\
  &-\int_{A_j} \frac{3}{4\pi}\rho^N(y)\frac{(x-y)\bra{(x-y)\cdot ev(y) (x-y)}}{\abs{x-y}^5}\dx{y}|= o(1).
\end{align*}
Notice that the cubes with $A_j\cap B_{2L}(0)=\emptyset$ have no contribution since there $\rho^N=0$. 

Looking at one term of the sum we are left to estimate
\begin{align*}
 &\left\lvert\frac{1}{N} \sum_{X_k\in A_j}\frac{(x-X_k)\bra{(x-X_k)\cdot ev(X_k)(x-X_k)}}{\abs{x-X_k}^5}\right.\\
 &\left.-\int_{A_j} \frac{3}{4\pi}\rho^N(y)\frac{(x-y)\bra{(x-y)\cdot ev(y) (x-y)}}{\abs{x-y}^5}\dx{y}\right\rvert.
\end{align*}
We now use the definition of $\rho^N$ to write this as
\begin{align*}
&\left\lvert\frac{1}{N}\sum_{X_k\in A_j}\frac{(x-X_k)\bra{(x-X_k)\cdot ev(X_k)(x-X_k)}}{\abs{x-X_k}^5}\right.\\
&\left.-\int_{A_j}\frac{3}{4\pi}\frac{1}{N s^3}\frac{4\pi}{3}n(A_k) \frac{(x-y)\bra{(x-y)\cdot ev(y) (x-y)}}{\abs{x-y}^5}\dx{y}\right\rvert\\
=& \frac{1}{N}\left\lvert\sum_{X_k\in A_j}\left(\frac{(x-X_k)\bra{(x-X_k)\cdot ev(X_k)(x-X_k)}}{\abs{x-X_k}^5}\right.\right.\\
&\left.\left.-\frac{1}{s^3}\int_{A_j} \frac{(x-y)\bra{(x-y)\cdot ev(y) (x-y)}}{\abs{x-y}^5}\dx{y}\right)\right\rvert\\
\le & C \frac{1}{Ns^3}\sum_{X_k\in A_j}\left\lvert\int_{A_j} \frac{(x-X_k)\bra{(x-X_k)\cdot ev(X_k)(x-X_k)}}{\abs{x-X_k}^5}\right.\\
&\left.-\frac{(x-y)\bra{(x-y)\cdot ev(y) (x-y)}}{\abs{x-y}^5}\dx{y}\right\rvert. 
 \end{align*}
We can replace $ev(y)$ by $ev(X_k)$ in the integral since for the difference, by Lemma \ref{lemma:nablav_diff}, we have:
\begin{align*}\
 \abs{ev(y)-ev(X_k)}\le \abs{\nabla v(y)-\nabla v(X_k)}\le C\bra{s^\alpha+s+\phi^{\frac 1 4}}=o(1),
\end{align*}
and hence
\begin{align*}
&\left\lvert\int_{A_j}\frac{(x-y)\bra{(x-y)\cdot ev(X_k) (x-y)}}{\abs{x-y}^5}\dx{y}\right.\\
&\left.-\int_{A_j}\frac{(x-y)\bra{(x-y)\cdot ev(y) (x-y)}}{\abs{x-y}^5}\dx{y}\right\rvert\le o(1)\int_{A_j} \frac{1}{\abs{x-y}^2}\dx{y}.
\end{align*}
Since the number of particles in one $A_j$ is bounded by $Ns^3$, adding this up we obtain
\begin{align*}
 &\sum_{j:\dist(A_j,x)>s}\frac{1}{Ns^3}\sum_{X_k\in A_j}\left\lvert \int_{A_j}\frac{(x-y)\bra{(x-y)\cdot ev(X_k) (x-y)}}{\abs{x-y}^5}\dx{y}\right.\\
 &\left.-\int_{A_j}\frac{(x-y)\bra{(x-y)\cdot ev(y) (x-y)}}{\abs{x-y}^5}\dx{y}\right\rvert\\
 &\le \sum_{j:\dist(A_j,x)>s, n(A_j)\neq 0} o(1) \int_{A_j}\frac{1}{\abs{x-y}^2}\dx{y}\le o(1) \int_{B_{L+s}(0)}\frac{1}{\abs{x-y}^2}\dx{y}\le o(1).
\end{align*}
 Using Lemma \ref{lemma:small_diff} for $y,X_k\in A_j$, we obtain
\begin{align*}
 &\frac{1}{Ns^3}\sum_{X_k\in A_j}\left\lvert\int_{A_j} \frac{(x-X_k)\bra{(x-X_k)\cdot ev(X_k)(x-X_k)}}{\abs{x-X_k}^5}\right.\\
 &\left.-\frac{(x-y)\bra{(x-y)\cdot ev(X_k)(x-y)}}{\abs{x-y}^5}\right\rvert\\
 &\le C \frac{1}{Ns^3}\sum_{X_k\in A_j}\abs{\int_{A_j} \norm{\nabla v}_{L^\infty}\frac{s}{\abs{x-X_k}^3}}\le C \frac{s}{N}\sum_{X_k\in A_j}\frac{1}{\abs{x-X_k}^3}.
\end{align*}
Summing up over $j$ gives
\begin{align*}
 &\sum_{j:\dist(A_j,x)>s} \frac{1}{Ns^3}\sum_{X_k\in A_j}\abs{\int_{A_j} \nabla v(X_k)\bra{\frac{x-X_k}{\abs{x-X_k}^3} -\frac{x-y}{\abs{x-y}^3}}}\\
 &\le C \frac{s}{N}\sum_{k\neq i}\frac{1}{\abs{x-X_k}^3}\le C \frac{s}{N}\frac{\log N}{d^3}\le C s\log N=o(1).
\end{align*}

\textbf{Part 6:} As in Part 4 of the proof of Lemma \ref{lemma:v_diff} we can use the $L^\infty$ results everywhere even where $x\in B_r(X_i)$ as long as we do not use that $\abs{x-X_i}>r$. This was only used once so that we have to look at $d_i(x)$ again when $x\in B_r(X_i)$: If $\abs{x-X_i}>R$, this is smaller than $CR^3\frac{1}{\abs{x-X_i}^2}$. If $\abs{x-X_i}\le R$ it scales like $R$. Integrating the $p$th power of this over the union of the $B_r(X_i)$ gives
\begin{align*}
 \bra{\sum_{i=1}^N \int_{B_r(X_i)}\abs{d_i(x)}^p\dx{x}}^\frac 1 p&\le C\bra{N \bra{R^pR^3+R^{3p}\int_{R}^{dN^{-\beta}}t^{-2p+2}\dx{t}}}^\frac 1 p\\
 &\le C \bra{N \bra{R^{p+3}+R^{3p} \bra{dN^{-\beta}}^{-2p+3}}}^\frac 1 p\\
 &\le  CN^{\frac 1 p} R^{1+\frac 3 p}+CR^3 N^{\frac 2 3 +2\beta-\frac 3 p \beta}\le\phi o(1).
\end{align*}
\end{proof}

\subsection{Passage to the Stokes equation with variable viscosity}\label{subsec:passage}

In order to obtain the final result we want to replace the $\hat{v}$ in equation \eqref{eq:interm_eqS} by $\hat{u}$. First we establish a regularity lemma: 

\begin{lemma}\label{lemma:reg_lemma}
 There is a constant $C>0$ such that the following holds. Let $g\in L^2(\R^3)$ be compactly supported in $B_{2L}(0)$. Let $w\in \dHs$ solve
 \begin{align*}
  -\Delta w+\nabla p&=g \mbox{ in } \R^3,\\
  \Div w&=0 \mbox{ in } \R^3.
 \end{align*}
  Then, $w\in L^\infty(\R^3)$ and $\norm{w}_{L^\infty(\R^3)}\le C\norm{g}_{L^2(\R^3)}$.
\end{lemma}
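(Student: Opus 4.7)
The plan is to use the explicit representation $w = \Phi \ast g$ from the fundamental solution formula given in the preliminaries, together with the fact that $g$ is compactly supported, and bound the convolution pointwise by Hölder's inequality in $L^2$--$L^2$. The key observation is that although $\Phi(x)$ behaves like $1/\abs{x}$ and is therefore not in $L^2(\R^3)$, the function $\Phi(x-\cdot)$ restricted to a fixed compact set is in $L^2$ with a bound that is uniform in $x$.

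More concretely, I would write, for every $x\in \R^3$,
\begin{equation*}
 \abs{w(x)}=\abs{\int_{B_{2L}(0)}\Phi(x-y)g(y)\dx{y}}\le \norm{\Phi(x-\cdot)}_{L^2(B_{2L}(0))}\norm{g}_{L^2(\R^3)},
\end{equation*}
and then control $M(x):=\norm{\Phi(x-\cdot)}_{L^2(B_{2L}(0))}$ uniformly in $x$ by splitting into two cases. For $\abs{x}\le 4L$, I would use $\abs{\Phi(z)}\le C/\abs{z}$ together with a change of variables to get
\begin{equation*}
 M(x)^2\le C\int_{B_{2L}(0)}\frac{1}{\abs{x-y}^2}\dx{y}\le C\int_{B_{6L}(0)}\frac{1}{\abs{z}^2}\dx{z}\le CL,
\end{equation*}
which is finite since $1/\abs{z}^2$ is locally integrable in $\R^3$. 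For $\abs{x}>4L$ and $y\in B_{2L}(0)$ one has $\abs{x-y}\ge \abs{x}/2$, hence $\abs{\Phi(x-y)}\le C/\abs{x}\le C/L$, so $M(x)^2\le C L$ as well. Combining both cases yields $\sup_x M(x)\le C$ with $C$ depending only on $L$.

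Substituting back gives $\norm{w}_{L^\infty(\R^3)}\le C\norm{g}_{L^2(\R^3)}$. The only point that requires mild care is the justification that the weak solution $w\in \dHs$ coincides with $\Phi\ast g$; this is standard and is in fact recorded in Subsection~\ref{subsec:notation}, where the solution operator $S^{-1}$ is identified with convolution against the Oseen tensor. The argument is not difficult and I do not expect any real obstacle.
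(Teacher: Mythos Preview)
Your proposal is correct and follows essentially the same argument as the paper: representation via $w=\Phi\ast g$, the pointwise bound $\abs{\Phi(z)}\le C/\abs{z}$, and H\"older's inequality in $L^2$--$L^2$ together with the uniform $L^2$-bound of $1/\abs{x-\cdot}$ on a fixed ball. The only difference is cosmetic---the paper handles the uniform-in-$x$ bound in a single line via a change of variables rather than your explicit case split, but the content is identical.
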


\begin{proof}
  We apply the fundamental solution to write
 \begin{align*}
  \abs{w(x)}&=\abs{\int_{R^3} \Phi(x-y)g(y)\dx{y}}=\abs{\int_{B_L(0)} \Phi(x-y)g(y)\dx{y}}\\
  &\le C \int_{B_L(0)} \frac{1}{\abs{x-y}}\abs{g(y)}\dx{y}\le C \norm{g}_{L^2}\norm{\frac{1}{\abs{y}}}_{L^2(B_{2L}(x))}\le C \norm{g}_{L^2}.
 \end{align*}
\end{proof}

Now we establish existence and estimates for the final equation:

\begin{lemma}
 There is a constant $C>0$ such that the following holds. The equation
 \begin{align*}
  -\Div\bra{\bra{2+5\phi \rho}e\bar{u}}+\nabla p&=(1-\phi \rho)f \mbox{ in } \R^3\\
  \Div \bar{u}&=0 \mbox{ in } \R^3,
 \end{align*}
 has a solution in $\dHs$ and for small $\phi$ we have $\norm{\nabla \bar{u}}_{L^2}\le \norm{f}_{L^\frac{6}{5}}$. Moreover, the gradient of the solution satisfies $\nabla \bar{u}\in H^1(\R^3)$. The estimate for $\nabla^2 \bar u$ is given by
 \begin{align*}
  \norm{\nabla^2 \bar u}_{L^2}\le \bra{ \phi \norm{\nabla \rho}_{L^\infty}\norm{f}_{L^{\frac 6 5}}+\bra{1+\phi\norm{\rho}_{L^\infty}}\norm{f}_{L^2}}.
 \end{align*}
\end{lemma}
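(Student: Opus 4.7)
The plan is to solve the equation as a perturbation of the standard Stokes system. Existence in $\dHs$ and the first-order estimate on $\nabla\bar u$ will come from Lax--Milgram applied to the naturally associated coercive bilinear form, while the second-order estimate will follow by rearranging the equation as $-\Delta\bar u + \nabla p = g$ with $g\in L^2$ and applying standard Stokes regularity, absorbing a term proportional to $\phi\|\nabla^2\bar u\|_{L^2}$ for $\phi$ small.

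For existence, I would consider on $\dHs\times\dHs$ the bilinear form $a(u,v) = \int_{\R^3}(2+5\phi\rho)\,eu:ev\dx{x}$, which is continuous since $\rho\in L^\infty$ and coercive with $a(u,u)\geq \|\nabla u\|_{L^2}^2$ (using $\rho\geq 0$ and the identity $\int|\nabla u|^2 = 2\int|eu|^2$ for divergence-free $u$). The right-hand side $v\mapsto \int(1-\phi\rho)f\cdot v\dx{x}$ is a continuous functional on $\dHs$ thanks to $f\in L^{6/5}$ and the Sobolev embedding $\dH\hookrightarrow L^6$. Lax--Milgram then produces a unique solution, and the pressure is recovered by Lemma V.1.1 of \cite{Gal94} as discussed in Subsection \ref{subsec:notation}. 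Testing the weak formulation with $\varphi=\bar u$ yields
\[\|\nabla\bar u\|_{L^2}^2 \leq a(\bar u,\bar u) \leq (1+\phi\|\rho\|_\infty)\,\|f\|_{L^{6/5}}\|\bar u\|_{L^6} \leq C\,(1+\phi\|\rho\|_\infty)\,\|f\|_{L^{6/5}}\|\nabla\bar u\|_{L^2},\]
whence $\|\nabla\bar u\|_{L^2}\leq C\|f\|_{L^{6/5}}$ for $\phi$ small.

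For the $H^2$ estimate, I would expand $\Div((2+5\phi\rho)e\bar u) = (1+\tfrac{5\phi\rho}{2})\Delta\bar u + 5\phi(\nabla\rho)\cdot e\bar u$ using $\Div\bar u=0$, move the variable-coefficient correction to the right-hand side, and read the result as a standard Stokes system
\[-\Delta\bar u + \nabla p \;=\; (1-\phi\rho)f + 5\phi\,(\nabla\rho)\cdot e\bar u + \tfrac{5\phi\rho}{2}\Delta\bar u \;=:\; g.\]
Standard $H^2$ regularity for the Stokes equation on $\R^3$ yields $\|\nabla^2\bar u\|_{L^2}\leq C\|g\|_{L^2}$, while
\[\|g\|_{L^2}\;\leq\; (1+\phi\|\rho\|_\infty)\|f\|_{L^2} + 5\phi\|\nabla\rho\|_\infty\|\nabla\bar u\|_{L^2} + C\phi\|\rho\|_\infty\|\nabla^2\bar u\|_{L^2}.\]
For $\phi$ small the last term is absorbed into the left-hand side, and inserting the previously obtained bound on $\|\nabla\bar u\|_{L^2}$ produces the claim. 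Assumption \eqref{assump7} that $\rho\in W^{1,\infty}$ is exactly what is required so that $(\nabla\rho)\cdot e\bar u\in L^2$, and it is the sole source of the term $\phi\|\nabla\rho\|_\infty\|f\|_{L^{6/5}}$ in the final estimate.

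The main obstacle is that the previous step used $\nabla^2\bar u\in L^2$ a priori, which is what we want to conclude. I would rigorously justify this by a standard regularization: mollify $\rho$ to a sequence $\rho_\varepsilon\in C^\infty_c$ converging to $\rho$ in $W^{1,\infty}$, for which classical variable-coefficient Stokes theory yields solutions $\bar u_\varepsilon$ in $H^2$; the estimate above then holds uniformly in $\varepsilon$, and one passes to the limit by weak compactness together with lower semicontinuity of norms. A more elementary alternative is Nirenberg's difference-quotient technique applied directly to the weak formulation, showing that $\partial_h(\nabla\bar u)$ is bounded in $L^2$ uniformly in $h$; the only delicate point there is the selection of divergence-free test functions compatible with the incompressibility constraint.
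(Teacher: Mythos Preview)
Your proposal is correct, and the existence/first-order part matches the paper verbatim (Lax--Milgram on the symmetric form, then test with $\bar u$). For the $H^2$ estimate the paper takes what you list as your ``elementary alternative'': it tests the weak formulation, rewritten as
\[
\int_{\R^3}\nabla\bar u\cdot\nabla\varphi\dx{x}+\int_{\R^3}5\phi\rho\, e\bar u\cdot e\varphi\dx{x}=\int_{\R^3}(1-\phi\rho)f\cdot\varphi\dx{x},
\]
with $\varphi=-D^{-h}_kD^h_k\bar u$. Note that this test function is automatically divergence-free and in $\dHs$, so the ``delicate point'' you anticipate does not actually arise; the difference-quotient argument is entirely routine here and avoids the regularization layer needed for your primary absorption approach. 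Your absorption argument is also fine and gives the same bound, but the paper's route is shorter precisely because it never needs $\nabla^2\bar u\in L^2$ a priori.
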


\begin{proof}
 Consider the weak formulation 
 \begin{align}
 \int_{\R^3}\bra{2+5\phi \rho}e \bar{u}e\varphi\dx{x}=\int_{\R^3}(1-\phi \rho)f\varphi\dx{x},\label{eq:weak_baru}
 \end{align}
 where $\varphi\in \dHs$. Existence follows from Lax-Milgram theorem. For $\phi\le \norm{\rho}_{L^\infty}^{-1}$ we get the estimate for the gradient by setting $\varphi=\bar{u}$ and estimating the right hand side like
  \begin{align*}
 \int_{\R^3}(1-\phi \rho)f\varphi\le \norm{f}_{L^\frac 6 5}\norm{\bar u}_{L^6}\le \norm{f}_{L^\frac 6 5}\norm{\nabla \bar u}_{L^2}.
 \end{align*}
 The estimate on the second derivative is obtained by rewriting the weak formulation as
 \begin{align*}
  \int_{\R^3}\nabla \bar{u} \nabla \varphi\dx{x}+\int_{\R^3}5\phi \rho e \bar{u}e\varphi\dx{x}=\int_{\R^3}(1-\phi \rho)f\varphi\dx{x},
 \end{align*}
 and then testing with difference quotients $\varphi=-D^{-h}_k D^h_k \bar{u}$, where for any function $g$ $D^h_kg(x)=\frac 1 h \bra{g(x+he_k)-g(x)}$.
\end{proof}

\begin{lemma}\label{lemma:hu_bu_diff}
 There is a constant $C>0$ such that the following holds. The weak solutions in $\dHs$ to the equations 
 \begin{align}
  -\Div\bra{2e\hat{u}+5 \phi \rho e\hat{v}}+\nabla p=(1-\phi \rho)f,\\
  -\Div\bra{\bra{2+5 \phi \rho}e\bar{u}}+\nabla p=(1-\phi \rho)f,\label{eq:ubar}
 \end{align}
 differ on scale $\phi^2$, i.e. $\norm{\hat{u}-\bar{u}}_{L^\infty(\R^3)}\le C \phi^2$.
\end{lemma}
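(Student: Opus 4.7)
The plan is to set up the equation satisfied by $w \coloneqq \hat{u} - \bar{u}$ and show that its right hand side is of order $\phi^2$ in the compactly-supported $L^2$ sense required by Lemma \ref{lemma:reg_lemma}. Subtracting the equation for $\bar{u}$ from the one for $\hat{u}$, and writing $z \coloneqq \bar{u} - \hat{v}$, one obtains (after relabelling the pressure)
\begin{equation*}
 -\Delta w + \nabla p = -5\phi\, \Div\bra{\rho\, e z}, \qquad \Div w = 0,
\end{equation*}
where the identity $\Div(2ew)=\Delta w$ was used (because $\Div w=0$). Since $\rho$ is supported in $B_{L+1}(0)$ by assumption \eqref{assump1} (and the subsequent support property of $\rho^N$), the right hand side is compactly supported, and expanding the divergence using $\Div z = 0$ gives
\begin{equation*}
 \Div\bra{\rho\, e z} = \nabla \rho \cdot e z + \tfrac{1}{2}\rho\, \Delta z.
\end{equation*}
Thus the task reduces to estimating $\norm{e z}_{L^2}$ and $\norm{\Delta z}_{L^2}$ in terms of $\phi$, after which Lemma \ref{lemma:reg_lemma} will yield $\norm{w}_{L^\infty} \leq C\norm{-5\phi\Div(\rho ez)}_{L^2} \leq C\phi^2$.

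The next step is to derive the auxiliary equation for $z = \bar{u} - \hat{v}$. Subtracting the equation for $\hat{v}$ (which is \eqref{eq:hvS1}) from \eqref{eq:ubar} gives
\begin{equation*}
 -\Delta z + \nabla p_z = 5\phi\, \Div\bra{\rho\, e\bar{u}} = 5\phi\bra{\nabla \rho \cdot e\bar{u} + \tfrac{1}{2}\rho\,\Delta \bar{u}},\qquad \Div z=0.
\end{equation*}
The right hand side is compactly supported and lies in $L^2$: the first piece is controlled by $\norm{\nabla \rho}_{L^\infty}\norm{e\bar{u}}_{L^2}$, while the second piece is controlled using $\norm{\rho}_{L^\infty}\norm{\nabla^2 \bar{u}}_{L^2}$, and both $\norm{\nabla \bar{u}}_{L^2}$ and $\norm{\nabla^2 \bar{u}}_{L^2}$ are bounded independently of $\phi$ (for $\phi$ small) by the previous lemma. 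Standard Stokes regularity in $\R^3$ (applied to $z$) then yields $\norm{\nabla z}_{L^2} + \norm{\nabla^2 z}_{L^2} \leq C\phi$, and in particular $\norm{ez}_{L^2}+\norm{\Delta z}_{L^2}\leq C\phi$.

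Combining these bounds with $\rho \in W^{1,\infty}$ (assumption \eqref{assump7}) yields
\begin{equation*}
 \norm{-5\phi\,\Div(\rho\, e z)}_{L^2} \leq 5\phi\bra{\norm{\nabla \rho}_{L^\infty}\norm{ez}_{L^2} + \tfrac{1}{2}\norm{\rho}_{L^\infty}\norm{\Delta z}_{L^2}} \leq C\phi^2,
\end{equation*}
and since this right hand side is compactly supported in $B_{L+1}(0)$, Lemma \ref{lemma:reg_lemma} applied to the equation for $w$ gives $\norm{w}_{L^\infty(\R^3)} \leq C\phi^2$, which is the claim.

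The only delicate point is the $L^2$ control of $\Delta z$ (equivalently $\nabla^2 z$), which is exactly why assumption \eqref{assump7} together with the second-derivative estimate from the preceding lemma is needed; without the $W^{1,\infty}$ regularity of $\rho$, the right hand side of the $z$-equation would not be in $L^2$ and only an $H^{-1}$ bound would be available, yielding just $O(\phi)$ for $w$ in $L^\infty$ rather than $O(\phi^2)$. Once the regularity is in place, the rest is a straightforward chain of Stokes estimates.
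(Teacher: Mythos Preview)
Your argument is correct and follows essentially the same route as the paper: both derive the equation $-\Delta z+\nabla p=5\phi\,\Div(\rho\,e\bar u)$ for $z=\bar u-\hat v$, establish $\norm{\nabla z}_{L^2}+\norm{\nabla^2 z}_{L^2}\le C\phi$ (the paper by testing with $z$ and with $-\Delta z$, you by invoking Stokes regularity with an $L^2$ right hand side), and then apply Lemma~\ref{lemma:reg_lemma} to the equation for $w$ whose right hand side $-5\phi\,\Div(\rho\,e z)$ is compactly supported and $O(\phi^2)$ in $L^2$. The only cosmetic difference is that you expand $\Div(\rho\,ez)=\nabla\rho\cdot ez+\tfrac12\rho\,\Delta z$ explicitly, while the paper estimates $\norm{\Div(\rho(\nabla\bar u-\nabla\hat v))}_{L^2}$ directly via $\norm{\rho}_{W^{1,\infty}}\norm{\nabla z}_{H^1}$.
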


\begin{proof}
 By subtracting equation \eqref{eq:hvS1} from equation \eqref{eq:ubar} we obtain:
 \begin{align*}
  -\Div\bra{2e\bar u-2e \hat{v}+5\phi \rho e\bar u}+\nabla p=0.
 \end{align*}
 Hence, for the difference $\bar u -\hat v$, we get: 
 \begin{align}\label{laplace_u_v}
  -\Delta \bra{\bar u -\hat v}+\nabla p=\phi\Div\bra{5\rho e\bar u}.
 \end{align}
 Testing with $\bar u -\hat v$ gives
 \begin{align} \label{estimate_u_vS}
  \norm{\nabla \bar u -\nabla \hat v}_{L^2}\le 5\phi\norm{\rho}_{L^\infty}\norm{\nabla \hat u}_{L^2}\le C \phi \norm{\rho}_{L^\infty}\norm{f}_{L^\frac{6}{5}}.
 \end{align}
 On the other hand we know that $\nabla \bar u\in H^1$ and by the same argument $\nabla \hat v\in H^1$ so that we can test equation \eqref{laplace_u_v} by $-\Delta \bra{\bar u -\hat v}$ in order to obtain
 \begin{align*}
  \norm{\nabla^2 \bra{\bar u-\hat v}}_{L^2}^2&\le C \phi \norm{\nabla^2\bra{\bar u-\hat v}}_{L^2}\norm{\rho}_{W^{1,\infty}}\norm{\nabla \bar u}_{H^1},\\
  \norm{\nabla^2 \bra{\bar u-\hat v}}_{L^2}&\le C \phi \norm{\rho}_{W^{1,\infty}}\bra{1+\phi \norm{\rho}_{W^{1,\infty}}}\bra{\norm{f}_{L^2}+\norm{f}_{L^{\frac 6 5}}}.
 \end{align*}
 This proves that $\norm{\nabla\bra{\bar u-\hat v}}_{H^1}\le C \phi$. 
  Now we subtract the equations for $\bar u$ and $\hat u$ to obtain for the difference $w=\bar u -\hat u$:
  \begin{align*}
  -\Div\bra{\nabla w+5\phi \rho \bra{\nabla \bar u-\nabla \hat v}}+\nabla p=0.
 \end{align*}
 This means that
 \begin{align*}
  -\Delta w+\nabla p=\Div\bra{5\phi \rho \bra{\nabla \bar u-\nabla v}}.
 \end{align*}
 The right hand side is compactly supported in $B_{2L}(0)$ and in $L^2$. By Lemma \ref{lemma:reg_lemma} this means that
 \begin{align*}
  \norm{w}_{L^\infty}&\le C \phi \norm{\rho}_{W^{1,\infty}} \norm{\nabla \bar u-\nabla v}_{H^1}.\\
  &\le C \phi^2.
 \end{align*}
\end{proof}

\begin{proof}[Proof of Theorem \ref{thm:mainS}]
 The statement follows by combining Theorem \ref{thm:u_v1_diff}, Lemma \ref{lemma:v1_u_diff}, Lemma \ref{lemma:Lpdiff}, Lemma \ref{lemma:tu_hu_diffS} and Lemma \ref{lemma:hu_bu_diff}. Note that we do not need a separate $L^p$ statement in Theorem \ref{thm:u_v1_diff} and in \ref{lemma:hu_bu_diff} since we have control over the $L^\infty$ norm of the difference on the whole space.
\end{proof}

\section*{Aknowledgements}

We thank Juan Vel\'azquez for suggesting to look at the dipole approximations as well as for many fruitful discussions and Richard H\"ofer for his hint on how to use the method of reflections to its full potential.

The authors acknowledge support through the CRC 1060 The mathematics of emergent effects at the University of Bonn that is funded through the German Science Foundation (DFG).

\bibliographystyle{alpha}
\bibliography{Einstein}
\end{document}